\newtheorem{theorem}{Theorem}[section]
\newtheorem{lemma}[theorem]{Lemma}
\newtheorem{remark}[theorem]{Remark}
\def\xxint#1#2#3{{\setbox0=\hbox{$#1{#2#3}{\int}$}
  \vcenter{\hbox{$#2#3$}}\kern-.5\wd0}}
\def\varep{\varepsilon}
\newcommand{\average}{-\!\!\!\!\!\!\int}
\begin{document}

\title
{\bf Lipschitz Estimates in \\  Almost-Periodic Homogenization}

\author{Scott N. Armstrong and Zhongwei Shen\thanks{Supported in part by NSF grant DMS-1161154.}}
 
\date{ }

 \maketitle

\begin{abstract}

We establish uniform Lipschitz estimates 
for second-order elliptic systems in divergence form with rapidly oscillating, almost-periodic coefficients. We give interior estimates as well as estimates up to the boundary in bounded $C^{1,\alpha}$ domains with either Dirichlet or Neumann data. The main results extend those in the periodic setting due to Avellaneda and Lin \cite{AL-1987,AL-1991} for interior and Dirichlet boundary estimates and later Kenig, Lin, and Shen \cite{KLS1} for the Neumann boundary conditions. In contrast to these papers, our arguments are constructive (and thus the constants are in principle computable) and the results for the Neumann conditions are new even in the periodic setting, since we can treat non-symmetric coefficients. We also obtain uniform $W^{1,p}$ estimates.

\end{abstract}
\medskip

% \noindent{\it MSC2010:} \ \ 

% \medskip

\noindent{\it Keywords:} homogenization; almost-periodic coefficients; Lipschitz estimates.

\tableofcontents
\section{Introduction}

The primary purpose of this paper is to establish uniform Lipschitz estimates
for a family of elliptic operators with rapidly oscillating, almost-periodic coefficients,
arising in the theory of homogenization.
More precisely,  we consider the linear elliptic operator
\begin{equation}\label{operator}
\mathcal{L}_\varep
=-\text{\rm div} \big( A(x/\varep)\nabla \big)
=-\frac{\partial }{\partial x_i} \left\{ a_{ij}^{\alpha\beta} (x/\varep) \frac{\partial}{\partial x_j} \right\}, \qquad
\qquad \varep>0
\end{equation}
(the summation convention is used throughout).
Let $A(y)=\big( a_{ij}^{\alpha\beta} (y)\big)$ be real and bounded in $\mathbb{R}^d$, where 
$1\le i,j\le d$ and $1\le \alpha, \beta\le m$.
Throughout the paper we will assume that
\begin{equation}\label{ellipticity}
\mu |\xi|^2 \le a_{ij}^{\alpha\beta} (y)\xi_i^\alpha\xi_j^\beta \le \mu^{-1} |\xi |^2 \quad 
\text{ for any }  y\in \mathbb{R}^d \text{ and } \xi=(\xi_i^\alpha) \in \mathbb{R}^{m\times d},
\end{equation}
where $\mu>0$, and
\begin{equation}\label{uniform-ap}
\lim_{R\to \infty} \sup_{y\in \mathbb{R}^d} \inf_{\substack{ z\in \mathbb{R}^d\\ |z|\le R}}
\| A(\cdot +y)-A(\cdot +z)\|_{L^\infty(\mathbb{R}^d)} = 0.
\end{equation}
Notice that if $A$ is bounded and continuous in $\mathbb{R}^d$,
then $A$ satisfies (\ref{uniform-ap}) if and only if 
 $A$ is \emph{uniformly almost-periodic} in $\mathbb{R}^d$, i.e.,
 each entry of $A$ is the uniform limit of a sequence of trigonometric polynomials.
  We define the following modulus, which quantifies the almost periodic assumption:
\begin{equation}
\label{rho}
\rho (R):=\sup_{y\in \mathbb{R}^d} \inf_{\substack{ z\in \mathbb{R}^d\\ |z|\le R}}
\| A(\cdot +y)-A(\cdot +z)\|_{L^\infty(\mathbb{R}^d)}.
\end{equation}

Given a bounded $C^{1, \alpha}$ domain $\Omega\subset \mathbb{R}^d$, we are interested in estimating the quantity
 $\|\nabla u_\varep\|_{L^\infty(\Omega)}$, uniformly in $\varep>0$,
for weak solutions~$u^\varep$ of Dirichlet problem
\begin{equation}\label{DP-1}
\mathcal{L}_\varep (u_\varep) =F \quad \text{ in } \Omega \quad \text{ and } \quad u_\varep =f \quad 
\text{ on } \partial\Omega,
\end{equation}
as well as those of the Neumann problem
\begin{equation}\label{NP-1}
\mathcal{L}_\varep (u_\varep)=F \quad \text{ in } \Omega
\quad
\text{ and } \quad \frac{\partial u_\varep}{\partial\nu_\varep}
=g \quad \text{ on } \partial\Omega.
\end{equation}
In (\ref{NP-1}) we have used $\partial u_\varep/\partial \nu_\varep$
to denote the conormal derivative $n(x) A(x/\varep)\nabla u_\varep (x)$ on $\partial\Omega$,
where $n(x)$ is the outward unit normal to $\partial\Omega$.

To ensure that we have Lipschitz estimates at small scales, we assume that $A$ is uniformly 
H\"older continuous, i.e., there exist $\tau>0$ and $\lambda\in (0,1]$
such that
\begin{equation}\label{H-continuity}
|A(x)-A(y)|\le \tau |x-y|^\lambda \quad \text{ for any } x,y \in \mathbb{R}^d.
\end{equation}

The following are the main results of the paper.

\begin{theorem}\label{main-theorem-Lip}
Suppose that $A(y)$ satisfies  uniform ellipticity (\ref{ellipticity}) and H\"older continuity conditions (\ref{H-continuity}).
Suppose also that there exist $N>5/2$ and $C_0>0$ such that
\begin{equation}\label{decay-condition}
\rho (R) \le C_0 \big[ \log R\big]^{-N} \qquad \text{ for any } R\ge 2.
\end{equation}
Let $\Omega$ be a bounded $C^{1, \alpha}$ domain in $\mathbb{R}^d$
for some $\alpha>0$.
Let $u_\varep\in H^1(\Omega; \mathbb{R}^m)$ be a weak solution of Dirichlet problem
(\ref{DP-1}). Then
\begin{equation}\label{Lip-estimate-0}
\| \nabla u_\varep\|_{L^\infty(\Omega)}
\le C \left\{ \| F\|_{L^p(\Omega)} +\| f\|_{C^{1, \beta} (\partial\Omega)} \right\},
\end{equation}
where $p>d$, $\beta \in (0, \alpha)$, and $C$ depends only on $p$, $\beta$, 
$A$, and $\Omega$.
\end{theorem}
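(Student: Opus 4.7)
The plan is to prove \eqref{Lip-estimate-0} by a quantitative, constructive version of the Avellaneda--Lin compactness scheme, replacing the qualitative compactness (unavailable in the almost-periodic setting) with an explicit excess-decay iteration controlled by the modulus $\rho$. By standard localization and reflection/flattening of $\partial\Omega$, together with a routine $C^{1,\alpha}$-extension of the boundary data $f$, it suffices to show that for every $x_0 \in \overline{\Omega}$ and every $r \in [\varep, 1]$, a scale-invariant excess quantity such as
\begin{equation*}
H(r) \, := \, \inf_{M \in \mathbb{R}^{m\times d}, \, q\in \mathbb{R}^m} \frac{1}{r} \left( \dashint_{B_r(x_0) \cap \Omega} |u_\varep - q - M(x-x_0)|^2 \, dx \right)^{1/2}
\end{equation*}
(modified on boundary balls to respect the Dirichlet condition) satisfies $H(r) \leq C\{\|F\|_{L^p(\Omega)} + \|f\|_{C^{1,\beta}(\partial\Omega)}\}$ uniformly in $r$. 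A Campanato-type characterization then yields \eqref{Lip-estimate-0}.

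The workhorse is a one-step improvement: there exist $\theta\in(0,\tfrac{1}{4})$ and $C>0$ such that for every $r \in [\varep,1]$,
\begin{equation*}
H(\theta r) \, \leq \, \tfrac{1}{2} H(r) + C\bigl( r^{1-d/p}\|F\|_{L^p} + \|f\|_{C^{1,\beta}} r^\beta + E(\varep/r)\, (H(r)+\text{data})\bigr),
\end{equation*}
where $E(\varep/r)$ measures the homogenization error. To establish this, I would compare $u_\varep$ on $B_r$ to the solution $u_0$ of the homogenized, constant-coefficient system $\mathcal{L}_0 u_0 = F$ with matching boundary conditions; since $\mathcal{L}_0$ has $C^{1,\beta}$ regularity up to $C^{1,\alpha}$ boundaries, an affine fit to $u_0$ satisfies $H_0(\theta r) \leq C\theta^\beta H_0(r)$, and $\theta$ is chosen so that $C\theta^\beta \leq \tfrac{1}{4}$. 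The remaining task is to bound $\|u_\varep - u_0\|_{L^2(B_r\cap \Omega)}$ by the approximation via the first-order corrector $\chi$ (and on boundary balls, also a Dirichlet boundary corrector), in terms of a quantitative sublinear growth modulus for $\chi$.

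The main obstacle---and the place where assumption \eqref{decay-condition} enters with the sharp exponent $N>5/2$---is controlling this approximation error. In the almost-periodic setting the correctors exist only as sublinear solutions of the cell problem, with no pointwise bounds; the required quantitative sublinearity is a delicate consequence of $\rho$. One shows, presumably in earlier sections of the paper, a bound of the form $R^{-1}\|\chi\|_{L^\infty(B_R)} \leq \eta(R)$ with $\eta$ depending on $\rho$, and similarly for flux correctors and boundary correctors. The error term then takes the schematic form $E(\varep/r) \lesssim \eta(r/\varep)^{1/2}$ (or similar), and the precise power-of-logarithm decay of $\rho$ in \eqref{decay-condition} is chosen so that $\sum_k E(\varep/\theta^{-k})$ is summable over the dyadic scales between $\varep$ and $1$. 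Iterating the one-step improvement from scale $1$ down to $r=\varep$ yields $\sup_{r\geq \varep} H(r) \leq C(\|F\|_{L^p} + \|f\|_{C^{1,\beta}})$.

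Finally, for scales $r < \varep$, one rescales $\tilde{u}(y) = u_\varep(\varep y)$ to pass to a fixed (non-oscillating) operator with uniformly H\"older coefficients by \eqref{H-continuity}; the classical Schauder theory then supplies the $C^{1,\beta}$ estimate on the small scale. Combining the large-scale excess control with this small-scale Schauder estimate yields \eqref{Lip-estimate-0}. The difficulty is concentrated in the quantitative approximation step and the careful accounting of the error series; the boundary case, apart from the need for a Dirichlet boundary corrector and the standard flattening argument, follows the same template as the interior case.
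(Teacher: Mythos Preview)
Your overall architecture matches the paper's: a Campanato-type excess-decay iteration (Lemma~\ref{main-lemma-1} and Theorem~\ref{g-theorem-1}), with the one-step improvement obtained by comparing $u_\varep$ to a solution of the homogenized constant-coefficient system, and small-scale control supplied by classical Schauder theory after rescaling by $\varep$ using the H\"older continuity~\eqref{H-continuity}. So far, so good.

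The gap is in how you propose to control the homogenization error up to the boundary. You invoke the first-order corrector $\chi$ together with a Dirichlet boundary corrector, and appeal to quantitative sublinearity bounds for both. This is exactly the mechanism the paper identifies as the obstacle to extending the Avellaneda--Lin and Kenig--Lin--Shen proofs to the almost-periodic setting: estimates on boundary correctors (and even pointwise bounds on true correctors) are not available here. The paper's substitute is to work with the \emph{approximate} correctors $\chi_T$, which solve the massive equation~\eqref{ac} and admit the pointwise bounds of Theorem~\ref{ac-theorem}, and then to prove a direct $L^2$ convergence-rate estimate $\|u_\varep-u_0\|_{L^2(\Omega)}\le C[\omega(\varep)]^{2/3}\|f\|_{C^{1,\delta}(\partial\Omega)}$ (Theorem~\ref{rate-theorem-1}) that requires no boundary corrector at all. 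This rate feeds into the boundary approximation Lemma~\ref{b-L-lemma-2}, which produces a local homogenized comparison function $w$ with the \emph{same} Dirichlet data as $u_\varep$ on $\Delta_r$; the iteration (Theorem~\ref{b-L-theoem-3}) then runs with an error of order $[\omega(\varep/r)]^{2/3-\delta}$, and the Dini summability under~\eqref{decay-condition} with $N>5/2$ is checked in Lemma~\ref{Dini-lemma}. The general right-hand side $F$ is handled afterward via Green-function bounds. In short, your iteration skeleton is right, but the approximation step you sketch relies on objects whose estimates are precisely what is unavailable, and the paper's contribution is a route around them.
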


\begin{theorem}\label{main-theorem-Lip-N}
Suppose that $A(y)$ satisfies  (\ref{ellipticity}) and (\ref{H-continuity}).
Also assume that the decay condition (\ref{decay-condition}) holds for some  $N>3 $ and $C_0>0$.
 Let $\Omega$ be a bounded $C^{1, \alpha}$ domain in $\mathbb{R}^d$
for some $\alpha>0$.
Let $u_\varep\in H^1(\Omega; \mathbb{R}^m)$ be a weak solution of the Neumann problem
(\ref{NP-1}). Then
\begin{equation}\label{Lip-estimate-N-0}
\| \nabla u_\varep\|_{L^\infty(\Omega)}
\le C \left\{ \| F\|_{L^p(\Omega)} +\| g\|_{C^{\beta} (\partial\Omega)} \right\},
\end{equation}
where $p>d$, $\beta \in (0, \alpha)$, and $C$ depends only on $p$, $\beta$, 
$A$, and $\Omega$.
\end{theorem}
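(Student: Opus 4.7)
The plan is to adapt the Avellaneda--Lin compactness iteration to the almost-periodic Neumann setting, paralleling the Dirichlet proof of Theorem~\ref{main-theorem-Lip} but introducing an additional antisymmetric flux corrector to handle the oscillating conormal boundary condition. After a standard localization and $C^{1,\alpha}$ flattening of $\partial\Omega$, it suffices to establish a boundary Lipschitz estimate on a half-ball $B_r^+$ centered at a boundary point of $\Omega$, for solutions $u_\varepsilon$ of $\mathcal{L}_\varepsilon u_\varepsilon=F$ in $B_r^+$ with $\partial u_\varepsilon/\partial\nu_\varepsilon=g$ on the flat portion. I would introduce a scale-invariant excess
\[
H(r)=\inf_{M\in\mathbb{R}^{m\times d},\, q\in\mathbb{R}^m} r^{-1-d/2}\|u_\varepsilon-Mx-q\|_{L^2(B_r^+)}+r\|F\|_{L^p(B_r^+)}+\|g\|_{C^\beta(\partial\Omega\cap B_r)}
\]
and aim for an improvement-of-flatness inequality of the form $H(\theta r)\le \tfrac12 H(r)$ valid for $r\in[K\varepsilon,1]$ with some fixed $\theta\in(0,1)$ and $K$ depending on $\rho$.

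The heart of the argument is an approximation lemma: provided $\varepsilon/r$ is sufficiently small, there exists a solution $w$ of the \emph{homogenized} Neumann problem on $B_r^+$, associated with the constant-coefficient operator $\mathcal{L}_0=-\operatorname{div}(\widehat A\nabla)$ (for which classical $C^{1,\beta}$ regularity up to the flat portion of the boundary is available), such that
\[
r^{-1-d/2}\|u_\varepsilon-w\|_{L^2(B_r^+)}\le \eta(\varepsilon/r)\,H(r),
\]
with $\eta(t)\to 0$ as $t\to 0$ at a rate determined by $\rho$. To build the approximation I would use the two-scale ansatz $u_\varepsilon\approx w+\varepsilon\,\chi_T(x/\varepsilon)\,\nabla w$ with an approximate corrector $\chi_T$ at regularization scale $T\sim r/\varepsilon$. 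The Neumann condition forces use of the antisymmetric flux corrector $\phi_T$, so that the conormal defect $n\!\cdot\!A(x/\varepsilon)\nabla u_\varepsilon-n\!\cdot\!\widehat A\,\nabla w$ can be rewritten as a tangential divergence on the flat boundary and integrated by parts to contribute only a small surface error.

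Iterating the flatness inequality on the geometric sequence $r_k=\theta^k r_0$ yields $H(r_k)\le C\{\|F\|_{L^p(\Omega)}+\|g\|_{C^\beta(\partial\Omega)}\}$ for all $r_k\gtrsim K\varepsilon$, which immediately converts into a Lipschitz bound for $|\nabla u_\varepsilon|$ at every scale above $K\varepsilon$. To close the remaining window, I would rescale $\tilde u(y)=\varepsilon^{-1}u_\varepsilon(x_0+\varepsilon y)$: the rescaled $\tilde u$ solves a Neumann problem for $\mathcal{L}_1$ whose coefficients $A(\cdot)$ are H\"older continuous on the unit scale, and classical Schauder theory in the $C^{1,\alpha}$ domain provides the final bound on $|\nabla u_\varepsilon(x_0)|$.

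The main obstacle I expect is the quantitative $L^\infty$ control of the flux corrector $\phi_T$, which is needed on the boundary when the conormal defect is integrated by parts; this bound is essentially one derivative worse than what is needed for Theorem~\ref{main-theorem-Lip}, and its dependence on $\rho$ is correspondingly weaker. Proving that the telescoping error $\sum_k \eta(\varepsilon/r_k)$ remains summable under $\rho(R)\le C_0[\log R]^{-N}$ is precisely what forces the strict inequality $N>3$ here, as opposed to $N>5/2$ in the Dirichlet theorem. A secondary subtlety is that, since no symmetry of $A$ is assumed, the homogenized tensor $\widehat A$ is non-symmetric, so well-posedness and $C^{1,\beta}$ regularity for the constant-coefficient Neumann problem must be invoked in their non-symmetric form, and the construction of $\phi_T$ must be carried out for both $A$ and its transpose simultaneously.
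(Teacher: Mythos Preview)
Your overall architecture---localize, Campanato iteration on an excess $H(r)$, approximate by a solution $w$ of the homogenized Neumann problem, use $C^{1,\beta}$ regularity for $\mathcal{L}_0$ to get improvement of flatness, then blow up below scale $\varepsilon$ using H\"older continuity of $A$---matches the paper exactly. The difference lies entirely in how you propose to prove the approximation lemma, i.e., the quantitative Neumann convergence rate.

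You plan to introduce an antisymmetric flux corrector $\phi_T$ and rewrite the conormal defect as a tangential divergence on the boundary. The paper does \emph{not} do this. Instead it bootstraps: the interior Lipschitz estimate (already proved under the weaker hypothesis $N>5/2$) yields $\sup_T\|\nabla\chi_T\|_{L^\infty}\le C$ (Remark~\ref{ac-remark}). With this in hand, the paper's Lemma~\ref{NP-rate-lemma-1} handles the boundary term by a simple cutoff at distance $\delta$ from $\partial\Omega$: on the boundary layer the integrand is bounded (since $\|\Delta H\|_\infty\le C$ follows from $\|\nabla\chi_T\|_\infty\le C$), contributing $O(\delta^{1/2})$; on the interior piece integration by parts as in the Dirichlet case contributes $O(\Theta_\sigma(T)\delta^{-1/2})$. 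Optimizing $\delta\sim\Theta_\sigma(T)$ gives an error of order $[\Theta_\sigma(T)]^{1/2}$, and this square root is exactly why the Dini condition on $\eta(t)\sim[\log(1/t)]^{(1-N)/2}$ forces $N>3$ (Lemma~\ref{decay-rate-lemma-8}), not any property of a flux corrector.

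Your route via $\phi_T$ is plausible in principle---it is what one does in the periodic setting---but in the almost-periodic setting constructing $\phi_T$ with usable $L^\infty$ bounds is itself a nontrivial problem (essentially as hard as what you are trying to prove), and you correctly flag it as the main obstacle. The paper sidesteps it entirely: no flux corrector is needed, only the scalar potential $H$ of~(\ref{H}) together with the a priori bound on $\nabla\chi_T$ coming from the interior theory. This bootstrap is the key idea you are missing.
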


Note that if $A(y)$ is periodic, then $\rho (R)=0$ for $R$ sufficiently large and thus
satisfies the assumption (\ref{decay-condition}) for any $N>1$.
In this case the Lipschitz estimate (\ref{Lip-estimate-0}) for the Dirichlet problem (\ref{DP-1}) 
in $C^{1, \alpha}$ domains
was established by Avellaneda and Lin \cite{AL-1987}
under the conditions (\ref{ellipticity}) and (\ref{H-continuity}).
This classical result was recently extended by Kenig, Lin, and Shen in \cite{KLS1}, where estimate
(\ref{Lip-estimate-N-0}) was established for solutions of the Neumann problem (\ref{NP-1}) in the periodic setting,
under an additional symmetry condition $A^*(y)=A(y)$, i.e., 
 $a_{ij}^{\alpha\beta}(y)=a_{ji}^{\beta\alpha} (y)$ for any
 $1\le i, j\le d$ and $1\le \alpha, \beta\le m$.
Our Theorems \ref{main-theorem-Lip} and \ref{main-theorem-Lip-N}
further extend the main results in \cite{AL-1987} and \cite{KLS1} to the almost-periodic
setting. We point out that Theorem \ref{main-theorem-Lip-N} is new even in the periodic
setting, as the symmetry condition $A^*=A$ is not required.
We also remark that the Lipschitz estimates in Theorems \ref{main-theorem-Lip} and 
\ref{main-theorem-Lip-N} are sharp in the sense that there is no uniform modulus of continuity for the gradient of solutions unless $\text{\rm div}(A)=0$. As for the $C^{1,\alpha}$ assumption on the domain $\Omega$, we note that
Lipschitz estimates may fail on a $C^1$ domain even for harmonic functions.

The proof of uniform estimates in both \cite{AL-1987} and \cite{KLS1} is based on a compactness argument
that originated from the study of regularity theory in the calculus of variation and minimal surfaces.
The argument, which was introduced in \cite{AL-1987} to the study of homogenization,
extends readily to the almost-periodic setting in the case of uniform H\"older estimates.
In fact it was proved in \cite{Shen-2014} that if $u_\varep$ is a weak solution of the Dirichlet problem:
\begin{equation}\label{DP-2}
\mathcal{L}_\varep (u_\varep) =F +\text{\rm div} (h) \quad
\text{ in } \Omega \quad \text{ and } \quad
u_\varep =f \quad \text{ on } \partial\Omega,
\end{equation}
where $\Omega$ is a bounded $C^{1,\alpha}$ domain in $\mathbb{R}^d$,
then
\begin{equation}\label{H-estimate}
\aligned
\| u_\varep\|_{C^\beta (\overline{\Omega})}
\le C \bigg\{ \| f\|_{C^\beta (\partial\Omega)}
 &+\sup_{\substack{x\in \Omega\\ 0<r<r_0}} r^{2-\beta} \average_{B(x,r)\cap \Omega} |F|\\
 &+\sup_{\substack{x\in \Omega\\ 0<r<r_0}}
 r^{1-\beta}
 \left(\average_{B(x,r)\cap \Omega} |h|^2\right)^{1/2}\bigg\}
 \endaligned
 \end{equation}
 for any $\beta \in (0,1)$,
 where $r_0=\text{\rm diam} (\Omega)$ and $C$ depends only on $\beta $, $A$, and $\Omega$.
 However, for Lipschitz estimates,
 the approach in \cite{AL-1987, KLS1} relies  on the Lipschitz estimates
 for interior and boundary correctors in a crucial way.
 It is not clear how to extend this to the almost-periodic setting,
 as any estimate of correctors in a non-periodic setting is far from trivial,
 even in the interior case.

Our proof of Theorem \ref{main-theorem-Lip} and \ref{main-theorem-Lip-N}
will be based on a rather general scheme for proving Lipschitz estimates at large scale in homogenization.
The scheme, which was motivated by the compactness argument in \cite{AL-1987},
was recently formulated  and used by the first author and C. Smart in \cite{Armstrong-Smart-2014}
for convex integral functionals with random coefficients. The idea, rather than arguing by contradiction (by compactness), is to apply a $C^{1,\alpha}$ Campanato iteration directly. For this we need to show that the ``flatness" of a solution $u$ (how well it is approximated by an affine function) improves on smaller scales, e.g., for some $\theta\in(0,1/4)$,
\begin{equation}\label{flatness}
\aligned
\frac{1}{r\theta } 
& \inf_{\substack{ M\in \mathbb{R}^{m\times d}\\ 
q\in \mathbb{R}^m}} \left(\average_{B_{\theta r}} |u(x)-M x-q|^2\, dx \right)^{1/2}\\
&\qquad\qquad \le \frac{1}{2} \left( \frac{1}{r} 
\inf_{\substack{ M\in \mathbb{R}^{m\times d}\\ 
q\in \mathbb{R}^m}}
  \left(\average_{B_{r}} |u(x)- M x-q|^2 \, dx \right)^{1/2} \right).
 \endaligned
\end{equation}
Since solutions of the \emph{homogenized} equation satisfy such an estimate (on all scales), we indeed have~(\ref{flatness}) up to the error arising in homogenization. For large balls, we may expect this error to be much smaller than the improvement in the flatness. Therefore, if we can control  the error in homogenization effectively, we may hope to iterate the improvement of flatness estimate down to microscopic scales. Indeed, as we show in Theorem~\ref{g-theorem-1} (which is a slight modification of~\cite[Lemma 5.1]{Armstrong-Smart-2014}), this scheme yields a uniform Lipschitz estimate down to the microscopic scale, provided that the rate of homogenization is sufficiently fast: an algebraic (or even Dini-type) convergence rate suffices.

Such error estimates were recently proved by the second author \cite{Shen-2014} for solutions $u_\varep$ of Dirichlet problem (\ref{DP-1}). In particular, it was shown that
\begin{equation}\label{error-estimate-0}
\| u_\varep -u_0\|_{L^2(\Omega)} \le C\, \omega (\varep) \| u_0\|_{W^{2, p} (\Omega)},
\end{equation}
where $p>d$ and $\omega (\varep)$ is a modulus on $(0,1]$, with $\omega (0+)=0$,  
which can be given explicitly using the modulus $\rho (R)$ in (\ref{rho}) (see Section 2). The decay conditions on $\rho(R)$ in Theorems~\ref{main-theorem-Lip}
and~\ref{main-theorem-Lip-N} are used precisely to ensure that we have Dini-type rates for homogenization.
We mention that condition (\ref{decay-condition}) holds, for example, if $A(y)$ is quasi-periodic in
$\mathbb{R}^d$ with frequencies satisfying the so-called Kozlov (C) condition \cite{Kozlov-1979}.
In fact it was proved in \cite{Shen-2014} that the Kozlov (C) condition implies that
$\rho(R)\le C_0 (R+1)^{-\lambda}$ for some $\lambda>0$.

We do not know if the decay assumptions on $\rho(R)$ in Theorems~\ref{main-theorem-Lip}
and~\ref{main-theorem-Lip-N} can be weakened  substantially or whether uniform Lipschitz estimates hold for general uniformly almost periodic coefficients. However, we remark that the  scheme for proving Lipschitz estimates formalized in Theorem~\ref{g-theorem-1} is a quite general tool that can be useful in other circumstances. It applies, for example, to the Poisson equation
\begin{equation}\label{e.lapf}
-\Delta u = \text{\rm div} f
\end{equation}
and yields a Lipschitz estimate on $u$ precisely in the case that $f$ is $C^\alpha$ (or Dini continuous). Likewise, a straightforward modification of Theorem~\ref{g-theorem-1} yields a statement that implies the classical Schauder estimates. Of course, if $f$ is merely continuous, then it is well-known that  solutions of~\eqref{e.lapf} may fail to be Lipschitz continuous. This suggests that, in analogy, the decay conditions on $\rho(R)$ are natural and perhaps even necessary. 

The general scheme mentioned above makes minimal use of the structure of the equation and in particular does not involve correctors in a direct manner
(though indirectly via approximation requirements).
As a result, it can be adapted surprisingly well
 for proving Lipschitz estimates up to the boundary with either Dirichlet or Neumann boundary conditions.
The key step then is to establish suitable error estimates of $\| u_\varep -u_0\|_{L^2(\Omega)}$,
not necessarily sharp, for local weak solutions with Dirichlet or Neumann condition.
This will be achieved by considering the function
$$
w_\varep =u_\varep (x)- v_0 (x) -\varep \chi_T (x/\varep) \nabla v_0 (x),
$$
where $T=\varep^{-1}$, $\mathcal{L}_0 (v_0) =0$, 
and $\chi_T(y)$ denotes the approximate correctors for $\mathcal{L}_\varep$.
The proof  relies on the pointwise estimates of $\chi_T$ obtained in \cite{Shen-2014}.
 In the case of Neumann conditions our argument also
 requires uniform Lipschitz estimates of $\chi_T$, which follow from uniform interior Lipschitz 
 estimates. However, as we indicated earlier, our approach does not use boundary
 correctors.
%We refer the reader to Section 2 for details.

Let $G_\varep (x,y)$ denote the matrix of Green functions for $\mathcal{L}_\varep$ in $\Omega$, with
pole at $y$. It follows from the proof of Theorem \ref{main-theorem-Lip} that for any $x,y\in \Omega$ and
$x\neq y$,
\begin{equation}\label{G-estimate-1}
|\nabla_x G_\varep (x,y) |+|\nabla_y G_\varep (x,y)|\le C\, |x-y|^{1-d}
\end{equation}
and
\begin{equation}\label{G-estimate-2}
|\nabla_x \nabla_y G_\varep (x,y)|\le C \, |x-y|^{-d},
\end{equation}
where $C$ depends only on $A$ and $\Omega$.
This, in particular, implies that the Poisson kernel $P_\varep (x,y)$ for
$\mathcal{L}_\varep$ in $\Omega$ satisfies 
\begin{equation}\label{P-estimate}
|P_\varep (x,y)|\le \frac{C\, \text{\rm dist} (x, \partial\Omega)}{|x-y|^d}
\end{equation}
for any $x\in \Omega$ and $y\in \partial\Omega$.
As in the periodic setting \cite{AL-1987-ho, AL-1987},
estimate (\ref{P-estimate}) yields the following.

\begin{theorem}\label{main-theorem-max}
Suppose that $A$ and $\Omega$ satisfy the same conditions as in Theorem \ref{main-theorem-Lip}.
Let $1<p<\infty$.
Let $u_\varep$ be the solution of the $L^p$ Dirichlet problem
\begin{equation}\label{DP-3}
\mathcal{L}_\varep (u_\varep)=0 \quad \text{ in } \Omega \quad \text{ and } \quad
u_\varep =f \quad \text{ on } \partial\Omega
\end{equation}
with $(u_\varep)^*\in L^p(\partial\Omega)$,
where $f\in L^p(\partial\Omega; \mathbb{R}^m)$ and $(u_\varep)^*$ denotes the non-tangential 
maximal function of $u_\varep$.
Then
\begin{equation}\label{max-estimate}
\| (u_\varep)^*\|_{L^p(\partial\Omega)} \le C_p \, \| f\|_{L^p(\partial\Omega)},
\end{equation}
where $C_p$ depends only on $p$, $A$, and $\Omega$.
Furthermore, if $f\in L^\infty(\partial\Omega)$, then
\begin{equation}\label{m-p}
\| u_\varep \|_{L^\infty(\Omega)} \le C\, \| f\|_{L^\infty(\partial\Omega)},
\end{equation}
where $C$ depends only on  $A$ and $\Omega$.
\end{theorem}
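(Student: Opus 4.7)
The plan is to deduce both (\ref{max-estimate}) and (\ref{m-p}) from the pointwise Poisson kernel bound (\ref{P-estimate}) by the now-classical scheme of dominating the nontangential maximal function by the Hardy--Littlewood maximal operator on $\partial\Omega$. Setting $\delta(x)=\text{\rm dist}(x,\partial\Omega)$, the representation
$$u_\varep(x)=\int_{\partial\Omega} P_\varep(x,y)\, f(y)\, d\sigma(y),\qquad x\in\Omega,$$
is valid by the Green function bounds (\ref{G-estimate-1})--(\ref{G-estimate-2}) together with a density argument starting from continuous data, so everything reduces to kernel computations together with the $C^{1,\alpha}$ structure of $\partial\Omega$, under which $\sigma$ is Ahlfors $(d-1)$-regular.

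For the $L^\infty$ bound (\ref{m-p}), fix $x\in\Omega$, choose $x_0\in\partial\Omega$ with $|x-x_0|=\delta(x)$, and split $\partial\Omega$ into the cap $\Delta_0=\{y\in\partial\Omega:|y-x_0|\le 2\delta(x)\}$ and the dyadic annuli $\Delta_k=\{y\in\partial\Omega:2^k\delta(x)<|y-x_0|\le 2^{k+1}\delta(x)\}$, $k\ge 1$. Since $\sigma(\Delta_k)\ls (2^k\delta(x))^{d-1}$, the estimate (\ref{P-estimate}) gives
$$\int_{\Delta_k}|P_\varep(x,y)|\,d\sigma(y)\ls \frac{\delta(x)}{(2^k\delta(x))^d}(2^k\delta(x))^{d-1}=2^{-k}.$$
Summing over $k\ge 0$ yields $\int_{\partial\Omega}|P_\varep(x,y)|\,d\sigma(y)\ls 1$ uniformly in $x\in\Omega$, from which (\ref{m-p}) follows by H\"older's inequality on the representation formula.

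For (\ref{max-estimate}), fix $x_0\in\partial\Omega$ and take $x$ in a nontangential cone $\Gamma(x_0)$ with vertex $x_0$ of fixed aperture, so $|x-x_0|\le C\delta(x)$. Using the same dyadic decomposition centered at $x_0$ and at scale $\delta(x)$, the kernel bound (\ref{P-estimate}) produces
$$|u_\varep(x)|\ls \sum_{k\ge 0}\frac{\delta(x)}{(2^k\delta(x))^d}\int_{\Delta_k}|f|\,d\sigma \ls \sum_{k\ge 0}2^{-k}\,\frac{1}{\sigma(\Delta_k)}\int_{\Delta_k}|f|\,d\sigma \ls M_{\partial\Omega}(f)(x_0),$$
where $M_{\partial\Omega}$ denotes the centered Hardy--Littlewood maximal operator on $(\partial\Omega,d\sigma)$. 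Taking the supremum over $x\in\Gamma(x_0)$ gives the pointwise bound $(u_\varep)^*(x_0)\ls M_{\partial\Omega}(f)(x_0)$, and the $L^p$ boundedness of $M_{\partial\Omega}$ for $1<p<\infty$ (valid because $(\partial\Omega,\sigma)$ is a space of homogeneous type) immediately yields (\ref{max-estimate}).

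The maximal-function reduction above is entirely standard; the genuine obstacle in proving Theorem \ref{main-theorem-max} lies in establishing the Poisson kernel bound (\ref{P-estimate}) itself. Given the Green function gradient estimates (\ref{G-estimate-1})--(\ref{G-estimate-2}) flowing from Theorem \ref{main-theorem-Lip}, one writes $P_\varep$ as the (outward) conormal derivative of $G_\varep$ on $\partial\Omega$; the bound $|\nabla_y G_\varep(x,y)|\ls|x-y|^{1-d}$ alone is only of order $|x-y|^{1-d}$, and the extra factor $\delta(x)/|x-y|$ must be recovered from the fact that $G_\varep(x,\cdot)$ vanishes on $\partial\Omega\setminus\{x\}$ via a boundary H\"older quotient argument together with the mixed second-derivative estimate (\ref{G-estimate-2}). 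Once this Poisson kernel bound is in place, the remainder of the proof proceeds exactly as outlined.
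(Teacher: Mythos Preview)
Your proof is correct and follows essentially the same approach as the paper: both deduce (\ref{max-estimate}) and (\ref{m-p}) from the Poisson kernel bound (\ref{P-estimate}) via the pointwise domination $(u_\varep)^*(Q)\le C\,\mathcal{M}_{\partial\Omega}(f)(Q)$ and the $L^p$ boundedness of the Hardy--Littlewood maximal operator on $\partial\Omega$. The paper simply invokes this reduction as ``well known,'' whereas you spell out the dyadic annulus computation and add a remark on how (\ref{P-estimate}) is obtained from (\ref{G-estimate-1})--(\ref{G-estimate-2}).
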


In this paper we also study the uniform $W^{1,p}$ estimates for $\mathcal{L}_\varep$.
Related results in the periodic setting may be found in \cite{ AL-1987, AL-1991,Shen-2008, song-2012, KLS1, Geng-Shen-2014}.
We emphasize that the H\"older condition (\ref{H-continuity}) is not assumed in the following two
theorems.

\begin{theorem}\label{main-theorem-W-1-p}
Suppose that $A(y)$ is uniformly almost-periodic in $\mathbb{R}^d$ and satisfies (\ref{ellipticity}).
Also assume that $A(y)$ satisfies the condition (\ref{decay-condition}) for some $N>(3/2)$.
Let $\Omega$ be a bounded $C^{1, \alpha}$ domain in $\mathbb{R}^d$ for some $\alpha>0$ and $1<p<\infty$.
Let $u_\varep \in W^{1,p}(\Omega; \mathbb{R}^m)$ be a weak solution of the Dirichlet problem 
(\ref{DP-2}),
where $h=(h_i^\alpha)\in L^p(\Omega; \mathbb{R}^{m\times d})$, $F\in L^p(\Omega; \mathbb{R}^m)$ and 
$f\in B^{p, 1-\frac{1}{p}} (\partial\Omega; \mathbb{R}^m)$.
Then
\begin{equation}\label{W-1-p}
\| u_\varep\|_{W^{1, p}(\Omega)}
\le C_p \left\{ \| h\|_{L^p(\Omega)} +\| F\|_{L^p(\Omega)} + \| f\|_{B^{p, 1-\frac{1}{p}} (\partial\Omega)}\right\},
\end{equation}
where $C_p$ depends only on $p$, $A$, and $\Omega$.
\end{theorem}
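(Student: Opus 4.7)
The plan is to use Shen's real-variable framework for $W^{1,p}$ estimates, with the $L^2$ homogenization error estimate (\ref{error-estimate-0}) as the only quantitative input; no Lipschitz bound on correctors is required, which is why the weaker hypothesis $N > 3/2$ suffices. First, by subtracting auxiliary functions (an extension of $f$ realizing it as a Dirichlet trace, and the standard splitting into a pure-$F$ and a pure-$h$ right-hand side), it is enough to treat the case $F=0$, $f=0$ and prove $\|\nabla u_\varep\|_{L^p(\Omega)} \le C\|h\|_{L^p(\Omega)}$. Since the adjoint operator $\mathcal{L}_\varep^*$ satisfies the same structural hypotheses (\ref{ellipticity}), (\ref{uniform-ap}), (\ref{decay-condition}), duality reduces matters to $p>2$; the range $2<p<p_0$ is then handled by Meyers's self-improvement lemma, whose exponent $p_0>2$ depends only on $d$, $m$, $\mu$, and $\Omega$. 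For $p \ge p_0$ I would invoke the real-variable criterion: it suffices to exhibit, for some $q>p$, a weak reverse H\"older inequality
\begin{equation*}
\left( \average_{\Omega \cap B} |\nabla u_\varep|^q \right)^{1/q} \le C \left( \average_{\Omega \cap 2B} |\nabla u_\varep|^2 \right)^{1/2}
\end{equation*}
for every ball $B=B(x_0,r)$ with either $4B \subset \Omega$ or $x_0 \in \partial\Omega$, and every weak solution $u_\varep$ of $\mathcal{L}_\varep u_\varep = 0$ in $\Omega \cap 4B$ with $u_\varep = 0$ on $\partial\Omega \cap 4B$.

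I would verify this in two regimes. When $r \le C\varep$, rescaling eliminates the oscillation and the inequality reduces to Meyers's estimate on the unit scale. When $r > C\varep$, let $v_0$ solve the homogenized Dirichlet problem $\mathcal{L}_0 v_0 = 0$ in $\Omega \cap 2B$ with $v_0 = u_\varep$ on $\partial(\Omega \cap 2B)$. Since $\mathcal{L}_0$ has constant coefficients and $\Omega$ is $C^{1,\alpha}$, classical theory yields the analogous reverse H\"older inequality for $v_0$ for every $q<\infty$. For the remainder $w_\varep = u_\varep - v_0$, which satisfies $\mathcal{L}_\varep w_\varep = \text{\rm div}((A_0 - A(\cdot/\varep))\nabla v_0)$ with $w_\varep = 0$ on $\partial(\Omega \cap 2B)$, I would use the first-order two-scale expansion $u_\varep - v_0 - \varep\,\chi_T(x/\varep)\nabla v_0$ with $T \sim r/\varep$, combined with the pointwise bounds on the approximate correctors from Section~2, to prove
\begin{equation*}
\left( \average_{\Omega \cap 2B}|\nabla w_\varep|^2 \right)^{1/2} \le C\,\omega(\varep/r) \left( \average_{\Omega \cap 4B}|\nabla u_\varep|^2 \right)^{1/2},
\end{equation*}
where $\omega$ is a modulus determined by $\rho$. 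Combining the two controls via the triangle inequality yields the required reverse H\"older inequality.

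The main obstacle is this localized approximation estimate for $w_\varep$. Unlike (\ref{error-estimate-0}), which is global and presumes $v_0 \in W^{2,p}(\Omega)$, here $v_0$ is only a local solution with a rough boundary trace on $\partial(\Omega \cap 2B)$, and the comparison must be carried out on that smaller domain. To achieve this I would localize the corrector-based argument using interior cut-offs near $\partial(2B)$, invoking energy estimates together with the pointwise bounds on $\chi_T$; in the boundary case I would further exploit the $C^{1,\alpha}$ hypothesis on $\Omega$ and the fact that $v_0$ vanishes on $\partial\Omega \cap 4B$, so that no boundary corrector is needed. Tracking the dependence of $\omega$ on $\rho$ as described in Section~2 shows that the decay rate $\rho(R) \le C_0(\log R)^{-N}$ with $N>3/2$ produces a modulus $\omega$ summable across dyadic scales, which is precisely the quantitative input needed for the real-variable criterion to close uniformly in $\varep$.
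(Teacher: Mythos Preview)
Your reduction to a reverse H\"older inequality for local solutions of $\mathcal{L}_\varep u_\varep=0$ (with zero boundary data in the boundary case), together with duality for $p<2$, matches the paper's framework. The genuine gap is the claimed estimate
\[
\left(\average_{\Omega\cap 2B}|\nabla w_\varep|^2\right)^{1/2}\le C\,\omega(\varep/r)\left(\average_{\Omega\cap 4B}|\nabla u_\varep|^2\right)^{1/2},\qquad w_\varep=u_\varep-v_0.
\]
This is false: $\nabla u_\varep$ oscillates at scale $\varep$ while $\nabla v_0$ does not, so $\|\nabla w_\varep\|_{L^2}$ is of the same order as $\|\nabla u_\varep\|_{L^2}$, not $o(1)$. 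The two-scale expansion you invoke makes this explicit: it says $\nabla u_\varep-\nabla v_0\approx (\nabla\chi_T)(x/\varep)\nabla v_0$, and the term $(\nabla\chi_T)(x/\varep)\nabla v_0$ is $O(1)$ in $L^2$. What \emph{is} small in $H^1$ is $u_\varep-v_0-\varep\chi_T(x/\varep)\nabla v_0$, but then the ``good'' part would have to include $(\nabla\chi_T)\nabla v_0$, and you have no $L^q$ control on $\nabla\chi_T$ for $q$ large without the interior Lipschitz estimate on correctors---precisely what you said you wished to avoid. A single approximation at scale $r$ therefore cannot produce the reverse H\"older inequality; summability of $\omega$ across scales does not enter your argument as written.

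The paper proceeds differently. For the interior inequality (Lemma~\ref{interior-W-1-p-lemma}) it iterates the $L^2$ \emph{function-value} approximation (not the gradient) across all dyadic scales from $r$ down to $\varep$ via Theorem~\ref{g-theorem-1}, obtaining the large-scale bound
\[
\frac{1}{\varep}\inf_{q}\left(\average_{B(0,2\varep)}|u_\varep-q|^2\right)^{1/2}\le C\left(\average_{B(0,1)}|u_\varep|^2\right)^{1/2},
\]
which requires no H\"older continuity of $A$. At scale $\varep$ one then applies the classical $W^{1,p}$ estimate for operators with continuous coefficients (after rescaling), and a covering argument yields~(\ref{interior-W-1-p}). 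The boundary reverse H\"older inequality (Lemma~\ref{b-W-lemma-1}) is obtained by yet another mechanism: one combines the interior $W^{1,p}$ bound on balls $B(y,\delta(y)/8)$ with the uniform boundary H\"older estimate (Lemma~\ref{b-H-lemma}, proved by compactness with no quantitative rate needed) and integrates in $y$, choosing the H\"older exponent $\beta>1-1/p$ so that $[\delta(y)]^{(\beta-1)p}$ is integrable. No localized convergence-rate estimate near the boundary is used.
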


\begin{theorem}\label{main-theorem-W-1-p-N}
Suppose that $A$ and $\Omega$ satisfy the same conditions as in Theorem \ref{main-theorem-W-1-p}.
Let  $1<p<\infty$.
Let $u_\varep\in W^{1,p}(\Omega; \mathbb{R}^m)$ be a weak solution to
\begin{equation}\label{NP-2}
\mathcal{L}_\varep (u_\varep)= \text{\rm div} (h) +F \quad \text{ in } \Omega \quad \text{ and }\quad 
\frac{\partial u_\varep}{\partial \nu_\varep} =g - n\cdot h \quad \text{ on } \partial\Omega.
\end{equation}
Then
\begin{equation}\label{W-1-p-N}
\| \nabla u_\varep\|_{L^p(\Omega)}
\le C_p \left\{ \| h\|_{L^p(\Omega)}
+\| F\|_{L^p(\Omega)} + \| g\|_{B^{-\frac{1}{p},p}(\partial\Omega)} \right\},
\end{equation}
where $C_p$ depends only on $p$, $A$, and $\Omega$.
\end{theorem}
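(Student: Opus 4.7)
The plan is to prove Theorem~\ref{main-theorem-W-1-p-N} by the same real-variable scheme that underlies the Dirichlet result in Theorem~\ref{main-theorem-W-1-p}, adapted to the Neumann setting. A standard duality argument, using the fact that the adjoint operator $\mathcal{L}_\varep^* = -\text{\rm div}(A^*(x/\varep)\nabla)$ satisfies the same ellipticity and almost-periodicity assumptions and that the weak formulation of the Neumann problem is self-dual, reduces the proof to establishing \eqref{W-1-p-N} for $p>2$. By a standard density and approximation argument, we may further assume $F=0$, $g=0$ in the interior estimate, and $h\in C_c^\infty$, treating the full right-hand side by linearity and duality between negative Besov spaces and Neumann traces.

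With these reductions, I would invoke the real-variable criterion of Shen (see \cite{Shen-2008}): to prove $\|\nabla u_\varep\|_{L^p(\Omega)} \lesssim \|h\|_{L^p(\Omega)}$, it suffices to show that for some fixed $q>p$ and for every ball $B=B(x_0,r)$ with either $4B\subset\Omega$ or $x_0\in\partial\Omega$ and $\text{diam}(\Omega)\gtrsim r$, any weak solution $v_\varep$ of the homogeneous Neumann problem
\begin{equation*}
\mathcal{L}_\varep(v_\varep)=0 \text{ in } 4B\cap\Omega, \qquad \frac{\partial v_\varep}{\partial\nu_\varep}=0 \text{ on } 4B\cap\partial\Omega,
\end{equation*}
satisfies a reverse-H\"older type bound
\begin{equation*}
\left(\average_{B\cap\Omega}|\nabla v_\varep|^q\right)^{1/q}
\le C\left(\average_{4B\cap\Omega}|\nabla v_\varep|^2\right)^{1/2}.
\end{equation*}
For $r\le\varep$, this is the classical Meyers estimate applied to $\mathcal{L}_\varep$, with $q=2+\delta$ depending only on $\mu$. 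For $r\ge\varep$, the idea is to compare $v_\varep$ with a solution $v_0$ of the homogenized Neumann problem on $3B\cap\Omega$ and exploit $C^{1,\eta}$ regularity of $v_0$ (which holds since $\mathcal{L}_0$ has constant coefficients and $\Omega$ is $C^{1,\alpha}$); then $\nabla v_0\in L^\infty$ locally, which trivially gives the reverse H\"older inequality for $v_0$, and the error $\nabla(v_\varep-v_0)$ is controlled in $L^2$ by a homogenization rate $\omega(\varep/r)$ times the $L^2$ norm of $\nabla v_\varep$.

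The key ingredient — and the main obstacle — is thus a quantitative $H^1$-type error estimate for the Neumann problem in the almost-periodic setting. I would obtain it by the method sketched at the end of the introduction: set $T=\varep^{-1}$ and consider the two-scale correction
\begin{equation*}
w_\varep(x) = v_\varep(x)-v_0(x)-\varep\chi_T(x/\varep)\nabla v_0(x),
\end{equation*}
where $\chi_T$ denotes the approximate correctors constructed in \cite{Shen-2014}. Substituting into the equation, the bulk error is controlled using the pointwise bounds on $\chi_T$ and the flux correctors, together with the uniform interior Lipschitz estimate for $\chi_T$ that follows from Theorem~\ref{main-theorem-Lip} (note that the Hölder hypothesis \eqref{H-continuity} is not needed for the present theorem because $\chi_T$ is a specific auxiliary object to which \cite{Shen-2014} applies regardless). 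The boundary terms arising from the conormal derivative are handled using the fact that $\partial v_0/\partial\nu_0$ equals the averaged Neumann trace and exploiting the cutoff of the corrector contribution near $\partial\Omega$; this boundary layer is the main technical obstacle and is precisely where the stronger decay $N>3/2$ enters (as opposed to the larger exponents needed in the pointwise Lipschitz estimates). Once the rate $\|v_\varep-v_0\|_{H^1}\lesssim\omega(\varep/r)\|\nabla v_\varep\|_{L^2(4B\cap\Omega)}$ is in hand, with $\omega(t)\to 0$ as $t\to 0$, the real-variable criterion yields \eqref{W-1-p-N} for every $p$ in a range $(2,q)$, and an interpolation/iteration argument extends it to all $p<\infty$.
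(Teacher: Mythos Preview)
Your overall framework --- reduce to $p>2$ by duality, then verify the real-variable criterion by proving a reverse H\"older inequality for local solutions with homogeneous Neumann data --- matches the paper's. The divergence is in how you obtain that reverse H\"older estimate, and there your argument has a genuine gap.

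You propose to compare the local homogeneous solution $v_\varep$ with a homogenized solution $v_0$ via a quantitative Neumann error estimate, invoking the uniform bound $\sup_{T\ge1}\|\nabla\chi_T\|_{L^\infty}<\infty$. But in the paper that bound is obtained only through the interior Lipschitz estimate (Remark~\ref{ac-remark}), which requires the H\"older continuity hypothesis~\eqref{H-continuity} and $N>5/2$; neither is assumed in Theorem~\ref{main-theorem-W-1-p-N}. Your parenthetical claim that ``$\chi_T$ is a specific auxiliary object to which \cite{Shen-2014} applies regardless'' is incorrect: \cite{Shen-2014} gives the H\"older bound~\eqref{ac-Holder} and the $L^\infty$ bound~\eqref{ac-bound} on $\chi_T$, but not a uniform gradient bound. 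Without $\|\nabla\chi_T\|_\infty\le C$, Lemma~\ref{NP-rate-lemma-1} (the Neumann error estimate) is unavailable, and your boundary-layer step collapses. A secondary issue is that even with an $H^1$ error bound, controlling $\nabla(v_\varep-v_0)$ only in $L^2$ does not directly feed into the $L^q$ reverse H\"older inequality; fitting this into the Caffarelli--Peral/Shen framework would require an additional absorption argument you do not mention.

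The paper sidesteps all of this. The boundary reverse H\"older estimate (Lemma~\ref{b-W-lemma-1}) is proved by combining the uniform boundary H\"older estimate (Lemma~\ref{b-H-lemma}) --- obtained by a compactness argument that uses no correctors and no quantitative convergence rate --- with the interior $W^{1,p}$ estimate. Concretely, for $y\in D_1$ with $\delta(y)=\text{dist}(y,\partial D_2)$, the interior estimate on $B(y,\delta(y)/8)$ plus the boundary H\"older bound give $\big(\average_{B(y,\delta(y)/8)}|\nabla u_\varep|^p\big)^{1/p}\le C[\delta(y)]^{\beta-1}$; choosing $\beta>1-1/p$ and integrating in $y$ (via Fubini) yields $\nabla u_\varep\in L^p(D_1)$. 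This route never touches the Neumann convergence rates of Section~2.4, which is why the theorem holds without~\eqref{H-continuity} and with the weaker decay assumption.
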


We conclude this section with some notation and comments on bounding constants $C$.
We will use $\average_E f=\frac{1}{|E|}\int_E f$ to denote the $L^1$ average of $f$ over a set $E$.
For a ball $B=B(x, r)$ we use $\alpha B$ to denote $B(x, \alpha r)$.
We will use $C$ to denote constants that may depend on $d$, $m$, $A(y)$, $\Omega$,
and other relevant parameters, but never on $\varep$.
It is important to note that since our assumptions
on $A$ are invariant under translation and rotation,
the constants $C$ will be invariant under any translation and rotation of $\Omega$.
This allows us to use freely translation and rotation to simply the argument.
As for rescaling, we observe that if $\mathcal{L}_\varep  (u_\varep)=F$ and $v(x)=u_\varep (rx)$,
then $\mathcal{L}_{\varep/r} (v)=G$, where $G(x)=r^2 F(rx)$.

%%%%%%%%%%%%%%%%%%%%%%%%%%%%%%%%%%%%%%%%%%%

%%%%%%%%%%%%%%%%%%%%%%%%%%%%%%%%%%%%%
%%%%%%%%%%%%%%%%%%%%%%%%%%%%%%%%%%%%%%%%%%%%%

\section{Homogenization and convergence rates}
\setcounter{equation}{0}

Let $\mathcal{L}_\varep =-\text{div} \big(A(x/\varep)\nabla \big)$. Throughout this section we
assume that $A(y)= \big( a_{ij}^{\alpha\beta} (y) \big)$ is uniformly almost-periodic in $\mathbb{R}^d$
and satisfies the ellipticity condition (\ref{ellipticity}).
The H\"older continuity (\ref{H-continuity}) and decay condition (\ref{decay-condition})
will not be used here.

\subsection{The homogenized operator and qualitative homogenization}

To define the homogenized operator $\mathcal{L}_0$, we first introduce the space 
$B^2(\mathbb{R}^d)$, the $L^2$ space of almost-periodic functions in the sense of Bezikovich.

A function $f$ in $L^2_{\rm loc} (\mathbb{R}^d)$ is said to belong to $B^2(\mathbb{R}^d)$
if $f$ is a limit of a sequence of trigonometric polynomials in $\mathbb{R}^d$ with respect to the
semi-norm
$$
\| f\|_{B^2} =\limsup_{R\to \infty} \left\{ \average_{B(0,R)} |f|^2 \right\}^{1/2}.
$$
For $f\in L^1_{\rm loc} (\mathbb{R}^d)$,
a number $\langle f\rangle$ is called the mean value of $f$ if
$$
\lim_{\varep\to 0^+} \int_{\mathbb{R}^d}
f(x/\varep) \varphi (x)\, dx =\langle f \rangle \int_{\mathbb{R}^d} \varphi
$$
for any $\varphi\in C_0^\infty(\mathbb{R}^d)$.
It can be shown that if $f, g\in B^2(\mathbb{R}^d)$, then $fg$ has the mean value.
Under the equivalent relation that $f\sim g$ if $\| f-g\|_{B^2}=0$,
the vector space $B^2(\mathbb{R}^d)/\sim$ becomes a Hilbert space with the inner product defined 
by $(f,g)=\langle f,g\rangle$.

Let $V^2_{\rm pot}$ (reps. $V^2_{\rm sol}$) denote the closure in $B^2(\mathbb{R}^d; \mathbb{R}^{m\times d})$
of potential (reps. solenoidal) trigonometric polynomials with mean value zero.
Then
$$
B^2(\mathbb{R}^d; \mathbb{R}^{m\times d})
=V^2_{\rm pot} \oplus V^2_{\rm sol} \oplus \mathbb{R}^{m\times d}.
$$
By the Lax-Milgram Theorem and the ellipticity condition (\ref{ellipticity}), for any
$1\le j\le d$ and $1\le \beta\le m$, there exists a unique $\psi_j^\beta=\big(\psi_{ij}^{\alpha\beta}\big)
\in V^2_{\rm pot}$ such that
\begin{equation}\label{homo-equation}
\langle a_{ik}^{\alpha\gamma} \psi_{kj}^{\gamma\beta} \phi_{i}^\alpha\rangle
=-\langle a_{ij}^{\alpha\beta} \phi_i^\alpha\rangle 
\quad \text{ for any } \phi=\left( \phi_i^\alpha\right)\in V^2_{\rm pot}.
\end{equation}
Let $\widehat{A}=\big ( \widehat{a}_{ij}^{\alpha\beta}\big)$, where
\begin{equation}\label{homo-coeff}
\widehat{a}_{ij}^{\alpha\beta}
=\langle a_{ij}^{\alpha\beta}\rangle + \langle a_{ik}^{\alpha\gamma} \psi_{kj}^{\gamma\beta}\rangle.
\end{equation}
The homogenized operator for $\mathcal{L}_\varep$ is given by $\mathcal{L}_0
=-\text{\rm div} \big(\widehat{A}\nabla\big)$.
We refer the reader to \cite{Jikov-1994} for details (also see earlier work in \cite{Kozlov-1979, Kozlov-1980,
Papanicolaou-1979}).

The proof of the following theorem may be found in \cite{Jikov-1994}.
\begin{theorem}\label{homo-theorem}
Let $\Omega$ be a bounded Lipschitz domain in $\mathbb{R}^d$. For
$F\in H^{-1}(\Omega; \mathbb{R}^m)$ and $\varep>0$,
let $u_\varep \in H^1(\Omega; \mathbb{R}^m)$ be a weak solution of
$\mathcal{L}_\varep (u_\varep) =F$ in $\Omega$.
Suppose that for some subsequence $\{ u_{\varep^\prime} \}$,
$u_{\varep^\prime} \to u_0$ weakly in $H^1(\Omega; \mathbb{R}^m)$ and
$A(x/\varep^\prime)\nabla u_{\varep^\prime} \to G$ weakly in $L^2(\Omega; \mathbb{R}^{m\times d})$.
Then $G=\widehat{A} \nabla u_0$ in $\Omega$.
\end{theorem}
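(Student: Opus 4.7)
The plan is to use Tartar's oscillating test function method, adapted to the almost-periodic framework. For a fixed matrix $\xi = (\xi_j^\beta) \in \mathbb{R}^{m\times d}$, I would first introduce the \emph{adjoint} corrector gradient $\widetilde{\psi}_\xi \in V^2_{\mathrm{pot}}$, defined by the analogue of~(\ref{homo-equation}) for the transposed coefficients $A^{*}$ with data $\xi$; equivalently (by~(\ref{homo-coeff}) applied to $A^{*}$), $A^{*}(\xi + \widetilde{\psi}_\xi) - \widehat{A}^{*}\xi \in V^2_{\mathrm{sol}}$. Because $V^2_{\mathrm{pot}}$ is by construction the $B^2$-closure of gradients of mean-zero potential trigonometric polynomials, I can select trigonometric polynomials $p_n$, adjusted so that $\langle p_n\rangle = 0$ (hence uniformly bounded on $\mathbb{R}^d$), with $\nabla p_n \to \widetilde{\psi}_\xi$ in $B^2(\mathbb{R}^d;\mathbb{R}^{m\times d})$.

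Next, I set $\phi^{\,n}_\varep(x) := \xi\cdot x + \varep\, p_n(x/\varep)$, so that $\nabla \phi^{\,n}_\varep(x) = \xi + \nabla p_n(x/\varep)$. With $n$ fixed and $\varep\to 0$, one has $\phi^{\,n}_\varep \to \xi\cdot x$ uniformly on $\Omega$, $\nabla \phi^{\,n}_\varep \rightharpoonup \xi$ weakly in $L^2(\Omega)$, and $A^{*}(x/\varep)\nabla \phi^{\,n}_\varep \rightharpoonup \langle A^{*}(\xi+\nabla p_n)\rangle$ weakly in $L^2(\Omega)$, with the mean tending to $\widehat{A}^{*}\xi$ as $n\to\infty$ by the $B^2$ approximation and the boundedness of $A^*$.

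Third, I would test the weak form of $\mathcal{L}_{\varep'}u_{\varep'} = F$ against $\eta\,\phi^{\,n}_{\varep'}$ for $\eta\in C_0^\infty(\Omega)$ and rearrange to obtain
\[
\int_\Omega \eta\,\nabla u_{\varep'}\cdot A^{*}(x/\varep')\nabla \phi^{\,n}_{\varep'}\,dx = \int_\Omega \eta F\,\phi^{\,n}_{\varep'}\,dx - \int_\Omega \phi^{\,n}_{\varep'}\,A(x/\varep')\nabla u_{\varep'}\cdot\nabla\eta\,dx.
\]
The two terms on the right pass easily to the limit using the uniform convergence of $\phi^{\,n}_{\varep'}$ to $\xi\cdot x$ and the weak convergence $A(x/\varep')\nabla u_{\varep'}\rightharpoonup G$; combined with the distributional identity $-\mathrm{div}(G)=F$ (inherited from the weak formulation in the limit), they simplify to $\int_\Omega \eta\,G\cdot\xi\,dx$. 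The left-hand side is a weak-times-weak product that I would handle by the Murat--Tartar div--curl lemma: $\nabla u_{\varep'}$ is curl-free, while $A^{*}(x/\varep')\nabla\phi^{\,n}_{\varep'}$ is uniformly bounded in $L^2(\Omega)$ and, modulo an $n$-dependent error $o_n(1)$ in $H^{-1}$ arising from the $B^2$-approximation, has divergence compact in $H^{-1}_{\mathrm{loc}}$ (thanks to the approximate solenoidality of $A^{*}(\xi + \nabla p_n)$). Thus the left-hand side converges to $\int_\Omega \eta\,\nabla u_0\cdot \langle A^{*}(\xi+\nabla p_n)\rangle\,dx$; sending $n\to\infty$ and equating the two sides yields
\[
\int_\Omega \eta\bigl(G\cdot\xi - \nabla u_0 \cdot \widehat{A}^{*}\xi\bigr)\,dx = 0,
\]
so that $G = \widehat{A}\nabla u_0$ a.e.\ on $\Omega$ by the arbitrariness of $\eta$ and $\xi$.

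The main obstacle, and the essential difference from the periodic setting, is the absence of a bounded corrector in the almost-periodic framework: only the gradient $\widetilde{\psi}_\xi$ lives in $V^2_{\mathrm{pot}}$, so the oscillating test function $\phi^{\,n}_\varep$ only \emph{approximately} satisfies $\mathcal{L}_\varep^{*}\phi^{\,n}_\varep = 0$. The approximation of $\widetilde{\psi}_\xi$ by gradients of trigonometric polynomials in the $B^2$ topology — precisely the topology used to define $\widehat{A}$ via~(\ref{homo-coeff}) — is what makes it possible to absorb the error in the weak-times-weak limit.
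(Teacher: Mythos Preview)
The paper does not supply its own proof here; it simply refers to \cite{Jikov-1994}. Your outline is precisely the oscillating test function argument found there, and the architecture---adjoint corrector gradient $\widetilde\psi_\xi\in V^2_{\rm pot}$, trigonometric-polynomial regularization, passage to the limit first in $\varep'$ and then in $n$---is correct.

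One step should be tightened. You assert that $\operatorname{div}\bigl(A^{*}(x/\varep')\nabla\phi^{\,n}_{\varep'}\bigr)$ is compact in $H^{-1}_{\mathrm{loc}}$ modulo an $o_n(1)$ error, but for fixed $n$ this divergence equals $\varep'^{-1}\bigl[\operatorname{div}_y\bigl(A^{*}(\xi+\nabla p_n)\bigr)\bigr](x/\varep')$, which is only \emph{bounded} in $H^{-1}$, and boundedness alone does not suffice for the Murat--Tartar lemma. The fix, implicit in your phrase ``approximate solenoidality,'' is to also choose \emph{solenoidal} trigonometric polynomials $q_n$ with $\|A^{*}(\xi+\widetilde\psi_\xi)-\widehat A^{*}\xi-q_n\|_{B^2}\to 0$. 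Writing $b_n:=A^{*}(\xi+\nabla p_n)$, one then has the splitting $b_n(x/\varep')=\bigl(\widehat A^{*}\xi+q_n(x/\varep')\bigr)+e_n(x/\varep')$, where the first piece is exactly divergence-free and the remainder satisfies $\limsup_{\varep'\to 0}\|e_n(\cdot/\varep')\|_{L^2(\Omega)}=|\Omega|^{1/2}\|e_n\|_{B^2}=o_n(1)$. With this decomposition div--curl applies cleanly to the first piece; alternatively one can drop div--curl altogether and simply integrate by parts, using that $\int\nabla(\eta u_{\varep'})\cdot\bigl(\widehat A^{*}\xi+q_n(x/\varep')\bigr)=0$ and that $u_{\varep'}\to u_0$ strongly in $L^2$ handles the remaining term $\int u_{\varep'}\,b_n(x/\varep')\cdot\nabla\eta$. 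This is how the argument in \cite{Jikov-1994} is organized.
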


The homogenization of Dirichlet problem (\ref{DP-1}) and the Neumann problem (\ref{NP-1})
follows readily from Theorem \ref{homo-theorem}.
For $\varep\ge 0$, $F\in H^{-1}(\Omega; \mathbb{R}^m)$ and
$f\in H^{1/2}(\partial\Omega; \mathbb{R}^m)$,
let $u_\varep\in H^1(\Omega; \mathbb{R}^m)$ be the unique weak solution of (\ref{DP-1}).
Then $u_\varep \to u_0$ weakly in $H^1(\Omega; \mathbb{R}^m)$ 
and strongly in $L^2(\Omega; \mathbb{R}^m)$, as $\varep \to 0$.
Similarly, if $\int_\Omega u_\varep =\int_\Omega u_0=0$,
 the solution of the Neumann problem (\ref{NP-1}) with $F\in H^{-1}(\Omega; \mathbb{R}^m)$
and $g \in H^{-1/2}(\partial\Omega; \mathbb{R}^m)$ 
converges weakly in $H^1(\Omega; \mathbb{R}^m)$ to the solution of the Neumann problem:
$\mathcal{L}_0 (u_0) =F$ in $\Omega$ and $\partial u_0/{\partial \nu_0}=g$
on $\partial\Omega$, where $\partial u_0/{\partial \nu_0}
= n\widehat{A}\nabla u_0$.

\subsection{Quantitative estimates for the approximate correctors}

To study the convergence rates of $u_\varep$ to $u_0$,
we need to introduce the approximate correctors $\chi_T = \big(\chi_{T, j}^\beta\big)$.
Let $P_j^\beta (y)=y_j e^\beta$, where $1\le j\le d$, $1\le \beta\le m$, and
$e^\beta=(0, \dots, 1, \dots, 0)$ with $1$ in the $\beta^{th}$ position.
For each $T>0$, the function $u=\chi_{T, j}^\beta$ is defined as the weak solution of
\begin{equation}\label{ac}
-\text{\rm div} \big( A(y)\nabla u \big) + T^{-2} u =\text{\rm div} \big( A(y)\nabla P_j^\beta\big)
\qquad \text{ in } \mathbb{R}^d,
\end{equation}
with the property
$$
\sup_{x\in \mathbb{R}^d} \| u\|_{H^1(B(x, 1))} <\infty.
$$
It is not hard to show that
\begin{equation}
\sup_{x\in \mathbb{R}^d} \average_{B(x,T)} \big( |\nabla \chi_T|^2 +T^{-2} |\chi_T|^2\big)\le C,
\end{equation}
where $C$ depends only on $d$, $m$, and $\mu$ (the almost-periodicity of $A$ is not needed).

For $\sigma \in (0, 1]$ and $T\ge 1$, define
\begin{equation}\label{Theta}
\Theta_\sigma (T)=\inf_{0<R\le T}
\left\{ \rho (R) +\left(\frac{R}{T}\right)^\sigma \right\}.
\end{equation}
The following theorem was proved in \cite{Shen-2014}.

\begin{theorem}\label{ac-theorem}
Suppose that $A$ is uniformly almost-periodic in $\mathbb{R}^d$
and satisfies (\ref{ellipticity}).
Let $\sigma \in (0,1)$ and $T\ge 1$.
Then, for any $x,y\in \mathbb{R}^d$,
\begin{equation}\label{ac-Holder}
|\chi_T (x)-\chi_T (y)|\le C_\sigma\, T^{1-\sigma}\,  |x-y|^\sigma,
\end{equation}
and
\begin{equation}\label{ac-bound}
T^{-1} \| \chi_T \|_{L^\infty(\mathbb{R}^d)} \le C_\sigma \, \Theta_\sigma (T),
\end{equation}
where $C_\sigma$ depends only on $\sigma$ and $A$.
\end{theorem}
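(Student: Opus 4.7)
The plan is to prove \eqref{ac-Holder} by interior regularity applied to a rescaled corrector, and then to use it together with almost-periodicity through a shift-comparison argument to obtain \eqref{ac-bound}. For the H\"older estimate, I would introduce $v(y):=T^{-1}\chi_T(Ty)$; a direct calculation shows that $v$ satisfies
\begin{equation*}
-\mathrm{div}_y\!\bigl(\widetilde A(y)\nabla v\bigr)+v=\mathrm{div}_y\!\bigl(\widetilde A(y)\nabla P_j^\beta\bigr),\qquad \widetilde A(y):=A(Ty),
\end{equation*}
a uniformly elliptic equation on $\mathbb{R}^d$ with bounded coefficients and divergence-form source. A Caccioppoli estimate on unit balls combined with Moser iteration (the zeroth-order term provides $L^\infty$ confinement at infinity) yields $\|v\|_{L^\infty(\mathbb{R}^d)}\le C$, equivalently $\|\chi_T\|_{L^\infty}\le CT$. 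Interior De~Giorgi--Nash furnishes a fixed H\"older exponent $\sigma_0>0$; to reach any $\sigma\in(0,1)$, I would run a Campanato-type iteration at dyadic scales, comparing $v$ with solutions of the constant-coefficient homogenized equation (which are $C^{1,\alpha}$ for every $\alpha<1$) and using the qualitative homogenization in Theorem~\ref{homo-theorem} to control the error. Scaling back gives \eqref{ac-Holder}.

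For \eqref{ac-bound}, the key input is that $\chi_T$ has vanishing mean value, which follows from the defining equation and the fact that divergences of almost-periodic fields have mean zero. Fix $R\in(0,T]$; by the definition of $\rho(R)$, for every $y\in\mathbb{R}^d$ there exists $z_y$ with $|z_y|\le R$ and $\|A(\cdot+y)-A(\cdot+z_y)\|_{L^\infty}\le 2\rho(R)$. The function $w(x):=\chi_T(x+y)-\chi_T(x+z_y)$ satisfies an equation $-\mathrm{div}(A(\cdot+z_y)\nabla w)+T^{-2}w=\mathrm{div}\,f_y$ with $f_y=(A(\cdot+y)-A(\cdot+z_y))(\nabla P_j^\beta+\nabla\chi_T(\cdot+y))$. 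An $L^\infty$ estimate for bounded solutions on $\mathbb{R}^d$ at the natural length scale $T$ produces $\|w\|_{L^\infty}\le CT\rho(R)$, and in particular $|\chi_T(y)-\chi_T(z_y)|\le CT\rho(R)$. Combined with the H\"older bound $|\chi_T(z_y)-\chi_T(z_0)|\le C_\sigma T^{1-\sigma}(2R)^\sigma$ and the shift estimate applied at $y=0$, the triangle inequality yields
\begin{equation*}
|\chi_T(0)-\chi_T(y)|\le C_\sigma T\bigl[\rho(R)+(R/T)^\sigma\bigr]\qquad\text{for all }y\in\mathbb{R}^d.
\end{equation*}
Averaging in $y$ and using $\langle\chi_T\rangle=0$ bounds $|\chi_T(0)|$ by the same quantity, and then $\|\chi_T\|_{L^\infty}$ follows by triangle inequality. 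Taking the infimum over $R\in(0,T]$ gives \eqref{ac-bound}.

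The main technical obstacle is the bound $\|w\|_{L^\infty}\le CT\rho(R)$: the naive estimate $CT\|f_y\|_{L^\infty}$ would require $\nabla\chi_T\in L^\infty$, which is precisely the Lipschitz bound this paper aims to prove and is not yet available. The fix is to avoid pointwise control of $\nabla\chi_T$ and instead use an $L^p$-based Moser or Green's-function estimate for $-\mathrm{div}(A\nabla\cdot)+T^{-2}$ on $\mathbb{R}^d$, exploiting that the associated Green's function decays on scale $T$ uniformly in $T\ge 1$, combined with local $L^p$ gradient bounds on $\chi_T$ coming from Meyers' higher-integrability upgrade of the uniform $H^1$ estimate.
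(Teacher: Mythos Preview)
The paper does not prove this theorem; it simply cites \cite{Shen-2014}. So there is no in-paper proof to compare against, and I evaluate your outline on its own merits and against what that reference is known to do.

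Your overall architecture is correct and matches \cite{Shen-2014}: obtain the H\"older estimate \eqref{ac-Holder} by applying uniform interior H\"older regularity to a rescaled corrector, and then deduce \eqref{ac-bound} by a shift-comparison exploiting the definition of $\rho(R)$, the H\"older bound on a ball of radius $R$, and the mean-zero property $\langle \chi_T\rangle=0$. The triangle-inequality chain you wrote down, and the averaging step to extract $|\chi_T(0)|$, are exactly right.

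There is, however, a genuine gap in your treatment of \eqref{ac-Holder}: the operators here are elliptic \emph{systems} ($1\le \alpha,\beta\le m$), and De~Giorgi--Nash--Moser theory does not apply. Moser iteration cannot give you $\|v\|_{L^\infty}\le C$, and there is no ``fixed H\"older exponent $\sigma_0$'' to bootstrap from. The H\"older estimate for every $\sigma\in(0,1)$ must be obtained \emph{directly} by the three-step compactness method of Avellaneda--Lin (as extended to the almost-periodic setting in \cite{Shen-2014}; see also the discussion around \eqref{H-estimate} in the present paper). Your phrase ``Campanato-type iteration \dots\ using the qualitative homogenization in Theorem~\ref{homo-theorem}'' points toward this, but mislabels it: the compactness method is a contradiction argument, not an iteration with quantitative error control at each scale.

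For \eqref{ac-bound}, you correctly flag the obstacle that the source $f_y$ contains $\nabla\chi_T$, but your proposed fix via Meyers is fragile: the small extra integrability does not pair with the Green's-function gradient $|x-y|^{1-d}$ in dimensions $d\ge 3$. The workable route (used in \cite{Shen-2014}) is to first obtain an $L^2$-averaged bound $\big(\average_{B(x_0,T)}|w|^2\big)^{1/2}\le CT\rho(R)$, which requires only the uniformly available estimate $\sup_{x_0}\average_{B(x_0,T)}|\nabla\chi_T|^2\le C$, and then upgrade to $\|w\|_{L^\infty}\le CT\rho(R)$ using the compactness H\"older estimate applied to the equation for $w$ (equivalently, using the H\"older continuity of the Green's function of $-\mathrm{div}(A\nabla\cdot)+T^{-2}$ in the pole variable, which follows from \eqref{ac-Holder} once that is in hand).
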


The rest of this section is devoted to the study of error estimates 
of $\| u_\varep -u_0\|_{L^2(\Omega)}$.
The material is divided into two subsections.
The first subsection treats Dirichlet boundary condition, while the second handles
the Neumann boundary condition.

%%%%%%%%%%%%%%%%%%%%%%%%%%%%%%%%%%%%%%%%%%%%%%%%%%%

\subsection{Convergence rates: Dirichlet boundary condition}

We begin by using Theorem \ref{ac-theorem} to extend a result in \cite{Shen-2014}.

\begin{lemma}\label{lemma-H-1}
%Suppose that $A(y)$ is uniformly almost-periodic in $\mathbb{R}^d$ and satisfies (\ref{ellipticity}).
Let $\Omega$ be a bounded Lipschitz domain in $\mathbb{R}^d$.
Let $u_\varep\in H^1(\Omega; \mathbb{R}^m)$ be the weak solution of (\ref{DP-1}).
Let
\begin{equation}
w_\varep=u_\varep -v_0 -\varep \chi_T (x/\varep)\nabla v_0 -v_\varep,
\end{equation}
where $T=\varep^{-1}$, $v_0\in W^{2,2}(\Omega;\mathbb{R}^m)$,
$\mathcal{L}_0 (v_0)=F$ in $\Omega$, and $v_\varep\in H^1(\Omega; \mathbb{R}^m)$ is the weak solution of
$$
 \left\{
 \aligned
 \mathcal{L}_\varep (v_\varep)& =0 & \quad &\text{ in } \Omega,\\
v_\varep& =u_\varep -v_0 -\varep \chi_T(x/\varep) \nabla v_0&\quad  &\text{ on }
\partial\Omega.
\endaligned
\right.
$$
Then, for any $\sigma \in (0,1)$,
\begin{equation}\label{H-1-1}
\| w_\varep \|_{H^1(\Omega)}
\le C_\sigma \big\{ \Theta_\sigma (T) +\langle |\psi-\nabla \chi_T|\rangle \big\} 
\left\{ \|\nabla^2 v_0\|_{L^2(\Omega)} +\| \nabla v_0\|_{L^2(\Omega)} \right\},
\end{equation}
where $\psi =\big(\psi_{ij}^{\alpha\beta}\big)$ is defined by
(\ref{homo-equation})
 and $C_\sigma$ depends only on $\sigma$, $A$, and $\Omega$.
\end{lemma}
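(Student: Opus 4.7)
The plan is the standard two-scale energy method, exploiting the fact that the boundary correction $v_\varepsilon$ is chosen precisely so that $w_\varepsilon \in H^1_0(\Omega; \mathbb{R}^m)$. By ellipticity (\ref{ellipticity}) and the Poincar\'e inequality on $H^1_0(\Omega)$, it suffices to control
$$\int_\Omega A(x/\varepsilon)\nabla w_\varepsilon : \nabla w_\varepsilon\, dx \;=\; \langle \mathcal{L}_\varepsilon w_\varepsilon, w_\varepsilon\rangle$$
from above by the claimed right-hand side times $\|\nabla w_\varepsilon\|_{L^2(\Omega)}$.

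First I would compute $\mathcal{L}_\varepsilon w_\varepsilon$ explicitly. Using $\mathcal{L}_\varepsilon u_\varepsilon = F = \mathcal{L}_0 v_0$, $\mathcal{L}_\varepsilon v_\varepsilon = 0$, and the product rule applied to $\varepsilon\chi_T(x/\varepsilon)\nabla v_0$, a direct computation yields the divergence-form identity
$$\mathcal{L}_\varepsilon w_\varepsilon^\gamma = \partial_i\bigl\{(b_{ij}^{\gamma\beta}(x/\varepsilon) - \widehat{a}_{ij}^{\gamma\beta})\partial_j v_0^\beta\bigr\} + \partial_i\bigl\{a_{ij}^{\gamma\alpha}(x/\varepsilon)\,\varepsilon \chi_{T,k}^{\alpha\beta}(x/\varepsilon)\partial_{jk} v_0^\beta\bigr\},$$
where $b_{ij}^{\gamma\beta}(y) := a_{ij}^{\gamma\beta}(y) + a_{ik}^{\gamma\alpha}(y)\partial_k\chi_{T,j}^{\alpha\beta}(y)$ is the flux of the approximate corrector. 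Moreover, the approximate corrector equation~(\ref{ac}) yields the essential structural identity
$$\partial_i b_{ij}^{\alpha\beta}(y) = T^{-2}\chi_{T,j}^{\alpha\beta}(y),$$
which serves as the almost-periodic substitute for the divergence-free flux available in periodic homogenization.

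After pairing with $w_\varepsilon$, the term carrying $\varepsilon\chi_T^\varepsilon \nabla^2 v_0$ is handled by Cauchy--Schwarz using the pointwise bound $\varepsilon\|\chi_T\|_{L^\infty} = T^{-1}\|\chi_T\|_{L^\infty} \le C_\sigma\Theta_\sigma(T)$ supplied by Theorem~\ref{ac-theorem}; it contributes at most $C_\sigma\Theta_\sigma(T)\|\nabla^2 v_0\|_{L^2}\|\nabla w_\varepsilon\|_{L^2}$. For the term involving $b(x/\varepsilon) - \widehat{A}$, I would split
$$b(y) - \widehat{A} \;=\; \bigl(b(y) - \langle b\rangle\bigr) + \bigl(\langle b\rangle - \widehat{A}\bigr).$$
The constant piece $\langle b\rangle - \widehat{A}$ equals $-\langle A(\psi - \nabla\chi_T)\rangle$ by the defining formulas (\ref{homo-equation})--(\ref{homo-coeff}) of $\psi$ and $\widehat{A}$, so ellipticity gives $|\langle b\rangle - \widehat{A}| \le C\langle|\psi - \nabla\chi_T|\rangle$, producing the desired $C\langle|\psi - \nabla\chi_T|\rangle\|\nabla v_0\|_{L^2}\|\nabla w_\varepsilon\|_{L^2}$ contribution.

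The main obstacle is the oscillating remainder $b(\cdot/\varepsilon) - \langle b\rangle$, which is mean-zero but not small in amplitude. To extract smallness I would introduce, in the spirit of the periodic theory and its almost-periodic analogue in~\cite{Shen-2014}, a dual flux corrector $\Phi_T$: an almost-periodic tensor, antisymmetric in an appropriate pair of indices, whose divergence reproduces $b - \langle b\rangle$ up to a controlled error coming from the nonzero $T^{-2}\chi_T$ term, and for which the estimates of Theorem~\ref{ac-theorem} propagate to yield $T^{-1}\|\Phi_T\|_{L^\infty}\le C_\sigma\Theta_\sigma(T)$. A single integration by parts, in which the antisymmetry cancels the $\partial_{ki} w_\varepsilon$ second-derivative terms, then trades one derivative for a factor of $\varepsilon$ and bounds this contribution by
$$C\varepsilon\|\Phi_T\|_{L^\infty}\bigl(\|\nabla v_0\|_{L^2}+\|\nabla^2 v_0\|_{L^2}\bigr)\|\nabla w_\varepsilon\|_{L^2} \;\le\; C_\sigma\Theta_\sigma(T)\bigl(\|\nabla v_0\|_{L^2}+\|\nabla^2 v_0\|_{L^2}\bigr)\|\nabla w_\varepsilon\|_{L^2},$$
the controlled divergence error being absorbed into the same quantities via the key bound on $\chi_T$. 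Collecting the three contributions, absorbing $\|\nabla w_\varepsilon\|_{L^2}$ into the left-hand side of the energy inequality, and invoking Poincar\'e yields~(\ref{H-1-1}).
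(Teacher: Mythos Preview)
Your proposal is correct and follows the same overall strategy as the paper: compute $\mathcal{L}_\varepsilon w_\varepsilon$, estimate the $\varepsilon\chi_T\nabla^2 v_0$ term via Theorem~\ref{ac-theorem}, split the flux defect into its mean (controlled by $\langle|\psi-\nabla\chi_T|\rangle$) and its mean-zero oscillation, and introduce an auxiliary potential to gain a factor of $\varepsilon$ on the latter.

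The one genuine point of divergence is in how that auxiliary potential is built. You propose a directly antisymmetric dual flux corrector $\Phi_T$ with $\mathrm{div}\,\Phi_T = b-\langle b\rangle$ up to a $T^{-2}\chi_T$ error, mimicking the periodic construction. The paper instead solves the scalar modified Helmholtz equation
\[
-\Delta H + T^{-2} H = B_T - \langle B_T\rangle \quad\text{in }\mathbb{R}^d,
\]
proves the bounds $T^{-2}\|H\|_\infty\le C\Theta_1(T)$, $T^{-1}\|\nabla H\|_\infty\le C_\sigma\Theta_\sigma(T)$, and crucially $\|\nabla(\partial_i h_{ij}^{\alpha\beta})\|_\infty\le C_\sigma\Theta_\sigma(T)$, and only \emph{then} invokes antisymmetry through the algebraic identity
\[
\partial_i w^\alpha\cdot \Delta h_{ij}^{\alpha\beta}
=\partial_k\bigl(\partial_i w^\alpha\cdot\varepsilon\,\partial_k h_{ij}^{\alpha\beta}\bigr)
-\partial_i\bigl(\partial_k w^\alpha\cdot\varepsilon\,\partial_k h_{ij}^{\alpha\beta}\bigr)
+\partial_k w^\alpha\cdot\partial_{ik} h_{ij}^{\alpha\beta}.
\]
This sidesteps the need to actually construct an antisymmetric tensor in $B^2(\mathbb{R}^d)$ (which is awkward since $b-\langle b\rangle$ is not divergence-free) and packages the ``controlled error'' you allude to directly into the $T^{-2}H$ term. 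Functionally the two routes are equivalent, but the Helmholtz construction is more concrete in the almost-periodic setting and is what~\cite{Shen-2014} actually carries out.
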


\begin{proof}
The proof is similar to that of Theorem 7.3 in \cite{Shen-2014}, 
where $v_0$ is taken to be $u_0$.
A direct computation shows that
$$
\mathcal{L}_\varep (w_\varep)
=-\text{\rm div} \big(B_T (x/\varep)\nabla v_0\big) + \varep\, \text{\rm div} \big\{ A(x/\varep)\chi_T (x/\varep)
\nabla^2 v_0\big\},
$$
where $B_T (y) =\big( b_{T, ij}^{\alpha\beta} \big)$ is given by
\begin{equation}\label{B-T}
b_{T, ij}^{\alpha\beta} (y)
=\widehat{a}_{ij}^{\alpha\beta} -a_{ij}^{\alpha\beta} (y)
-a_{ik}^{\alpha\gamma} (y) \frac{\partial }{\partial y_k} \left\{ \chi_{T, j}^{\gamma\beta} (y)\right\}.
\end{equation}
Since $w_\varep =0$ on $\partial\Omega$, it follows that
\begin{equation}\label{H-0-1}
c\int_\Omega |\nabla w_\varep|^2
\le \left|\int_\Omega \text{\rm div} \big( B_T (x/\varep)\nabla v_0\big) \cdot w_\varep\, dx\right|
+\int_\Omega |\varep\chi_T (x/\varep) |\, |\nabla^2 v_0|\, |\nabla w_\varep|\, dx.
\end{equation}
Thus, it suffices to show that the right hand side of (\ref{H-0-1}) is bounded by
$$
C_\sigma \big\{ \Theta_\sigma (T) +\langle |\psi-\nabla \chi_T|\rangle \big\}
\left\{ \|\nabla^2 v_0\|_{L^2(\Omega) }+\|\nabla v_0\|_{L^2(\Omega)} \right\}
\| w_\varep\|_{H^1(\Omega)}
$$
for any $\sigma\in (0,1)$.
By (\ref{ac-bound}) and Cauchy inequality,
the second integral in the right hand side of (\ref{H-0-1}) is bounded by
$$
 C_\sigma\, \Theta_\sigma (T) \, \|\nabla^2 v_0\|_{L^2(\Omega)} \|\nabla w_\varep\|_{L^2(\Omega)}.
$$
The estimate of the first integral is much more delicate
and is done by the same argument as in the proof of Theorem 7.3 in \cite{Shen-2014}.
The key idea is to solve the equation 
\begin{equation}\label{H}
-\Delta H +T^{-2} H=B_T  -\langle B_T \rangle \quad \text{ in } \mathbb{R}^d,
\end{equation}
and show that there exists a solution $H=H_T=\left( h_{ij}^{\alpha\beta} \right)\in W^{2,2}_{loc} (\mathbb{R}^d)$ 
satisfying
\begin{equation}\label{H-1}
\left\{
\aligned
T^{-2} \| H\|_\infty & \le C \, \Theta_1 (T), \\
T^{-1} \|\nabla H\|_\infty & \le C_\sigma\, \Theta_\sigma (T),
\endaligned
\right.
\end{equation}
and
\begin{equation}\label{H-2}
\bigg\|\nabla \frac{\partial h_{ij}^{\alpha\beta}}{\partial x_i} \bigg\|_\infty  \le C_\sigma \, \Theta_\sigma (T)
\end{equation}
for any $\sigma \in (0,1)$ (the index $i$ in (\ref{H-2}) is summed from $1$ to $d$).
We omit the details.
A similar approach will be used in the proof of Lemma \ref{NP-rate-lemma-1}.
\end{proof}

\begin{lemma}\label{lemma-H-2}
Let $\Omega$ be a bounded $C^{1, \alpha}$ domain in $\mathbb{R}^d$ for some $\alpha>0$.
Let $u_\varep\in H^1(\Omega; \mathbb{R}^m)$ $(\varep\ge 0)$ be the weak solution of (\ref{DP-1}).
Then, for any $\sigma, \delta\in (0,1)$,
\begin{equation}\label{H-2-1}
\aligned
\| u_\varep -u_0\|_{L^2(\Omega)}
 & \le C \big\{ \Theta_\sigma (T) +\langle |\psi-\nabla \chi_T|\rangle \big\}
\big\{ \|\nabla^2 v_0\|_{L^2(\Omega)} +\| \nabla v_0\|_{L^2(\Omega)} \big\}\\
&\qquad \qquad +C \, \| f- v_0 \|_{C^\delta (\partial\Omega)} +
C \big[\Theta_\sigma (T)\big]^{1-\delta} \| \nabla v_0 \|_{C^\delta (\partial\Omega)},
\endaligned
\end{equation}
where $v_0 \in W^{2,2}(\Omega; \mathbb{R}^m)
\cap C^{1, \delta} (\overline{\Omega}; \mathbb{R}^m)$ and
$\mathcal{L}_0 (v_0)=F$ in $\Omega$.
The constant $C$ depends only on $\delta$, $\sigma$, $A$, and $\Omega$.
\end{lemma}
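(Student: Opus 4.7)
The plan is to extend Lemma~\ref{lemma-H-1} (which treats the case $v_0=u_0$) to a general test function $v_0$ solving the homogenized equation, and to absorb the resulting boundary mismatch via the uniform $C^\delta$ boundary estimate~\eqref{H-estimate}. With $T=\varep^{-1}$, I mimic the decomposition from the previous lemma: let
\[
w_\varep := u_\varep - v_0 - \varep\,\chi_T(x/\varep)\nabla v_0 - v_\varep,
\]
where $v_\varep\in H^1(\Omega;\mathbb{R}^m)$ is the $\mathcal{L}_\varep$-harmonic function in $\Omega$ whose trace on $\partial\Omega$ equals $(f-v_0)-\varep\chi_T(x/\varep)\nabla v_0$. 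By construction $w_\varep$ vanishes on $\partial\Omega$, so Lemma~\ref{lemma-H-1} bounds $\|w_\varep\|_{H^1(\Omega)}$ by the first group of terms on the right-hand side of~\eqref{H-2-1}. Rearranging,
\[
u_\varep-u_0 = w_\varep + v_\varep + \varep\chi_T(x/\varep)\nabla v_0 + (v_0-u_0),
\]
so the task reduces to estimating the three remaining summands in $L^2(\Omega)$.

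Two of these summands are handled easily. The product $\varep\chi_T(x/\varep)\nabla v_0$ is bounded in $L^2$ by $\varep\|\chi_T\|_{L^\infty}\|\nabla v_0\|_{L^2}\le C_\sigma\Theta_\sigma(T)\|\nabla v_0\|_{L^2}$ thanks to~\eqref{ac-bound}. The difference $v_0-u_0$ solves the constant-coefficient system $\mathcal{L}_0(v_0-u_0)=0$ with boundary data $v_0-f$, so applying~\eqref{H-estimate} with $\varep=0$, $F=0$, $h=0$, $\beta=\delta$ yields $\|v_0-u_0\|_{L^\infty(\Omega)}\le C\|f-v_0\|_{C^\delta(\partial\Omega)}$, and hence the same bound in $L^2(\Omega)$. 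The delicate summand is $v_\varep$: applying~\eqref{H-estimate} to $\mathcal{L}_\varep v_\varep=0$ gives
\[
\|v_\varep\|_{L^\infty(\Omega)}\le C\,\|(f-v_0)-\varep\chi_T(x/\varep)\nabla v_0\|_{C^\delta(\partial\Omega)},
\]
and the product rule $\|fg\|_{C^\delta}\le\|f\|_{C^\delta}\|g\|_{C^\delta}$ reduces matters to establishing
\[
\|\varep\chi_T(\cdot/\varep)\|_{C^\delta(\partial\Omega)}\le C\,[\Theta_\sigma(T)]^{1-\delta}.
\]

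This last Holder-norm bound is the main obstacle. It comes from interpolating the two estimates supplied by Theorem~\ref{ac-theorem}: the uniform bound $\varep\|\chi_T\|_{L^\infty}\le C_\sigma\Theta_\sigma(T)$ from~\eqref{ac-bound}, and, after the substitution $T=\varep^{-1}$, the rescaled Holder bound $\varep|\chi_T(x/\varep)-\chi_T(y/\varep)|\le C_{\sigma'}|x-y|^{\sigma'}$ for every $\sigma'\in(0,1)$, which follows from~\eqref{ac-Holder}. I plan a dichotomy on the size of $|x-y|$ relative to $\Theta_\sigma(T)^{1/\delta}$. When $|x-y|^\delta\ge\Theta_\sigma(T)$, the uniform bound gives
\[
\varep|\chi_T(x/\varep)-\chi_T(y/\varep)|\le C\Theta_\sigma(T)=C[\Theta_\sigma(T)]^{1-\delta}[\Theta_\sigma(T)]^\delta\le C[\Theta_\sigma(T)]^{1-\delta}|x-y|^\delta.
\]
When $|x-y|^\delta<\Theta_\sigma(T)$, I choose $\sigma':=\delta(2-\delta)\in(\delta,1)$, so that $(\sigma'-\delta)/\delta=1-\delta$; then the rescaled Holder bound yields
\[
\varep|\chi_T(x/\varep)-\chi_T(y/\varep)|\le C_{\sigma'}|x-y|^\delta\,|x-y|^{\sigma'-\delta}\le C_{\sigma'}[\Theta_\sigma(T)]^{1-\delta}|x-y|^\delta.
\]
Together with the trivial $L^\infty$ bound (noting $\Theta_\sigma(T)\le[\Theta_\sigma(T)]^{1-\delta}$ once $\Theta_\sigma(T)\le 1$), this yields the required Holder estimate on $\varep\chi_T(\cdot/\varep)$. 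Assembling the four contributions gives~\eqref{H-2-1}.
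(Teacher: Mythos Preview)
Your proof follows essentially the same route as the paper's: the same decomposition into $w_\varep$, $v_0-u_0$, $\varep\chi_T(x/\varep)\nabla v_0$, and $v_\varep$, with the first handled by Lemma~\ref{lemma-H-1}, the third by~\eqref{ac-bound}, and the second and fourth by the uniform H\"older estimate~\eqref{H-estimate}. The paper in fact controls $\|v_0-u_0\|_{L^2(\Omega)}$ by $\|v_0-f\|_{L^2(\partial\Omega)}$ directly, but your use of~\eqref{H-estimate} is also fine.

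There is, however, a slip in your dichotomy for the bound $\|\varep\chi_T(\cdot/\varep)\|_{C^\delta}\le C[\Theta_\sigma(T)]^{1-\delta}$. In Case~1 you assume $|x-y|^\delta\ge\Theta_\sigma(T)$ and then write $[\Theta_\sigma(T)]^\delta\le |x-y|^\delta$; but this would require $\Theta_\sigma(T)\le|x-y|$, which does \emph{not} follow from $\Theta_\sigma(T)\le|x-y|^\delta$ when $|x-y|<1$. (Take $\Theta_\sigma(T)=\tfrac12$, $\delta=\tfrac12$, $|x-y|=\tfrac13$.) With your threshold, Case~1 only gives the crude bound $|g(x)-g(y)|/|x-y|^\delta\le C\Theta_\sigma(T)/\Theta_\sigma(T)=C$, not $C[\Theta_\sigma(T)]^{1-\delta}$.

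The fix is exactly what the paper does: apply~\eqref{H-estimate} with a \emph{smaller} exponent $\delta_1<\delta$, and then interpolate. From $\|\varep\chi_T(\cdot/\varep)\|_\infty\le C\Theta_\sigma(T)$ and $[\varep\chi_T(\cdot/\varep)]_{C^{\sigma'}}\le C_{\sigma'}$ one gets, for $0<\delta_1<\sigma'<1$,
\[
[\varep\chi_T(\cdot/\varep)]_{C^{\delta_1}}\le C_{\sigma'}\,\Theta_\sigma(T)^{1-\delta_1/\sigma'}.
\]
Choosing $\sigma'$ close enough to $1$ that $\delta_1/\sigma'<\delta$ gives $\|\varep\chi_T(\cdot/\varep)\|_{C^{\delta_1}}\le C[\Theta_\sigma(T)]^{1-\delta}$, and since $\|f-v_0\|_{C^{\delta_1}}\le C\|f-v_0\|_{C^\delta}$ and $\|\nabla v_0\|_{C^{\delta_1}}\le C\|\nabla v_0\|_{C^\delta}$ on the bounded domain, the stated conclusion follows. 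Your Case~2 computation with $\sigma'=\delta(2-\delta)$ is correct and is in effect carrying out this interpolation by hand; it is only the threshold and Case~1 that need adjusting.
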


\begin{proof}
Let $v_\varep$ and $w_\varep$ be defined as in Lemma \ref{lemma-H-1}.
Then
\begin{equation}\label{H-2-2}
\| u_\varep -u_0\|_{L^2(\Omega)}
\le \| w_\varep\|_{L^2(\Omega)}
+\| v_0 -u_0\|_{L^2(\Omega)}
+\varep \|\chi_T\|_\infty \|\nabla v_0\|_{L^2(\Omega)}
+\| v_\varep\|_{L^2(\Omega)}.
\end{equation}
In view of (\ref{H-1-1}) we only need to handle the last three terms in the right hand side of (\ref{H-2-2}).

First, since $\mathcal{L}_0 (v_0 -u_0)=0$ in $\Omega$ and $\Omega$ is $C^{1,\alpha}$, 
we obtain 
\begin{equation}\label{H-2-3}
\| v_0 -u_0\|_{L^2(\Omega)}
\le C\, \| v_0 -f\|_{L^2(\partial\Omega)}.
\end{equation}
Next, we note that
$$
\varep \, \|\chi_T\|_\infty \|\nabla v_0\|_{L^2(\Omega)}
=T^{-1}  \|\chi_T\|_\infty \|\nabla v_0\|_{L^2(\Omega)}
\le C_\sigma \Theta_\sigma (T)\| \nabla v_0\|_{L^2(\Omega)}.
$$

Finally, recall that $\mathcal{L}_\varep (v_\varep)=0$ in $\Omega$ and 
$v_\varep =f-v_0 -\varep \chi_T(x/\varep) \nabla v_0$ on $\partial\Omega$.
Since $\Omega$ is $C^{1, \alpha}$,
we may use the H\"older estimates (\ref{H-estimate}) to obtain
\begin{equation}
\aligned
\| v_\varep \|_{L^2(\Omega)}
&\le C\, \| v_\varep \|_{C^{\delta_1} (\partial\Omega)}\\
&\le C\, \| f-v_0\|_{C^{\delta_1}(\partial\Omega)} 
+C  \|\varep  \chi_T (x/\varep)\|_{C^{\delta_1} (\partial\Omega)}
\| \nabla v_0 \|_{C^{\delta_1} (\partial\Omega)}
\endaligned
\end{equation}
for any $\delta_1\in (0,1)$.
Note that $\|\varep \chi_T (x/\varep)\|_\infty \le C_\sigma \Theta_\sigma (T)$ and
$\|\varep \chi_T (x/\varep)\|_{C^{0, \delta}} \le C_\delta$.
By interpolation this implies that 
$$
\|\varep \chi_T(x/\varep)\|_{C^{\delta_1}} \le C\, \big[\Theta_\sigma (T)\big]^{1- \delta_2}
$$
for any $\sigma \in (0,1)$ and $0<\delta_1<\delta_2<1$.
As a result, we see that
$$
\| v_\varep\|_{L^2(\Omega)}
\le C\, \| f-v_0\|_{C^\delta(\partial\Omega)}
+C\, \big[ \Theta_\sigma (T)\big]^{1-\delta} \| \nabla v_0\|_{C^\delta(\partial\Omega)}
$$
for any $\sigma, \delta \in (0,1)$.
The proof is now complete.
\end{proof}

\begin{remark}
{\rm 
If we let $v_0=u_0$ in Lemma \ref{lemma-H-2}, then
\begin{equation}\label{remark-H-1}
\aligned
\| u_\varep -u_0\|_{L^2(\Omega)}
&\le C\, \omega (\varep)\, 
\| u_0 \|_{W^{2, p} (\Omega)},
\endaligned
\end{equation}
where $p>d$ and
\begin{equation}\label{omega}
\omega (\varep)=\omega_\sigma (\varep)
=\big[\Theta_1 (\varep^{-1})\big]^{\sigma}
+\sup_{T\ge \varep^{-1}} \langle |\psi -\nabla \chi_T |\rangle.
\end{equation}
Here we have used the observation $\Theta_\sigma (T) \le C_\sigma \big[\Theta_1(T)\big]^\sigma$
as well as Sobolev imbedding $\|\nabla u_0\|_{C^\delta(\Omega)} \le C\, \| u_0\|_{W^{2, p}(\Omega)}$
for $p>d$ and $\delta=1-(d/p)$.
Note that $\omega (\varep)$ is a nondecreasing continuous function on $(0,1]$ and
$\omega (0+)=0$.
}
\end{remark}

 Estimate (\ref{remark-H-1}) is one of the main results proved in \cite{Shen-2014}.
 In the periodic setting it gives a near optimal convergence rate of $O(\varep^\gamma)$ for any 
 $\gamma\in (0,1)$.
 However, since $\Omega$ is only assumed to be $C^{1,\alpha}$, the $W^{2,p}$ norm in (\ref{remark-H-1})
 is not convenient in some applications.
 Our next theorem is an attempt to resolve this issue
 (see \cite{KLS2} for analogous results in the periodic setting).
 For simplicity we assume that $F=0$.
 
 \begin{theorem}\label{rate-theorem-1}
 Suppose that $A(y)$ is uniformly almost-periodic in $\mathbb{R}^d$ and satisfies (\ref{ellipticity}).
 Let $\Omega$ be a bounded $C^{1, \alpha}$ domain in $\mathbb{R}^d$ for some $\alpha>0$.
 Let $u_\varep $ $ (\varep\ge 0)$ be the weak solution of Dirichlet problem:
 $\mathcal{L}_\varep (u_\varep)=0$ in $\Omega$ and $u_\varep=f$ on $\partial\Omega$.
 Then, for any $\delta \in (0, \alpha)$,
 \begin{equation}\label{rate-2}
 \| u_\varep -u_0\|_{L^2(\Omega)}
 \le C  \, \big[ \omega (\varep)\big]^{2/3} \, 
 \| f\|_{C^{1,\delta}(\partial\Omega)},
 \end{equation}
 where $\omega (\varep)=\omega_\sigma (\varep)$ is defined by (\ref{omega}) and
 $C$ depends only on  $\delta$, $\sigma$, $A$, and $\Omega$.
 \end{theorem}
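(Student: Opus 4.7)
The plan is to apply Lemma~\ref{lemma-H-2} with a family of test functions $v_0=v_0^{(t)}$ parameterized by a scale $t\in(0,1]$, and then balance the resulting estimate by optimizing over $t$. The naive choice $v_0=u_0$ fails when $\delta\le 1/2$: interior Schauder estimates only give $|\nabla^2 u_0(x)|\le C\,d(x,\partial\Omega)^{\delta-1}\|f\|_{C^{1,\delta}(\partial\Omega)}$, so $\nabla^2 u_0$ need not lie in $L^2(\Omega)$.

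To bypass this, smooth the boundary data: let $f_t\in C^\infty(\partial\Omega)$ be obtained from $f$ by mollification at scale $t$ in local $C^{1,\alpha}$-charts of $\partial\Omega$ (via a partition of unity), and let $v_0^{(t)}\in H^1(\Omega;\mathbb{R}^m)$ be the solution of $\mathcal{L}_0 v_0^{(t)}=0$ in $\Omega$ with $v_0^{(t)}=f_t$ on $\partial\Omega$. Standard H\"older interpolation yields, for $\delta_1\in(0,1+\delta)$ and $\alpha'\in(\delta,1]$,
\begin{equation*}
\|f-f_t\|_{C^{\delta_1}(\partial\Omega)}\le C\,t^{1+\delta-\delta_1}\|f\|_{C^{1,\delta}(\partial\Omega)},\qquad\|f_t\|_{C^{1,\alpha'}(\partial\Omega)}\le C\,t^{\delta-\alpha'}\|f\|_{C^{1,\delta}(\partial\Omega)}.
\end{equation*}
Boundary $C^{1,\alpha'}$-Schauder estimates on the $C^{1,\alpha}$ domain $\Omega$ (valid for $\alpha'\le\alpha$), combined with the interior bound $|\nabla^2 v_0^{(t)}(x)|\le C\,d(x,\partial\Omega)^{\alpha'-1}\|v_0^{(t)}\|_{C^{1,\alpha'}(\overline\Omega)}$, yield after $L^2$-integration (which converges provided $\alpha'>1/2$) and the choice $\alpha'=\min\{\alpha,\delta+\tfrac{1}{2}\}$ the crucial estimate
\begin{equation*}
\|\nabla^2 v_0^{(t)}\|_{L^2(\Omega)}\le C\,t^{-1/2}\|f\|_{C^{1,\delta}(\partial\Omega)},
\end{equation*}
alongside the straightforward $\|\nabla v_0^{(t)}\|_{L^2(\Omega)}+\|\nabla v_0^{(t)}\|_{C^{\delta_1}(\overline\Omega)}\le C\|f\|_{C^{1,\delta}(\partial\Omega)}$.

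Inserting these bounds into Lemma~\ref{lemma-H-2}, and using $\Theta_\sigma(T)+\langle|\psi-\nabla\chi_T|\rangle\le C\,\omega(\varep)$ for $T=\varep^{-1}$, one obtains
\begin{equation*}
\|u_\varep-u_0\|_{L^2(\Omega)}\le C\,\|f\|_{C^{1,\delta}(\partial\Omega)}\bigl(\omega(\varep)\,t^{-1/2}+t^{1+\delta-\delta_1}+\omega(\varep)^{1-\delta_1}\bigr).
\end{equation*}
Setting $\delta_1=\min(\delta,1/3)$ and $t=\omega(\varep)^{2/3}$ balances the first two terms at $\omega^{2/3}$ (using $\delta_1\le\delta$ and $t\le 1$), while the third is at most $\omega^{2/3}$ since $1-\delta_1\ge 2/3$ and $\omega(\varep)\le 1$. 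This proves~\eqref{rate-2}.

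The main technical difficulty is the $L^2$-bound for $\nabla^2 v_0^{(t)}$: the boundary Schauder theory on $\Omega$ is capped at the regularity exponent $\alpha$ of $\partial\Omega$, while the weight $d(x,\partial\Omega)^{2(\alpha'-1)}$ is integrable only when $\alpha'>1/2$. When $\alpha>1/2$ the estimate is obtained as above, but when $\alpha\le 1/2$ a refinement is needed---for instance via a smoother interior approximating domain, or a dyadic decomposition of the boundary layer---to recover the same exponent $t^{-1/2}$.
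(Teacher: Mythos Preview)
Your overall strategy---build a one-parameter family $v_0^{(t)}$ of smoother $\mathcal{L}_0$-harmonic approximations, feed them into Lemma~\ref{lemma-H-2}, and optimize in $t$---matches the paper's. The execution differs: the paper enlarges the domain to $\Omega_s\supset\Omega$ and solves $\mathcal{L}_0(v_s)=0$ in $\Omega_s$ with data $f\circ\Lambda_s^{-1}$, whereas you keep $\Omega$ fixed and mollify the boundary data. The paper's key observation is that, since $\Omega$ sits at distance $\ge cs$ from $\partial\Omega_s$, the \emph{purely interior} estimate $|\nabla^2 v_s(x)|\le C\,\mathrm{dist}(x,\partial\Omega_s)^{-1}\|\nabla v_s\|_{L^\infty(\Omega_s)}$ already gives $\|\nabla^2 v_s\|_{L^2(\Omega)}\le C s^{-1/2}\|f\|_{C^{1,\delta}}$, with no restriction on $\alpha$.

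Your route, by contrast, needs the boundary $C^{1,\alpha'}$ estimate on $\Omega$ itself, and you correctly identify that this forces $\alpha'\le\alpha$ while integrability of the weight $d(x,\partial\Omega)^{2(\alpha'-1)}$ requires $\alpha'>1/2$. For $\alpha\le 1/2$ these are incompatible, and you leave this as an acknowledged gap. The ``refinements'' you mention---a smoother interior approximating domain---are precisely the paper's device (or its mirror image, shrinking $\Omega$ inward). So as written your argument is complete only for $\alpha>1/2$; to cover the full range stated in the theorem you would need to carry out one of those refinements, and at that point you are essentially reproducing the paper's proof. The domain-enlargement trick is worth internalizing: it trades boundary regularity of the solution (capped by $\alpha$) for interior regularity at a controlled distance, which is unconstrained.
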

 
 \begin{proof}
 We begin by constructing a family of bounded $C^{1, \alpha}$ domains $\{ \Omega_s: s\in (0,1/2)\}$
 such that  (1) $\Omega\subset \Omega_s$,
 (2) for each $s\in (0,1/2)$,  there is a $C^{1, \alpha}$
 diffeomorphism $\Lambda_s: \partial\Omega\to \partial \Omega_s$ with uniform bounds, 
 and (3) $|x-\Lambda_s (x)|\approx \text{dist}(x, \partial\Omega_s)
 \approx s$ for every $x\in \partial\Omega$.
 The constants $C$ in the estimates below do not depend on $s$.
 
 Next, let $f_s (x) =f (\Lambda^{-1}_s (x))$ for $x\in \partial\Omega_s$ and
 $v=v_s$ be the solution of  Dirichlet problem:
 $\mathcal{L}_0 (v)=0$ in $\Omega_s$ and $v=f_s$ on $\partial\Omega_s$.
  We will show that
 \begin{equation}\label{H-4-1}
 \| u_\varep- u_0\|_{L^2(\Omega)}
 \le C \left\{ s + s^{-\frac{1}{2}} \omega (\varep)\right\} \| f\|_{C^{1,\delta} (\partial\Omega)}.
 \end{equation}
  Estimate (\ref{rate-2}) follows from (\ref{H-4-1}) by choosing
 $s\in (0,1/2)$ so that $s^{3/2} =c\, \omega (\varep)$.
 
 To see (\ref{H-4-1}), we use Lemma \ref{lemma-H-2} to obtain
 \begin{equation}\label{H-4-2}
 \| u_\varep -u_0\|_{L^2(\Omega)}
  \le C\, \omega (\varep) 
 \left\{ \|\nabla^2 v\|_{L^2(\Omega)}
+\|\nabla v\|_{C^{\delta} (\Omega)} \right\}
 +C\, \| f-v\|_{C^\delta(\partial\Omega)}
 \end{equation}
 for any $\delta\in (0,\alpha)$.
 Since $\mathcal{L}_0 (v)=0$ in $\Omega_s$ and $\Omega_s$ is $C^{1, \alpha}$,
 \begin{equation}\label{H-4-3}
 \| f-v\|_{C^\delta (\partial\Omega)} 
 \le C\, s \, \|\nabla v \|_{C^{ \delta} (\Omega_s)}
 \le C\, s\, \| f_s\|_{C^{1,\delta}(\partial\Omega_s)}
 \le C\, s\, \| f\|_{C^{1, \delta}(\partial\Omega)}.
 \end{equation}
  By the interior estimates for $\mathcal{L}_0$ and the fact that
 $\Omega \subset \{ x\in \Omega_s: \text{dist}(x, \partial\Omega_s)\ge c\, s\}$,
  it is not hard to see that
 $$
 \aligned
 \int_\Omega |\nabla^2 v|^2\, dx 
 &\le C\, \|\nabla v\|_{L^\infty(\Omega_s)}^2 \int_\Omega \frac{dx}{\big[ \text{dist} (x, \partial\Omega_s)\big]^2}\\
& \le C\, s^{-1} \|\nabla v\|_{L^\infty(\Omega_s)}^2
\le C\, s^{-1}\| f\|^2_{C^{1, \delta}(\partial\Omega)}.
\endaligned
 $$
 This, together with (\ref{H-4-2})-(\ref{H-4-3}) and
 the estimate $\| \nabla v\|_{C^{ \delta}(\Omega)} \le C\, \| f\|_{C^{1,\delta}(\partial\Omega)}$,
  yields (\ref{H-4-1}).
 \end{proof}
 
%%%%%%%%%%%%%%%%%%%%%%%%%%%%%%%%%%%%%%%%%%%%%

\subsection{Convergence rates: Neumann boundary conditions}

In this subsection we establish estimates on convergence rates for the Neumann problem
(\ref{NP-1}) under an additional assumption  that
\begin{equation}\label{ac-Lip}
\sup_{T\ge 1} \| \nabla \chi_T \|_{L^\infty(\mathbb{R}^d)} \le C_0 <\infty.
\end{equation}
This condition follows from the uniform interior Lipschitz estimates (see Remark \ref{ac-remark}).
In particular, it holds under the assumptions on $A$ in Theorem \ref{main-theorem-Lip}.

\begin{lemma}\label{NP-rate-lemma-1}
Suppose that $A$ is uniformly almost-periodic and satisfies (\ref{ellipticity}).
Also assume that the condition (\ref{ac-Lip}) holds.
Let $\Omega$ be a bounded Lipschitz domain in $\mathbb{R}^d$.
Let
\begin{equation}
\left\{
\aligned
\mathcal{L}_\varep (u_\varep) & =F &\quad &\text{ in } \Omega,\\
\frac{\partial u_\varep}{\partial \nu_\varep}
&=g&\quad &\text{ on } \partial\Omega,
\endaligned
\right.
\quad \text{ and } \quad 
\left\{
\aligned
\mathcal{L}_0 (v_0) & =F &\quad &\text{ in } \Omega,\\
\frac{\partial v_0}{\partial \nu_0}
&=g_0&\quad &\text{ on } \partial\Omega,
\endaligned
\right.
\end{equation}
where $F\in L^2(\Omega; \mathbb{R}^m)$ and
$g, g_0\in L^2(\partial\Omega;\mathbb{R}^m)$.
Suppose $\int_\Omega u_\varep =\int_\Omega v_0 $.
Then, for any $\sigma\in (0,1)$,
\begin{equation}\label{NP-rate-1}
\aligned
\| u_\varep -v_0\|_{L^2(\Omega)}
& \le C\, \| g-g_0 \|_{H^{-1/2}(\partial\Omega)}+
 C\, \big\{\Theta_\sigma (T)
+\langle|\psi-\nabla \chi_T |\rangle\big\}\|\nabla^2 v_0\|_{L^2(\Omega)} \\
&\qquad\qquad\qquad
+C \left\{ \big[ \Theta_\sigma (T)\big]^{\frac12}
+\langle|\psi-\nabla \chi_T |\rangle\right\}
\|(\nabla v_0)^*\|_{L^2(\partial \Omega)},
\endaligned
\end{equation}
where $T=\varep^{-1}$ and $(\nabla v_0)^*$ denotes the non-tangential
maximal function of $\nabla v_0$.
The constant $C$ in (\ref{NP-rate-1}) depends only on $\sigma$, $A$, and $\Omega$.
\end{lemma}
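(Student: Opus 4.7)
The plan is to adapt the two-scale expansion of Lemma~\ref{lemma-H-1} to the Neumann setting via duality against the adjoint Neumann problem. First I introduce $w_\varep := u_\varep - v_0 - \varep\,\chi_T(x/\varep)\,\nabla v_0$ with $T=\varep^{-1}$, and (following the proof of Lemma~\ref{lemma-H-1}) verify that
\[
\mathcal{L}_\varep w_\varep = -\text{\rm div}\big(B_T(x/\varep)\nabla v_0\big) + \varep\,\text{\rm div}\big(A(x/\varep)\chi_T(x/\varep)\nabla^2 v_0\big) \quad\text{in } \Omega,
\]
while the first-order boundary terms cancel to leave $\partial w_\varep/\partial\nu_\varep = (g-g_0) + n\cdot B_T(x/\varep)\nabla v_0 - \varep\,n\cdot A(x/\varep)\chi_T(x/\varep)\nabla^2 v_0$ on $\partial\Omega$.

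For the $L^2$ estimate by duality, fix $\Phi\in L^2(\Omega;\mathbb{R}^m)$ with $\int_\Omega\Phi=0$ and let $z$ be the mean-zero solution of $\mathcal{L}_0^* z=\Phi$ in $\Omega$, $\partial z/\partial\nu_0^*=0$ on $\partial\Omega$; since $\mathcal{L}_0$ has constant coefficients and $\Omega$ is Lipschitz, standard theory provides $\|z\|_{W^{2,2}(\Omega)}+\|(\nabla z)^*\|_{L^2(\partial\Omega)}\le C\|\Phi\|_{L^2}$. Writing $u_\varep-v_0=w_\varep+\varep\,\chi_T(x/\varep)\nabla v_0$ and using $\int_\Omega\Phi=0$, the second piece integrates against $\Phi$ to at most $C\,\Theta_\sigma(T)\|\nabla v_0\|_{L^2}\|\Phi\|_{L^2}$ by (\ref{ac-bound}). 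For the main term $\int_\Omega w_\varep\cdot\Phi$, I would integrate against $\mathcal{L}_0^*z$ by parts (the boundary contribution vanishes thanks to $\partial z/\partial\nu_0^*=0$), insert the adjoint two-scale expansion $\tilde z:=z+\varep\chi_T^*(x/\varep)\nabla z$, and test the weak form of $w_\varep$ against $\tilde z$. Regrouping yields
\[
\int_\Omega w_\varep\cdot\Phi = \int_\Omega B_T(x/\varep)\nabla v_0\cdot\nabla\tilde z + \int_\Omega B_T^*(x/\varep)\nabla z\cdot\nabla w_\varep + \int_{\partial\Omega}(g-g_0)\tilde z + R_\varep,
\]
where $R_\varep$ collects the $\varep\,\chi_T\,\nabla^2$ remainders.

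The two $B_T$-integrals are treated as in Lemma~\ref{lemma-H-1}: decompose $B_T=\langle B_T\rangle+(B_T-\langle B_T\rangle)$, so that $|\langle B_T\rangle|\le C\langle|\psi-\nabla\chi_T|\rangle$ yields one of the stated factors, while the mean-zero part is rewritten via $B_T-\langle B_T\rangle=(-\Delta+T^{-2})H$ and integrated by parts using the bounds (\ref{H-1})-(\ref{H-2}) on $H=H_T$; the flux corrector for the adjoint operator handles $B_T^*$ analogously. A parallel energy estimate for $w_\varep$, obtained by testing its weak form against $w_\varep-(w_\varep)_\Omega$, controls $\|\nabla w_\varep\|_{L^2}$ by the same right-hand side. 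Hypothesis (\ref{ac-Lip}) (which applies to $\chi_T^*$ by symmetry) ensures $\|\nabla\tilde z\|_{L^2}\le C\|\Phi\|_{L^2}$, and the trace inequality yields $|\int_{\partial\Omega}(g-g_0)\tilde z|\lesssim\|g-g_0\|_{H^{-1/2}(\partial\Omega)}\|\Phi\|_{L^2}$.

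The main obstacle is producing the coefficient $[\Theta_\sigma(T)]^{1/2}$ (rather than the naive $\Theta_\sigma(T)$) in front of $\|(\nabla v_0)^*\|_{L^2(\partial\Omega)}$: a pointwise estimate $\varep\,|\nabla H(x/\varep)|\lesssim\Theta_\sigma(T)$ at the boundary would lose the square root. I expect to recover it by an interpolation argument: combining $\|\varep\chi_T(\cdot/\varep)\|_{L^\infty(\mathbb{R}^d)}\lesssim\Theta_\sigma(T)$ with $\|\nabla[\varep\chi_T(\cdot/\varep)]\|_{L^\infty(\mathbb{R}^d)}\le C_0$ from (\ref{ac-Lip}) and interpolating the trace operator gives
\[
\|\varep\chi_T(\cdot/\varep)\|_{H^{1/2}(\partial\Omega)}\le C\,[\Theta_\sigma(T)]^{1/2},
\]
and this bound, together with the duality between $H^{1/2}(\partial\Omega)$ and $H^{-1/2}(\partial\Omega)$, converts the boundary contributions from the $\varep\chi_T^*\nabla z$-component of $\tilde z$ into the desired $\sqrt{\Theta_\sigma(T)}$-coefficient of $\|(\nabla v_0)^*\|_{L^2(\partial\Omega)}$. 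Taking the supremum over admissible $\Phi$ then produces~(\ref{NP-rate-1}).
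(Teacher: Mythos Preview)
Your approach is genuinely different from the paper's and has a real gap. The paper does not use duality at all: it simply tests the weak form of $w_\varep$ against $w_\varep$ itself (exactly the ``parallel energy estimate'' you mention as a side step) to bound $\|\nabla w_\varep\|_{L^2(\Omega)}$, and then applies Poincar\'e. Everything you need is already in that energy identity; the extra duality layer with $z$ and $\tilde z$ is unnecessary.

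More seriously, your claim that ``standard theory provides $\|z\|_{W^{2,2}(\Omega)}\le C\|\Phi\|_{L^2}$'' is false for general Lipschitz domains, which is the only regularity assumed on $\Omega$ here. Without $\nabla^2 z\in L^2$ you cannot control the $R_\varep$ remainder terms (the $\varep\chi_T\nabla^2$ pieces) in your duality identity, so the argument does not close. The paper avoids this because $\|\nabla^2 v_0\|_{L^2}$ sits harmlessly on the right-hand side of the estimate, whereas your auxiliary $z$ would have to be controlled on the left.

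Regarding the square root: the paper's mechanism is not trace interpolation of $\varep\chi_T$. Instead, after writing $B_T-\langle B_T\rangle=-\Delta H+T^{-2}H$, it introduces a cutoff $\varphi_\delta$ at distance $\delta$ from $\partial\Omega$. On the interior support of $\varphi_\delta$ the usual integration by parts with (\ref{H-1})--(\ref{H-2}) gives a factor $\Theta_\sigma(T)\,\delta^{-1/2}\|(\nabla v_0)^*\|_{L^2(\partial\Omega)}$; on the boundary layer $\{ \text{dist}(x,\partial\Omega)\le 2c\delta\}$ one uses hypothesis (\ref{ac-Lip}) to get $\|\Delta H\|_\infty\le C$, yielding $\delta^{1/2}\|(\nabla v_0)^*\|_{L^2(\partial\Omega)}$. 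Balancing at $\delta=c\,\Theta_\sigma(T)$ produces $[\Theta_\sigma(T)]^{1/2}$. Your proposed $H^{1/2}$--interpolation bound on $\varep\chi_T(\cdot/\varep)$ is correct in itself, but it targets the wrong term: in your setup it would land on the $(g-g_0)\,\tilde z$ boundary pairing, not on the $B_T$ bulk term where the $\|(\nabla v_0)^*\|_{L^2}$ factor actually originates.
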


\begin{proof}
As in the case of Dirichlet boundary condition, we consider 
$$
w_\varep =u_\varep -v_0 -\varep \chi_T (x/\varep) \nabla v_0,
$$
where $T=\varep^{-1}$.
We will show that
\begin{equation}\label{NP-rate-2}
\aligned
\|\nabla w_\varep\|_{L^2(\Omega)}
& \le C\, \| g-g_0 \|_{H^{-1/2}(\partial\Omega)}+
C \big\{\Theta_\sigma (T)
+\langle|\psi-\nabla \chi_T |\rangle\big\}\|\nabla^2 v_0\|_{L^2(\Omega)} \\
&\qquad\qquad\qquad
+C \left\{ \big[ \Theta_\sigma (T)\big]^{\frac12}
+\langle|\psi-\nabla \chi_T |\rangle\right\}
\|(\nabla v_0)^*\|_{L^2(\partial \Omega)}.
\endaligned
\end{equation}
Since $|\int_\Omega w_\varep|\le  \varep \|\chi_T\|_\infty \|\nabla v_0\|_{L^1(\Omega)}
\le C\, \Theta_\sigma (T) \|\nabla v_0\|_{L^1(\Omega)}$,
the estimate (\ref{NP-rate-1}) follows from (\ref{NP-rate-2})
by Poincar\'e inequality.

To prove (\ref{NP-rate-2}), we observe that
\begin{equation}\label{NP-rate-3}
\aligned
&\int_\Omega \nabla w_\varep \cdot A(x/\varep)\nabla w_\varep\, dx\\
%&=\int_\Omega \nabla w_\varep
%\cdot \bigg\{ A(x/\varep)\nabla u_\varep
%-A(x/\varep)\nabla u_0 
%-A(x/\varep)\nabla \chi_T (x/\varep) \nabla u_0
%-A(x/\varep)\, \varep \chi_T(x/\varep)\nabla^2 u_0 \bigg\}\\
&=<w_\varep, g-g_0>
-\int_\Omega \nabla w_\varep \cdot B_T(x/\varep) \nabla v_0
-\int_\Omega \nabla w_\varep\cdot A(x/\varep)\, \varep \chi_T(x/\varep)\nabla^2 v_0,
\endaligned
\end{equation}
where $B_T(y)=\widehat{A} -A(y)-A(y)\nabla \chi_T (y)$, and we have used the fact
$$
\int_\Omega \nabla w_\varep\cdot \big( A(x/\varep)\nabla u_\varep
-\widehat{A}\nabla v_0 \big)
=<w_\varep, g-g_0>.
$$
Since $\int_{\partial\Omega} (g-g_0)=0$,
$$
\aligned
|< w_\varep, g-g_0>| &\le \| g-g_0\|_{H^{-1/2}(\partial\Omega)}
\| w_\varep -E\|_{H^{1/2}(\partial\Omega)}\\
&\le C\, \| g-g_0\|_{H^{-1/2}(\partial\Omega)}
\|\nabla w_\varep\|_{L^2(\Omega)},
\endaligned
$$
where $E=\average_\Omega w_\varep$.
Also, the last term in the right hand of (\ref{NP-rate-3}) is bounded by 
$$
C_\sigma \Theta_\sigma (T)\|\nabla w_\varep\|_{L^2(\Omega)}
\|\nabla^2 v_0\|_{L^2(\Omega)} .
$$
Furthermore, since 
$
|\langle B_T\rangle|\le C \, \langle |\psi-\nabla \chi_T| \rangle,
$
in view of (\ref{NP-rate-3}), 
it suffices to show that
\begin{equation}\label{NP-rate-4}
\aligned
& \left| \int_\Omega \nabla w_\varep \cdot \left\{ B_T (x/\varep)-\langle B_T \rangle \right\} \nabla v_0\right|\\
&\qquad \le C\, \| \nabla w_\varep\|_{L^2(\Omega)} 
 \left\{\Theta_\sigma (T)
+\langle|\psi-\nabla \chi_T |\rangle\right\}\|\nabla^2 v_0\|_{L^2(\Omega)} \\
&\qquad\qquad\qquad\qquad
+C \, \|\nabla w_\varep\|_{L^2(\Omega)}
\big[ \Theta_\sigma (T)\big]^{\frac12}
\|(\nabla v_0)^*\|_{L^2(\partial \Omega)}.
\endaligned
\end{equation}
This will be done by using a line of argument similar to that used in the proof of
Theorem 7.3 in \cite{Shen-2014} as well as  in the proof of Lemma \ref{lemma-H-1}.

Let $H=H_T\in W^{2,2}_{loc} (\mathbb{R^d})$ be a solution of (\ref{H}) that satisfies (\ref{H-1})-(\ref{H-2}).
In view of the first estimate in (\ref{H-1}),
it suffices to prove (\ref{NP-rate-4}) with $B_T(x/\varep)-\langle B_T\rangle$
replaced by $\Delta H(x/\varep)$.
Let $\varphi=\varphi_\delta \in C_0^\infty(\mathbb{R}^d)$ be a cut-off function
such that $0\le \varphi\le 1$, $\varphi (x)=1$ if dist$(x, \partial\Omega)\ge 2c\delta$,
$\varphi (x)=0$ if dist$(x, \partial\Omega)\le c\delta$, and $|\nabla\varphi|\le C\delta^{-1}$,
where $\delta\in (0,1)$ is to be determined.
A direct computation shows  that for each $1\le j\le d$ and $1\le \beta\le m$,
$$
\aligned
\frac{\partial w_\varep^\alpha}{\partial x_i} \cdot \Delta h_{ij}^{\alpha\beta} (x/\varep)
&=\frac{\partial }{\partial x_k}
\left\{ \frac{\partial w_\varep^\alpha}{\partial x_i} \cdot \varep
\frac{\partial h_{ij}^{\alpha\beta}}{\partial x_k} (x/\varep) \right\}
-\frac{\partial }{\partial x_i}
\left\{ \frac{\partial w_\varep^\alpha}{\partial x_k} \cdot \varep
\frac{\partial h_{ij}^{\alpha\beta}}{\partial x_k} (x/\varep) \right\}\\
&\qquad\qquad\qquad
+\frac{\partial w_\varep^\alpha}{\partial x_k} \cdot 
\frac{\partial^2 h_{ij}^{\alpha\beta}}{\partial x_i \partial x_k} (x/\varep),
\endaligned
$$
where the summation convention is used.
It follows that
\begin{equation}\label{NP-rate-5}
\aligned
&\left |\int_\Omega \nabla w_\varep \cdot \Delta H (x/\varep) (\nabla v_0 )\varphi\right|
\le C\, \varep  \int_\Omega |\nabla w_\varep|\, |\nabla H(x/\varep)|\, |\nabla \big( (\nabla v_0)\varphi\big)|\\
& \qquad\qquad \qquad\qquad\qquad\qquad\qquad
+C \int_\Omega \left|\frac{\partial w^\alpha_\varep}{\partial x_k}\right|
\, \left| \frac{\partial h_{ij}^{\alpha\beta}}{\partial x_i\partial x_k} (x/\varep)\right|\,
\left|\frac{\partial v_0^\beta}{\partial x_j}\right|\varphi\\
&\le C\, \Theta_\sigma (T) \, \| \nabla w_\varep\|_{L^2(\Omega)}
\left\{ \|\nabla^2 v_0\|_{L^2(\Omega)}
+\delta^{-1/2} \|(\nabla v_0)^*\|_{L^2(\partial \Omega)} \right\},
\endaligned
\end{equation}
where we have used estimates (\ref{H-1}) and (\ref{H-2}) as well as 
the observation 
$$
\|(\nabla v_0)(\nabla \varphi)\|_{L^2(\Omega)}
\le C\delta^{-1/2} \|(\nabla v_0)^*\|_{L^2(\partial\Omega)}.
$$

Finally, using the condition  (\ref{ac-Lip}), we see that
$$
\|\Delta H\|_\infty \le T^{-2} \| H\|_\infty +2 \| B_T\|_\infty\le 
C+ C\, \|\nabla \chi_T\|_\infty \le C.
$$
Hence,
$$
\aligned
\left |\int_\Omega \nabla w_\varep \cdot \Delta H (x/\varep) (\nabla v_0 )(1-\varphi) \right|
&\le C\, \|\Delta H\|_\infty \|\nabla w_\varep\|_{L^2(\Omega)}
\left\{ \int_{\substack{x\in \Omega\\ \text{dist} (x, \partial\Omega)\le 2c\delta }}
|\nabla v_0|^2\right\}^{1/2}\\
&\le C\, \delta^{1/2} \|\nabla w_\varep \|_{L^2(\Omega)} \|(\nabla v_0)^*\|_{L^2(\partial \Omega)}.
\endaligned
$$
By choosing $\delta=c \, \Theta_\sigma (T)$,
this, together with (\ref{NP-rate-5}), completes the proof.
\end{proof}

\begin{remark}
{\rm
Let $u_\varep$ $(\varep\ge 0)$ be the weak solution of (\ref{NP-1}) in a bounded Lipschitz domain
$\Omega$. It follows from Lemma \ref{NP-rate-lemma-1} that
\begin{equation}\label{NP-rate-7}
\aligned
\| u_\varep -u_0\|_{L^2(\Omega)}
&\le
C\, \big\{\Theta_\sigma (T)
+\langle|\psi-\nabla \chi_T |\rangle\big\}\|\nabla^2 u_0\|_{L^2(\Omega)} \\
&\qquad\qquad
+C \left\{ \big[ \Theta_\sigma (T)\big]^{\frac12}
+\langle|\psi-\nabla \chi_T |\rangle\right\}
\|(\nabla u_0)^*\|_{L^2(\partial \Omega)}.
\endaligned
\end{equation}
This estimate is not sharp in the periodic setting.
It only gives $\| u_\varep -u_0\|_{L^2(\Omega)} =  O( \varep^\gamma)$
for any $0<\gamma<(1/2)$.
}
\end{remark}

\begin{theorem}\label{NP-rate-theorem-2}
Suppose that $A$ satisfies the same condition as in Lemma \ref{NP-rate-lemma-1}.
Let $\Omega$ be a bounded $C^{1, \alpha}$ domain in $\mathbb{R}^d$ with connected boundary
for some $\alpha>0$.
Let 
\begin{equation}
\left\{
\aligned
\mathcal{L}_\varep (u_\varep) & =0 &\quad &\text{ in } \Omega,\\
\frac{\partial u_\varep}{\partial \nu_\varep}
&=g&\quad &\text{ on } \partial\Omega,
\endaligned
\right.
\quad \text{ and } \quad 
\left\{
\aligned
\mathcal{L}_0 (u_0) & =0 &\quad &\text{ in } \Omega,\\
\frac{\partial u_0}{\partial \nu_0}
&=g&\quad &\text{ on } \partial\Omega,
\endaligned
\right.
\end{equation}
where $g\in L^2(\partial \Omega; \mathbb{R}^m)$.
Assume $\int_\Omega u_\varep =\int_\Omega u_0$.
Then
\begin{equation}\label{NP-rate-00}
\| u_\varep -u_0 \|_{L^2(\Omega)}
\le C\, \big \{ \Theta_\sigma (T)+ \langle |\psi-\nabla \chi_T|\rangle\big\}^{1/2}
   \| g\|_{L^2(\partial\Omega)},
\end{equation}
where $T=\varep^{-1}$, 
$\sigma \in (0,1)$, and $C$ depends only on $\sigma$, $A$, and $\Omega$.
\end{theorem}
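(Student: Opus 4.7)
The plan is to combine Lemma~\ref{NP-rate-lemma-1} with a regularization of the Neumann data $g$, in the same spirit as the proof of Theorem~\ref{rate-theorem-1}, so as to trade off the three terms on the right-hand side of~(\ref{NP-rate-1}). Abbreviate $\omega := \Theta_\sigma(T) + \langle|\psi-\nabla\chi_T|\rangle$. We may assume $\omega \le 1$, since otherwise the conclusion follows from the trivial energy estimate $\|u_\varep - u_0\|_{L^2(\Omega)} \le C\|g\|_{L^2(\partial\Omega)}$. For $\eta \in (0,1]$ to be chosen, let $g_\eta$ denote the projection of $g$ onto the span of Laplace--Beltrami eigenfunctions on $\partial\Omega$ with eigenvalue $\le \eta^{-4}$ (adjusting by a constant to preserve $\int_{\partial\Omega} g_\eta = 0$). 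A direct spectral computation gives
\[
\|g-g_\eta\|_{H^{-1/2}(\partial\Omega)} \le \eta \|g\|_{L^2(\partial\Omega)}, \qquad
\|g_\eta\|_{H^{1/2}(\partial\Omega)} \le \eta^{-1}\|g\|_{L^2(\partial\Omega)},
\]
together with $\|g_\eta\|_{L^2(\partial\Omega)} \le \|g\|_{L^2(\partial\Omega)}$.

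Let $v_0$ solve $\mathcal{L}_0 v_0 = 0$ in $\Omega$ with $\partial v_0/\partial\nu_0 = g_\eta$ on $\partial\Omega$ and $\int_\Omega v_0 = \int_\Omega u_\varep$. Since $\mathcal{L}_0$ has constant coefficients and $\Omega$ is $C^{1,\alpha}$, classical elliptic regularity yields
\[
\|\nabla^2 v_0\|_{L^2(\Omega)} \le C\|g_\eta\|_{H^{1/2}(\partial\Omega)} \le C\eta^{-1}\|g\|_{L^2(\partial\Omega)},
\]
while the standard $L^2$ non-tangential maximal function estimate for the constant-coefficient system $\mathcal{L}_0$ in a $C^{1,\alpha}$ domain gives $\|(\nabla v_0)^*\|_{L^2(\partial\Omega)} \le C\|g_\eta\|_{L^2(\partial\Omega)} \le C\|g\|_{L^2(\partial\Omega)}$. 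Moreover, $u_0 - v_0$ solves a Neumann problem for $\mathcal{L}_0$ with data $g-g_\eta$, so the energy estimate and Poincar\'e inequality give $\|u_0 - v_0\|_{L^2(\Omega)} \le C\|g - g_\eta\|_{H^{-1/2}(\partial\Omega)} \le C\eta\|g\|_{L^2(\partial\Omega)}$. Plugging these bounds into Lemma~\ref{NP-rate-lemma-1}, and using that $[\Theta_\sigma(T)]^{1/2} + \langle|\psi-\nabla\chi_T|\rangle \le 2\omega^{1/2}$ when $\omega \le 1$, we obtain
\[
\|u_\varep - u_0\|_{L^2(\Omega)} \le C\bigl(\eta + \omega\eta^{-1} + \omega^{1/2}\bigr)\|g\|_{L^2(\partial\Omega)}.
\]
Choosing $\eta = \omega^{1/2}$ balances the first two terms at $\omega^{1/2}$ and produces the desired estimate~(\ref{NP-rate-00}).

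The main obstacle is the justification of the $H^2$ regularity estimate for $v_0$ and the $L^2$ non-tangential maximal function bound for $\nabla v_0$ in a merely $C^{1,\alpha}$ domain. Both are standard for the constant-coefficient system $\mathcal{L}_0$: the former is obtained by flattening $\partial\Omega$ via a $C^{1,\alpha}$ diffeomorphism and applying tangential difference-quotient arguments followed by using the equation to recover the normal second derivative, and the latter is supplied by the layer-potential machinery as used in~\cite{AL-1987,KLS1}. Some care is required, as $C^{1,\alpha}$ is close to the borderline regularity for such estimates, but in our setting the constant-coefficient nature of $\mathcal{L}_0$ makes these results available.
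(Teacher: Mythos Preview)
Your approach has a genuine gap at the $H^2$ step. The estimate
\[
\|\nabla^2 v_0\|_{L^2(\Omega)} \le C\,\|g_\eta\|_{H^{1/2}(\partial\Omega)}
\]
is the standard Neumann regularity statement, but it requires the domain to be at least $C^{1,1}$; it is not available for merely $C^{1,\alpha}$ domains with $\alpha<1$, even for the Laplacian. Your proposed justification---flatten the boundary, take tangential difference quotients, recover the normal second derivative from the equation---breaks precisely here: a $C^{1,\alpha}$ flattening map turns $\mathcal{L}_0$ into a divergence-form operator whose coefficients are only $C^\alpha$, not Lipschitz, and the difference-quotient argument needs Lipschitz coefficients to close. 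Smoothing $g$ does not help, because the obstruction is geometric, not a matter of data regularity; with $C^\alpha$ coefficients you get $C^{1,\alpha'}$ solutions by Schauder, but not $H^2$ up to the boundary. So the term $\omega\,\eta^{-1}$ in your final inequality is unjustified, and without it the balancing fails.

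The paper's proof is designed exactly to sidestep this issue. Instead of regularizing the data, it enlarges the domain: it builds $C^{1,\alpha}$ domains $\Omega_t\supset\Omega$ with $\mathrm{dist}(\partial\Omega,\partial\Omega_t)\approx t$, solves a \emph{Dirichlet} problem for $\mathcal{L}_0$ on $\Omega_t$ with boundary values transported from $u_0$, and then uses the square-function estimate
\[
\int_{\Omega_t} |\nabla^2 v(x)|^2\,\mathrm{dist}(x,\partial\Omega_t)\,dx \le C\,\|v\|_{H^1(\partial\Omega_t)}^2,
\]
which \emph{is} known on $C^{1,\alpha}$ (indeed Lipschitz) domains. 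Since $\Omega$ sits at distance $\ge c\,t$ from $\partial\Omega_t$, this yields $\|\nabla^2 v\|_{L^2(\Omega)}\le C\,t^{-1/2}\|g\|_{L^2(\partial\Omega)}$, together with $\|g-\partial v/\partial\nu_0\|_{H^{-1/2}(\partial\Omega)}\le C\,t^{1/2}\|g\|_{L^2(\partial\Omega)}$ from nontangential estimates. Feeding these into Lemma~\ref{NP-rate-lemma-1} and optimizing $t=c\,\omega$ gives~(\ref{NP-rate-00}). The domain-enlargement trick plus the weighted square-function bound is the substitute for the unavailable global $H^2$ estimate.
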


\begin{proof}
We begin by constructing a family of $C^{1, \alpha}$ domains $\{ \Omega_t: \, t\in (0, 1)\}$
with the property that
(1) $\Omega \subset \Omega_t$, (2) theres exist $C^{1, \alpha}$ diffeomorphisms 
$\Lambda_t: \partial \Omega\to \partial\Omega_t$ with uniform bounds such that
dist$(x, \Lambda_t (x)) \approx \text{dist} (x, \partial\Omega_t )\approx t$
for any $x\in \partial\Omega$.
Let $v=v_t$ be the weak solution to Dirichlet problem:
$\mathcal{L}_0 (v)=0$ in $\Omega_t$ and $v= f_t$ on $\partial\Omega$,
where $f_t(x)=u_0 (\Lambda^{-1}_t (x) )$ for $x\in \partial\Omega_t$.

Next we use the non-tangential maximal function estimates 
$$
\left\{
\aligned
& \| (w)^*\|_{L^2(\partial\Omega_t)} 
\le C\, \| w\|_{L^2(\partial\Omega_t)}, \quad 
\|(\nabla w)^* \|_{L^2 (\partial\Omega_t)} \le C\, \|\nabla_{tan} w \|_{L^2(\partial \Omega_t)},\\
& \|(\nabla w)^* \|_{L^2(\partial\Omega_t)}
\le C\, \|\frac{\partial w}{\partial \nu_0} \|_{L^2(\partial\Omega_t)}
\endaligned
\right.
$$
for the
$L^2$ Dirichlet and Neumann problems for the system $\mathcal{L}_0 (w)=0$ in $C^{1, \alpha}$
domains to control $\| u_0 -v\|_{L^2(\Omega)}$.
This gives
\begin{equation}\label{NP-rate-10}
\aligned
\| u_0 -v\|_{L^2(\Omega)}
&\le C\, \| u_0 -v\|_{L^2(\partial\Omega)}
\le C\,  t\, \| (\nabla v)^*\|_{L^2(\partial\Omega_t)}\\
&\le C\, t\, \|\nabla_{tan} v \|_{L^2(\partial\Omega_t)}
\le C\, t\, \|\nabla u_0\|_{L^2(\partial\Omega)}\\
&\le C\, t\, \| g\|_{L^2(\partial\Omega)}.
\endaligned
\end{equation}
 To handle $\| u_\varep -v\|_{L^2(\Omega)}$,
we use Lemma \ref{NP-rate-lemma-1} to obtain
\begin{equation}\label{NP-rate-20}
\aligned
\| u_\varep -v\|_{L^2(\Omega)}
&\le C\, \| g- g_0\|_{H^{-1/2} (\partial\Omega)}
+C \left\{ \Theta_\sigma (T) +\langle |\psi-\nabla \chi_T |\rangle \right\} \| \nabla^2 v\|_{L^2(\Omega)}\\
&\qquad\qquad\qquad
+ C  \left\{ \big[\Theta_\sigma (T) \big]^{1/2}
+\langle |\psi-\nabla \chi_T |\rangle \right\}\|(\nabla v)^*\|_{L^2(\partial\Omega)},
\endaligned
\end{equation}
where $g_0 =\frac{\partial v}{\partial\nu_0}$.
Since $\mathcal{L}_0 (u_0 -v)=0$ in $\Omega$, we see that
\begin{equation}\label{NP-rate-30}
\aligned
\| g-g_0\|_{H^{-1/2}(\partial\Omega)}
&\le C\, \| u_0 -v\|_{H^1(\Omega)} \le C\, \| u_0 -v\|_{H^{1/2}(\partial\Omega)}\\
&\le C\, \|u_0 -v\|_{L^2(\partial\Omega)}^{1/2} \| u_0 -v\|_{H^1(\partial\Omega)}^{1/2}\\
&\le C\, t^{1/2} \|(\nabla v)^*\|_{L^2(\partial\Omega)}
\le C\, t^{1/2}\, \| g\|_{L^2(\partial\Omega)}.
\endaligned
\end{equation}
Also, since $\mathcal{L}_0 (v)=0$ in $\Omega_t$,
by the square function estimate \cite{Dahlberg-Kenig-Pipher-Verchota},
$$
\left\{ \int_{\Omega_t} |\nabla^2 v (x) |^2 \text{\rm dist} (x, \partial\Omega_t)\, dx \right\}^{1/2}
\le C\, \| v\|_{H^1(\partial\Omega_t)},
$$
we obtain 
\begin{equation}\label{NP-rate-40}
\| \nabla^2 v\|_{L^2(\Omega)} \le C\, t^{-1/2}\, \| g\|_{L^2(\partial\Omega)}.
\end{equation}
In view of (\ref{NP-rate-10})-(\ref{NP-rate-40}) we have proved that
$$
\aligned
\| u_\varep -u_0\|_{L^2(\Omega)}
&\le C\, t^{-1/2} \left\{ \Theta_\sigma (T) +\langle |\psi-\nabla \chi_T|\rangle\right\}  \| g\|_{L^2(\partial\Omega)}\\
&\qquad + C\,  \left\{ \big[\Theta_\sigma (T) \big]^{1/2}
+\langle |\psi-\nabla \chi_T|\rangle\right\}  \| g\|_{L^2(\partial\Omega)}
+ C\, t^{1/2} \| g\|_{L^2(\partial\Omega)}.
\endaligned
$$
Finally, the estimate (\ref{NP-rate-00}) follows by choosing 
$t=c\big\{ \Theta_\sigma (T) +\langle |\psi-\nabla \chi_T|\rangle\big\} $.
\end{proof}

%%%%%%%%%%%%%%%%%%%%%%%%%%%%%%%%%%%%%%%%%%%%%%%%

%%%%%%%%%%%%%%%%%%%%%%%%%%%%%%%%%%%%%%%%%%%%%%%%

\section{A general scheme for Lipschitz estimates at large scale}
\setcounter{equation}{0}

In this section we present a general scheme for proving
Lipschitz estimates at large scale in homogenization.
As we pointed out in Introduction,
the scheme, which was motivated by the compactness method in \cite{AL-1987},
was recently formulated by the first author and C. Smart in \cite{Armstrong-Smart-2014}.
The $L^2$ version of the scheme in this section is a slight
variation of the one given in \cite{Armstrong-Smart-2014}.

\begin{lemma}\label{main-lemma-1}
Let $\{F_0, F_1, \dots, F_\ell\}$ and $\{ p_0, p_1, \dots, p_\ell\}$ be two sequences of nonnegative numbers.
Suppose that for $0\le j\le \ell-1$,
\begin{equation}\label{M-L-1}
p_{j+1} \le p_j + C_0 \max \big\{ F_j, F_{j+1} \big\},
\end{equation}
and for $1\le j\le \ell-1$,
\begin{equation}\label{M-L-2}
F_{j+1} \le \frac12 F_j + \eta_j K +
\eta_j \max \big\{ p_0, \dots, p_{j-1} \big\} +\eta_j \max \big\{ F_0, \dots, F_{j-1}\big \},
\end{equation}
where $K\ge 0$,
$0\le \eta_1 \le \eta_2 \le \cdots \le \eta_{\ell-1}=\eta_\ell$ and $\eta_1 +\eta_2 +\cdots \eta_{\ell} \le C_1$.
Then for $1\le j\le \ell$,
\begin{equation}\label{M-L-3}
p_j \le C \,( K+ p_0 +F_0 +F_1),
\end{equation}
\begin{equation}\label{M-L-4}
F_j \le C\, (2^{-j} + \eta_j ) (K+ p_0 +F_0 +F_1),
\end{equation}
where $C$ depends only on $C_0$ and $C_1$.
\end{lemma}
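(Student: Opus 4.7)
My plan is to proceed in two stages: first obtain the crude a priori bound that $p_j, F_j \le C M$ (where $M := K + p_0 + F_0 + F_1$) with $C$ depending only on $C_0, C_1$, and then feed this back into the $F$-recurrence to extract the geometric decay.

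For the a priori step I want to build a Lyapunov quantity that grows only by a factor $1 + O(\eta_j)$ per step, so that discrete Gronwall yields an $\exp(O(C_1))$ bound over the entire iteration. Setting $P_j := \max_{i \le j} p_i$, $E_j := \max_{i \le j} F_i$ and $\kappa := 2 C_0$, I would introduce
\begin{equation*}
\Phi_j := P_j + \kappa F_j \quad\text{and}\quad \bar\Phi_j := \max_{i \le j} \Phi_i.
\end{equation*}
Combining $P_{j+1} - P_j \le C_0 \max\{F_j, F_{j+1}\}$ with $F_{j+1} \le \tfrac12 F_j + \eta_j (K + P_{j-1} + E_{j-1})$, a short case split on whether $F_{j+1} \ge F_j$ or not shows that, thanks to the cancellation the choice $\kappa = 2 C_0$ buys (the coefficient of $F_j$ becomes non-positive in both cases),
\begin{equation*}
\Phi_{j+1} - \Phi_j \le 3 C_0 \eta_j (K + P_{j-1} + E_{j-1}).
\end{equation*}
Using $P_{j-1} \le \bar\Phi_{j-1}$, $E_{j-1} \le \bar\Phi_{j-1}/\kappa$, and the non-decreasingness of $\bar\Phi$, this becomes $\bar\Phi_{j+1} \le (1 + c \eta_j) \bar\Phi_j + c' \eta_j K$ with $c, c'$ depending only on $C_0$. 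Iterating and using $\sum \eta_j \le C_1$ yields $\bar\Phi_\ell \le e^{c C_1}(\bar\Phi_0 + c' C_1 K) \le C(C_0, C_1) M$, so that $p_j \le CM$ and $F_j \le CM$ for all $j$.

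Once the a priori bound is in place the finer estimate is routine. Since $K + P_{i-1} + E_{i-1} \le CM$, iterating the $F$-recurrence from $F_1$ gives
\begin{equation*}
F_{n+1} \le 2^{-n} F_1 + CM \sum_{i=1}^n 2^{-(n-i)} \eta_i.
\end{equation*}
By monotonicity of $\eta$, $\sum_{k=0}^{n-1} 2^{-k} \eta_{n-k} \le \eta_n \sum_{k\ge 0} 2^{-k} = 2 \eta_n$, which combined with $\eta_n \le \eta_{n+1}$ delivers $F_{n+1} \le C'(2^{-(n+1)} + \eta_{n+1}) M$, as required.

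The hard part is the design of the Lyapunov $\Phi_j$ and the verification of the unconditional one-step inequality $\Phi_{j+1} - \Phi_j \le O(\eta_j)(K + P_{j-1} + E_{j-1})$. A naive simultaneous induction on both target bounds fails (each application of the $F$-recurrence contributes a multiplicative constant $>1$), and summing the hypotheses to bound $\sum F_j$ only closes under the unwarranted restriction $(2C_0+1)C_1 < \tfrac12$. The specific weighting $\kappa = 2 C_0$ is read off from the continuous-time analogue $F' = -\tfrac12 F + \eta(p+F)$, $p' = C_0 F$, whose slow (unstable) eigenvalue is $2 C_0 \eta + o(\eta)$ and whose unstable eigenvector satisfies $F/p \sim 2\eta$: the linear form $p + 2 C_0 F$ is orthogonal to the fast decaying eigenvector and therefore grows only at the $O(\eta)$ rate needed to close the Gronwall argument.
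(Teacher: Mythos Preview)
Your argument is correct and takes a genuinely different route from the paper's. The paper first extracts the geometric decay \emph{before} any a priori bound: it sets $T_j := F_j - 2\eta_j\max\{p_0,\dots,p_{j-1}\} - 2\eta_j\max\{F_0,\dots,F_{j-1}\}$ and checks directly (using the monotonicity of $\eta_j$) that $T_{j+1}\le \tfrac12 T_j$, yielding $F_j \le 2^{1-j}F_1 + 2\eta_j(P_{j-1}+E_{j-1})$. It then runs a product-form induction with factors $(1+2\eta_1)\cdots(1+2\eta_j)$ to absorb the $E_{j-1}$ term, and a second such induction using \eqref{M-L-1} to bound $p_j$. Your approach inverts the order: you first secure the crude bound $P_j,E_j\le CM$ via the Lyapunov quantity $\Phi_j=P_j+2C_0F_j$ and discrete Gronwall, and only then iterate the $F$-recurrence to get the decay. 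Your scheme is arguably more transparent and better motivated (the continuous-time heuristic for $\kappa=2C_0$ is a nice touch); the paper's $T_j$ substitution is slicker but more ad hoc. One small remark: your one-step inequality $\Phi_{j+1}-\Phi_j\le 3C_0\eta_j(\cdots)$ relies on \eqref{M-L-2}, which is only assumed for $j\ge 1$, so the Gronwall iteration should start from $\bar\Phi_1$ rather than $\bar\Phi_0$; since $\bar\Phi_1\le (1+3C_0)M$ by \eqref{M-L-1} alone, this is immaterial.
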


\begin{proof}
The proof of this lemma is essentially contained in the proof of \cite[Lemma 5.1]{Armstrong-Smart-2014}.
We provide a proof here for the sake of completeness.

By considering $\widetilde{p}_j =p_j +K$, we may assume that $K=0$.
Let 
\begin{equation}
T_j =F_j -2 \eta_j \max \big\{ p_0, \dots, p_{j-1}\big\}  -2 \eta_j \max \big\{ F_0, \cdots, F_{j-1} \big\}.
\end{equation}
Note that 
$$
\aligned
T_{j+1} &= F_{j+1} -2 \eta_{j+1} \max\big\{ p_0, \dots, p_j\big\} -2 \eta_{j+1} \max\big \{ F_0, \dots, F_j\big\}\\
&\le \frac12 F_j + \eta_j \max \big\{ p_0, \dots, p_{j-1} \big\} +\eta_j \max \big\{ F_0, \dots, F_{j-1} \big\}\\
&\qquad\qquad\qquad
 -2 \eta_{j+1} \max\big\{ p_0, \dots, p_j\big\} -2 \eta_{j+1} \max\big \{ F_0, \dots, F_j\big\}\\
 &\le  \frac12 F_j
 +(\eta_j -2\eta_{j+1})\max \big\{p_0, \dots, p_{j-1}\big\}
 +(\eta_j -2\eta_{j+1})\max \big\{ F_0, \cdots, F_{j-1}\big\},
\endaligned
$$
where we have used (\ref{M-L-2}) for the first inequality.
Since $\eta_j -2\eta_{j+1} \le -\eta_j$, we obtain $T_{j+1} \le (1/2) T_j $ for $1\le j\le\ell-1$.
It follows that $T_j \le (1/2)^{j-1} T_1\le (1/2)^{j-1} F_1$. Hence,
\begin{equation}\label{M-L-5}
F_j \le (1/2)^{j-1} F_1
+2 \eta_j \max \big\{ p_0, \dots, p_{j-1}\big\}  +2 \eta_j \max \big\{ F_0, \cdots, F_{j-1} \big\}.
\end{equation}

Next we will show that for $0\le j\le \ell$,
\begin{equation}\label{M-L-7}
F_j \le C_2 \bigg\{ (2^{-j} +\eta_j )(F_0+F_1) +\eta_j \max \big\{ p_0, \dots, p_{j-1} \big\}\bigg\},
\end{equation}
where $C_2 $ depends only on $C_1$. To prove (\ref{M-L-7}), we claim that
for $1\le j\le \ell$,
\begin{equation}\label{M-L-9}
F_j \le 2 (1+2\eta_1) \cdots (1+2\eta_{j} )
\bigg\{ (2^{-j} +\eta_j)(F_0+   F_1) + \eta_j \max \big\{ p_0, \dots, p_{j-1} \big\} \bigg\}.
\end{equation}
Since $\eta_1 +\eta_2 +\cdots \eta_{\ell} \le C_1$, one may use the inequality
$\ln (1+x)\le x$ for $x\ge 0$ to see that
\begin{equation}\label{M-L-10}
(1+C\eta_1) \cdots (1+C\eta_{\ell} )\le e^{CC_1}.
\end{equation}
As a result, estimate (\ref{M-L-7}) follows from (\ref{M-L-9}).

Estimate (\ref{M-L-9}) is proved by induction, using (\ref{M-L-5}).
Indeed, suppose (\ref{M-L-9}) holds for $1\le j\le i$. 
Then
$$
\max \big\{F_0, \dots, F_i\big\}
\le 2(1+2\eta_1)\cdots (1+2\eta_{i} )
\bigg\{ (\frac12 +\eta_i)(F_0+F_1) +\eta_i\max \big\{ p_0, \dots, p_{i-1}\big\} \bigg\},
$$
where we have used the monotonicity of $\eta_j$.
This, together with (\ref{M-L-5}), gives
$$
\aligned
F_{i+1} & \le (1/2)^i F_1 +2\eta_{i+1} \max \big\{ p_0, \dots, p_i\big\}
+2\eta_{i+1} \max \big\{ F_0, \dots, F_i\big \}\\
&\le (1/2)^i F_1 +2\eta_{i+1} \max \big\{ p_0, \dots, p_i\big\}\\
&\qquad+2\eta_{i+1}
\cdot
2(1+2\eta_1)\cdots (1+2\eta_{i} )
 (\frac12 +\eta_i)(F_0+ F_1)\\
& \qquad +2\eta_{i+1}
\cdot
2(1+2\eta_1)\cdots (1+2\eta_{i} ) \cdot \eta_i\max \big\{ p_0, \dots, p_{i-1}\big\}\\
&\le 2(1+2\eta_1)\cdots (1+2\eta_{i+1})
\bigg\{ (2^{-i-1} +\eta_{i+1})(F_0 +F_1)
+\eta_{i+1} \max\big \{ p_0, \dots, p_i\big\} \bigg\} .
\endaligned
$$

Finally, we give the proof for estimate  (\ref{M-L-3}), which, together with (\ref{M-L-7}), yields (\ref{M-L-4}).
To this end we use (\ref{M-L-1}) and (\ref{M-L-7}) to obtain
$$
\aligned
p_{j+1}
&\le p_j + C_0 \max \{ F_j, F_{j+1} \}\\
& \le p_j +C \, (2^{-j-1} +\eta_{j+1} )(F_0 +F_1)
+C \, \eta_{j+1} \max \big\{ p_0, \dots, p_{j} \big\}\\
&\le (1+C\eta_{j+1} )\max \big\{ p_0,\dots, p_j\big\}
+C\,  (2^{-j-1} +\eta_{j+1}) (F_0 +F_1),
\endaligned
$$
where $C$ depends only on $C_0$ and $C_1$.
By a simple induction argument it follows
\begin{equation}
p_j \le  (1+C\eta_1)\cdots (1+C\eta_j)
\bigg\{ p_0 +F_0 +F_1 +C\, \sum_{k=1}^j (2^{-k} +\eta_k) (F_0 +F_1) \bigg\},
\end{equation}
where $C$ depends only on $C_0$ and $C_1$.
In view of (\ref{M-L-10}) this gives the desired estimate (\ref{M-L-3}).
The proof is now complete.
\end{proof}

\begin{theorem}\label{g-theorem-1}
Let $B_r=B(0,r)$ and
 $u\in L^2(B_1; \mathbb{R}^m)$. Let  $0<\varep<1/4$.
Suppose that for each $r\in (\varep, 1/4)$, there exists $w=w_r\in L^2(B_{r}; \mathbb{R}^m)$ such that
\begin{equation}\label{g-t-1-1}
\left\{ \average_{B_r} | u-w|^2\right\}^{1/2}
\le \eta (\varep/r) \left\{ \inf_{q\in \mathbb{R}^m}
\left(\average_{B_{2r}} |u-q |^2\right)^{1/2} + r\, K \right\},
\end{equation}
and
\begin{equation}\label{g-t-1-2}
\aligned
\frac{1}{\theta } 
& \inf_{\substack{ M\in \mathbb{R}^{m\times d}\\ 
q\in \mathbb{R}^m}} \left(\average_{B_{\theta r}} |w(x)-M x-q|^2\, dx \right)^{1/2}\\
&\qquad\qquad \le \frac{1}{2}
\inf_{\substack{ M\in \mathbb{R}^{m\times d}\\ 
q\in \mathbb{R}^m}}
  \left(\average_{B_{r}} |w(x)- M x-q|^2 \, dx \right)^{1/2},
 \endaligned
\end{equation}
where $K\ge 0$, $\theta\in (0,1/4)$, and $\eta (t)$ is a nondecreasing function on $(0,1]$. Assume that
\begin{equation}\label{g-t-1-3}
I=\int_0^1 \frac{\eta (t)}{t}\, dt<\infty.
\end{equation}
Then, for $\varep<t< (1/4)$,
\begin{equation}\label{g-t-1-4}
\frac{1}{t} \inf_{q\in \mathbb{R}^m}
\left\{ \average_{B_t} |u-q|^2\right\}^{1/2}
\le C \,  \left\{  K+\left(\average_{B_1} |u|^2\right)^{1/2} \right\},
\end{equation}
and
\begin{equation}\label{g-t-1-5}
\aligned
\frac{1}{t} \inf_{\substack{ M\in \mathbb{R}^{m\times d}\\ 
q\in \mathbb{R}^m}} &
\left\{ \average_{B_t} |u(x)-M x-q|^2 \, dx\right\}^{1/2}\\
&\qquad \le C \, \big\{  t^\alpha +\eta (\varep /t)\big\} 
\left\{  K+\left( \average_{B_1} |u|^2\right)^{1/2}\right\},
\endaligned
\end{equation}
where $\alpha=\alpha(\theta)>0$ and
$C$ depends only on $d$, $m$, $\theta$, and $I$.
\end{theorem}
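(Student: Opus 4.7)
The plan is to apply Lemma~\ref{main-lemma-1} along a geometric sequence of scales. Fix $r_0$ slightly less than $1/4$ and set $r_j := \theta^j r_0$, choosing $\ell$ to be the largest integer with $r_\ell > \varepsilon$. For each $0\le j\le \ell$ let $(M_j,q_j)$ be a minimizer of $\inf_{M,q}(\average_{B_{r_j}} |u-Mx-q|^2)^{1/2}$, and define
$$F_j := \frac{1}{r_j}\Bigl( \average_{B_{r_j}} |u - M_j x - q_j|^2\Bigr)^{1/2},\qquad p_j := |M_j|.$$
Standard affine least-squares gives $p_0, F_0, F_1 \le C(\average_{B_1}|u|^2)^{1/2}$ (for $p_0$, compare $M_0x+q_0$ to $0$ and use $(\average_{B_{r_0}}|M_0 x|^2)^{1/2}=c_d |M_0| r_0$).

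To verify (\ref{M-L-1}), notice that both $M_j x + q_j$ and $M_{j+1} x + q_{j+1}$ compete on $B_{r_{j+1}} \subset B_{r_j}$; the triangle inequality applied to $(\average_{B_{r_{j+1}}}|(M_j-M_{j+1})x+(q_j-q_{j+1})|^2)^{1/2}$ together with $\average_{B_\rho}|Mx+q|^2 = |q|^2 + c_d^2 |M|^2\rho^2$ yields $|M_j-M_{j+1}|\, r_{j+1} \le C(r_{j+1}F_{j+1} + r_j F_j)$, giving (\ref{M-L-1}) with $C_0=C_0(d,\theta)$.

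The core step is the flatness recursion (\ref{M-L-2}). At scale $r_j$ apply the hypothesis to produce $w=w_{r_j}$. The quantity $\inf_q(\average_{B_{2r_j}}|u-q|^2)^{1/2}$ in (\ref{g-t-1-1}) is controlled by choosing $q=q_{j-1}$: since $\theta<1/4$ one has $B_{2r_j}\subset B_{r_{j-1}}$, hence
$$\inf_q \Bigl(\average_{B_{2r_j}}|u-q|^2\Bigr)^{1/2} \le C r_j (F_{j-1}+p_{j-1}),$$
so $(\average_{B_{r_j}}|u-w|^2)^{1/2}\le C\eta(\varepsilon/r_j)\,r_j(K+F_{j-1}+p_{j-1})$. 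Applying (\ref{g-t-1-2}) to $w$ at scale $r_j$, then exchanging $w\leftrightarrow u$ at scales $r_j$ and $r_{j+1}=\theta r_j$ (the latter exchange contributes a factor $\theta^{-d/2}$ from $B_{r_{j+1}}\subset B_{r_j}$, absorbed into the constant), gives
$$F_{j+1}\le \tfrac12 F_j + C\eta(\varepsilon/r_j)(K+F_{j-1}+p_{j-1}),$$
which satisfies (\ref{M-L-2}) with $\eta_j := C\eta(\varepsilon/r_j)$ (stronger, since the right side uses $\max\{p_0,\ldots,p_{j-1}\}$). Monotonicity of $\eta_j$ in $j$ is immediate, and since $\varepsilon/r_j = \varepsilon\theta^{-j}/r_0$ is geometrically spaced in $(0,1]$, the monotonicity of $\eta$ yields $\eta_j \le (\log\theta^{-1})^{-1}\int_{\varepsilon/r_{j-1}}^{\varepsilon/r_j} \eta(s)/s\,ds$, so $\sum_j \eta_j \le C_1 := CI$.

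Lemma~\ref{main-lemma-1} then delivers $p_j\le C(K+p_0+F_0+F_1)$ and $F_j\le C(2^{-j}+\eta_j)(K+p_0+F_0+F_1)$ for $1\le j\le \ell$. For arbitrary $t\in(\varepsilon,1/4)$, pick $j$ with $r_{j+1}\le t\le r_j$; since $B_t\subset B_{r_j}$ and $r_j/t\le\theta^{-1}$, comparing the scale-$t$ infima to the minimizers $(M_j,q_j)$ converts the bounds on $p_j,F_j$ into (\ref{g-t-1-4}) and into
$$\frac{1}{t}\inf_{M,q}\Bigl(\average_{B_t}|u-Mx-q|^2\Bigr)^{1/2}\le C F_j \le C(2^{-j}+\eta_j)(K+p_0+F_0+F_1).$$
Writing $2^{-j}\asymp(r_j/r_0)^\alpha\le C t^\alpha$ with $\alpha=\log 2/\log\theta^{-1}$, and $\eta_j=C\eta(\varepsilon/r_j)\le C\eta(\varepsilon/t)$ by monotonicity of $\eta$, we obtain (\ref{g-t-1-5}). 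The main obstacle is bookkeeping for the off-by-one between the scales at which $w$ and $u$ are compared — the approximation error in (\ref{g-t-1-1}) is quantified by the flatness one scale \emph{larger} than $r_j$, so the recursion naturally produces $F_{j-1}$ and $p_{j-1}$ on the right of (\ref{M-L-2}), which is exactly the form Lemma~\ref{main-lemma-1} is designed to absorb.
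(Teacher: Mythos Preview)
Your proof is correct and follows essentially the same approach as the paper: define the flatness $F_j$ and slope $p_j$ along the geometric sequence of radii, verify the two recursions required by Lemma~\ref{main-lemma-1} using hypotheses (\ref{g-t-1-1})--(\ref{g-t-1-2}), check the summability of $\eta_j$ via the Dini condition (\ref{g-t-1-3}), and then interpolate to arbitrary $t$. One small slip: in your bound $\eta_j \le (\log\theta^{-1})^{-1}\int_{\varepsilon/r_{j-1}}^{\varepsilon/r_j} \eta(s)\,ds/s$, the interval should be $[\varepsilon/r_j,\varepsilon/r_{j+1}]$ (so that $\eta(s)\ge\eta(\varepsilon/r_j)$ by monotonicity), but this is a trivial index shift and the summation argument goes through unchanged.
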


\begin{proof}
It follows from the assumptions (\ref{g-t-1-2}) and (\ref{g-t-1-1}) that for $r\in (\varep, 1/2)$,
\begin{equation}\label{g-t-1-6}
\aligned
& \frac{1}{\theta r}
\inf_{\substack{ M\in \mathbb{R}^{m\times d}\\ 
q\in \mathbb{R}^m}}
 \left( \average_{B_{\theta r}} | u-Mx-q|^2\, dx\right)^{1/2} \\
& \le \frac{C}{r} \left\{\average_{B_r} |u-w_r|^2 \right\}^{1/2}
+\frac{1}{2r} 
\inf_{\substack{ M\in \mathbb{R}^{m\times d}\\ 
q\in \mathbb{R}^m}}
 \left(\average_{B_{r}} | u-M x-q|^2\, dx\right)^{1/2} \\
&\le C\, \eta(\varep/r)
\left\{ \frac{1}{2r} \inf_{q\in \mathbb{R}^m}
\left(\average_{B_{2r}} |u-q|^2 \right)^{1/2} +K \right\}\\
&\qquad\qquad\qquad
+\frac{1}{2r}  \inf_{\substack{ M\in \mathbb{R}^{m\times d}\\ 
q\in \mathbb{R}^m}}
 \left( \average_{B_{ r}} | u-Mx-q|^2\, dx\right)^{1/2}.
\endaligned
\end{equation}
Let $r_j =\theta^{j+1}$ for $0\le j\le \ell$, where $\ell$ is chosen so that $\theta^{\ell+2}<\varep\le \theta^{\ell+1}$
(we may assume that $\varep<\theta$).
Let
\begin{equation}\label{g-t-1-7}
\aligned
F_j  &=\frac{1}{r_j}
\inf_{\substack{ M\in \mathbb{R}^{m\times d}\\ 
q\in \mathbb{R}^m}}
\left( \average_{B_{r_j}} |u-Mx-q|^2\, dx \right)^{1/2}  \\
&=\frac{1}{r_j}
\inf_{q\in \mathbb{R}^m}
 \left(\average_{B_{r_j}} |u-M_j x -q|^2\, dx \right)^{1/2}
\endaligned
\end{equation}
and $p_j =|M_j|$.
Note that by (\ref{g-t-1-6}), 
\begin{equation}\label{g-t-8}
\aligned
F_{j+1} 
&\le \frac12 F_j + C \, \eta(\varep \theta^{-j-1})
\left\{ \frac{1}{2r_j} \inf_{q\in \mathbb{R}^m}
\left( \average_{B_{2r_j}} |u-q|^2\right)^{1/2} +K \right\}\\
& \le \frac12 F_j 
+ C\,  \eta(\varep \theta^{-j-1}) 
\big\{ K+F_{j-1} +p_{j-1} \big\}.
\endaligned
\end{equation}
Also observe that
$$
\aligned
& |M_{j+1} -M_j|
\le \frac{C}{r_{j+1}} \inf_{q\in \mathbb{R}^m}
\left\{ \average_{B_{r_{j+1}}} |(M_{j+1} -M_j) x-q |^2\, dx \right\}^{1/2}\\
&\le \frac{C}{r_{j+1}}
\inf_{q\in \mathbb{R}^m}
\left\{ \average_{B_{r_{j+1}}} |u- M_{j+1}x-q|^2 \right\}^{1/2}
+
\frac{C}{r_{j+1}}\inf_{q\in \mathbb{R}^m}
\left\{ \average_{B_{r_{j+1}}} |u- M_{j}x-q|^2 \right\}^{1/2}\\
& \le  C \, ( F_j +F_{j+1} ).
\endaligned
$$
This gives
\begin{equation}
p_{j+1} =|M_{j+1}|\le |M_j| + C\, (F_j + F_{j+1})
 = p_j +C\, (F_j + F_{j+1}).
\end{equation}
We further note that
$$
\sum_{j=0}^{\ell-1} \eta(\varep\theta^{-j-1})
\le \frac{1}{\ln (1/\theta)} \int_0^1 \frac{\eta (t)}{t}\, dt<\infty.
$$
Thus the sequences $\{ F_0, F_1, \dots, F_\ell \}$ and $\{ p_0, p_1, \dots, p_\ell\}$
satisfy the conditions in Lemma \ref{main-lemma-1}.
Consequently, we obtain
\begin{equation}
\aligned
F_j &\le C \, (2^{-j} +\eta(\varep \theta^{-j-1}) )
(K+ p_0 +F_0 +F_1)\\
&\le C \, (2^{-j} +\eta(\varep \theta^{-j-1}) )
\left\{  K + \left(\average_{B_1} |u|^2\right)^{1/2} \right\},
\endaligned
\end{equation}
and
\begin{equation}
\aligned
p_j  & \le C\, (  K + p_0 +F_0 +F_1)\\
&\le C 
\left\{  K + \left(\average_{B_1} |u|^2\right)^{1/2} \right\}.
\endaligned
\end{equation}

Finally, given any $t\in (\varep, \theta)$ (the case $t\ge \theta$ is trivial),
 we choose $j\ge 0$ so that $\theta^{j+2} < t\le \theta^{j+1} $.
 Then
 $$
 \aligned
 \frac{1}{t} 
 \inf_{\substack{ M\in \mathbb{R}^{m\times d}\\ 
q\in \mathbb{R}^m}} &
\left\{ \average_{B_t} |u-M x-q|^2 \right\}^{1/2} \le C\, F_j \\
&\le C \, \big\{ 2^{-j} +\eta(\varep \theta^{-j-1}) \big\}
\left\{  K + \left(\average_{B_1} |u|^2\right)^{1/2} \right\}\\
&\le C \big\{ t^\alpha +\eta(\varep/t)\big\}
\left\{ K + \left(\average_{B_1} |u|^2\right)^{1/2} \right\},
 \endaligned
 $$
 where $\alpha =\alpha (\theta)>0$,
and
 $$
 \aligned
 \frac{1}{t}
 \inf_{q\in \mathbb{R}^m}
 \left\{ \average_{B_t} |u-q|^2\right\}^{1/2}
 &\le \frac{C}{r_j} \inf_{q\in \mathbb{R}^m}
 \left\{ \average_{B_{r_j}} |u-q|^2\right\}^{1/2}\\
 & \le C\,\big\{ F_j +p_j\}\\
 & \le C \left\{  K + \left(\average_{B_1} |u|^2\right)^{1/2} \right\}.
 \endaligned
 $$
 This completes the proof.
\end{proof}

\begin{remark}
{\rm 
The $L^2$ norm plays no role in the proof above.
Theorem \ref{g-theorem-1} continues to hold if one replaces the $L^2$ average over $B_r$
by the $L^p$ average over $B_r$ for any $1\le p< \infty$ or by the $L^\infty$ norm over $B_r$.
}
\end{remark}

In the next section we will use Theorem \ref{g-theorem-1} 
to establish uniform interior Lipschitz estimates for $\mathcal{L}_\varep$.
The function $w=w_r(x)$  will be a suitably chosen solution of $\mathcal{L}_0 (w)=0$ in $B_r$.
Since the homogenized operator $\mathcal{L}_0$ has constant coefficients,
its solutions possess $C^{1, \alpha}$ estimates that make (\ref{g-t-1-2}) possible.
As we shall see in Sections 7 and 8, with our results on convergence rates in Section 2,
this approach for the interior  Lipschitz estimates may be adapted for boundary Lipschitz estimates
with either Dirichlet or Neumann conditions.

%%%%%%%%%%%%%%%%%%%%%%%%%%%%%%%%%%%%%%%%%%%%%%%%%%%%%

%%%%%%%%%%%%%%%%%%%%%%%%%%%%%%%%%%%%%%%%%%%%%%%%%

\section{Interior Lipschitz estimates}
\setcounter{equation}{0}

In this section we establish the uniform Lipschitz estimates
for $\mathcal{L}_\varep=-\text{\rm div } \big( A(x/\varep)\nabla \big)$.
Our approach is based on Theorem \ref{g-theorem-1}.
The key ingredients are provided by the next three lemmas.

\begin{lemma}\label{i-L-lemma-1}
Let $B_r=B(0, r)$.
Suppose that $u_\varep\in H^1 (B_{2r}; \mathbb{R}^m)$ and $\mathcal{L}_\varep (u_\varep) =0$ in $B_{2r}$
for some $0<\varep<r<1$. Then there exists $w\in H^1(B_r; \mathbb{R}^m)$ such that
$\mathcal{L}_0 (w)=0$ in $B_r$ and
\begin{equation}\label{i-L-1-00}
\left\{ \average_{B_r} |u_\varep -w|^2 \right\}^{1/2}
\le C_\delta\, \big[ \omega(\varep/r)\big]^{\frac23 -\delta}
\inf_{q\in \mathbb{R}^m}
\left\{ \average_{B_{2r}} |u_\varep -q|^2\right\}^{1/2}
\end{equation}
for any $\delta\in (0,1/4)$, where $\omega (t)=\omega_\sigma (t)$ is defined by (\ref{omega}). The constant 
$C_\delta$ depends only on $\delta$, $\sigma$, and $A$.
\end{lemma}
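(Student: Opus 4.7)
The strategy is to combine Theorem~\ref{rate-theorem-1} with the boundary H\"older estimate~\eqref{H-estimate} via a mollification argument, trading the $C^{1,\gamma}$ boundary regularity required by the former against the small loss $\delta$ appearing in~\eqref{i-L-1-00}. After the rescaling $\tilde u(y):=u_\varep(ry)$ with $\tilde\varep:=\varep/r\in(0,1)$, we reduce to $r=1$; subtracting a constant, we may assume $\inf_q\|\tilde u-q\|_{L^2(B_2)}=\|\tilde u\|_{L^2(B_2)}$. The uniform interior H\"older estimate for $\mathcal{L}_{\tilde\varep}$ (which follows from~\eqref{H-estimate} applied to a local Dirichlet problem on an enlarged ball) then gives $\|\tilde u\|_{C^\beta(\bar B_{5/4})}\le C_\beta\|\tilde u\|_{L^2(B_2)}$ for every $\beta\in(0,1)$.

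Next fix $\beta\in(0,1)$, $\gamma\in(0,\beta)$, and $\eta\in(0,1/16)$. With $\chi\in C_c^\infty(B_{5/4})$ equal to $1$ on $\bar B_{9/8}$ and $\phi_\eta$ a standard mollifier, set $\tilde u^{(\eta)}:=(\chi\tilde u)*\phi_\eta$, $f:=\tilde u|_{\partial B_1}$, and $f^{(\eta)}:=\tilde u^{(\eta)}|_{\partial B_1}$. Routine mollification bounds combined with the $C^\beta$ estimate yield, for the chosen $\beta,\gamma$,
\begin{equation*}
\|f-f^{(\eta)}\|_{C^\gamma(\partial B_1)}\le C\eta^{\beta-\gamma}\|\tilde u\|_{L^2(B_2)},\qquad \|f^{(\eta)}\|_{C^{1,\gamma}(\partial B_1)}\le C\eta^{\beta-1-\gamma}\|\tilde u\|_{L^2(B_2)}.
\end{equation*}
Let $w$ be the $\mathcal{L}_0$-Dirichlet solution in $B_1$ with data $f$ (the desired approximant), and let $U_\eta,W_\eta$ be the respective Dirichlet solutions of $\mathcal{L}_{\tilde\varep}(U_\eta)=0$ and $\mathcal{L}_0(W_\eta)=0$ in $B_1$ with data $f^{(\eta)}$; by uniqueness, $\tilde u|_{B_1}$ is itself the $\mathcal{L}_{\tilde\varep}$-Dirichlet solution with data $f$. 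Theorem~\ref{rate-theorem-1} applied on $B_1$ to the pair $(U_\eta,W_\eta)$ gives
\begin{equation*}
\|U_\eta-W_\eta\|_{L^2(B_1)}\le C[\omega(\tilde\varep)]^{2/3}\|f^{(\eta)}\|_{C^{1,\gamma}(\partial B_1)}\le C[\omega(\tilde\varep)]^{2/3}\eta^{\beta-1-\gamma}\|\tilde u\|_{L^2(B_2)},
\end{equation*}
while~\eqref{H-estimate}, applied separately to $\tilde u-U_\eta$ and $w-W_\eta$ (each solving its homogeneous equation in $B_1$ with boundary data $f-f^{(\eta)}$), gives $\|\tilde u-U_\eta\|_{L^\infty(B_1)}+\|w-W_\eta\|_{L^\infty(B_1)}\le C\eta^{\beta-\gamma}\|\tilde u\|_{L^2(B_2)}$. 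The triangle inequality together with the balancing choice $\eta:=[\omega(\tilde\varep)]^{2/3}$ then yields $\|\tilde u-w\|_{L^2(B_1)}\le C[\omega(\tilde\varep)]^{2(\beta-\gamma)/3}\|\tilde u\|_{L^2(B_2)}$.

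Given the $\delta\in(0,1/4)$ appearing in~\eqref{i-L-1-00}, we finally choose $\beta$ close to $1$ and $\gamma$ small so that $2(\beta-\gamma)/3\ge 2/3-\delta$; undoing the rescaling gives~\eqref{i-L-1-00}. The only non-routine item is the verification of the two mollification bounds above, which is a direct computation using the $C^\beta$ regularity of $\tilde u$ and the cancellation $\int \nabla^k\phi_\eta=0$ for $k\ge 1$; beyond this, the argument is a mere juxtaposition of the Dirichlet convergence rate~\eqref{rate-2} with the boundary H\"older estimate~\eqref{H-estimate}. The conceptual point is that these two ingredients already suffice to construct a homogenized approximant at the interior scale without recourse to correctors, which is precisely the philosophy underlying the general scheme of Section~3.
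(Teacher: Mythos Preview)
Your proof is correct and follows essentially the same approach as the paper's: rescale, use the uniform interior H\"older estimate to mollify the boundary data, apply Theorem~\ref{rate-theorem-1} to the mollified Dirichlet problems, control the remainder via the H\"older estimate~\eqref{H-estimate}, and balance the mollification parameter against $[\omega(\varep/r)]^{2/3}$. The only cosmetic difference is that the paper takes $w$ to be the $\mathcal{L}_0$-solution with the \emph{mollified} data (on $B_{5/4}$), whereas you take $w$ with the unmollified data $f=\tilde u|_{\partial B_1}$ and therefore need one extra triangle-inequality step comparing $w$ with $W_\eta$; this is harmless since $\mathcal{L}_0$ has constant coefficients.
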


\begin{proof}
By a simple rescaling we may assume that $r=1$.
By subtracting a constant we may also assume $ \int_{B_2} u_\varep=0$.
Let $f_t =u_\varep * \varphi_t$, where $\varphi_t (x)=t^{-d} \varphi (x/t)$,
$\varphi\in C_0^\infty(B_1)$ and
$\int_{\mathbb{R}^d} \varphi =1$. Since $\mathcal{L}_\varep (u_\varep)=0$ in $B_2$,
using the interior H\"older estimate for $\mathcal{L}_\varep$,
$$
\| u_\varep\|_{C^\beta (B_{7/4})}\le C_\beta\,  \| u_\varep \|_{L^2(B_2)} \qquad \text{ for any } 0<\beta<1
$$
(see Theorem 3.4 in \cite{Shen-2014}),
it is easy to see that 
\begin{equation}\label{i-L-1-10}
\| f_t- u_\varep \|_{C^{ \alpha} (B_{3/2})}\le C_{\alpha, \beta} \, t^{\beta-\alpha} \,\| u_\varep\|_{L^2(B_2)}
\end{equation}
and  
\begin{equation}\label{i-L-1-11}
\|  f_t \|_{C^{1, \alpha} (B_{3/2})}
\le C_{\alpha, \beta} \, t^{\beta-\alpha-1} \, \| u_\varep \|_{L^2(B_2)},
\end{equation}
where $t\in (0,1/4)$ and $0<\alpha<\beta<1$.
We now solve the Dirichlet problems
\begin{equation}
\left\{
\aligned
\mathcal{L}_\varep (v_\varep) & =0 & \quad & \text{ in } B_{5/4},\\
v_\varep &=f_t &\quad &\text{ on } \partial B_{5/4},
\endaligned
\right.
\quad 
\text{ and }
\quad
\left\{
\aligned
\mathcal{L}_0 (w) & =0 & \quad & \text{ in } B_{5/4},\\
w &=f_t &\quad &\text{ on } \partial B_{5/4},
\endaligned
\right.
\end{equation}
where $t\in (0,1/4)$ is to be determined.
Since $\mathcal{L}_\varep (u_\varep -v_\varep)=0$
 in $B_{5/4}$, it follows from (\ref{H-estimate}) and (\ref{i-L-1-10}) that
\begin{equation}\label{i-L-1-1}
\| u_\varep -v_\varep\|_{L^\infty(B_{5/4})}
\le C_\alpha\, \| u_\varep -f_t\|_{C^\alpha(\partial B_{5/4})}
\le C_{\alpha, \beta} \, t^{\beta-\alpha} \, \| u_\varep\|_{L^2(B_2)}.
\end{equation}
Also, observe that by Theorem \ref{rate-theorem-1},
\begin{equation}\label{i-L-1-2}
\aligned
\| v_\varep-w\|_{L^2(B_{5/4})}
& \le C_\alpha \, \big[\omega (\varep )\big]^{2/3} \, \| f_t \|_{C^{1, \alpha}(\partial B_{5/4})}\\
& \le C_{\alpha, \beta}\, t^{\beta-\alpha-1} \, \big[\omega(\varep)\big]^{2/3}\, \| u_\varep\|_{L^2(B_2)}.
\endaligned
\end{equation}
In view of (\ref{i-L-1-1}) and (\ref{i-L-1-2}) we obtain 
\begin{equation}
\aligned
\| u_\varep -w\|_{L^2(B_1)}
&\le \| u_\varep -v_\varep\|_{L^2(B_1)} +\| v_\varep-w\|_{L^2(B_1)}\\
&\le C_{\alpha, \beta}\, t^{\beta-\alpha}
\big\{ 1+ t^{-1} \big[\omega (\varep)\big]^{2/3}  \big\} \| u_\varep \|_{L^2(B_2)}.
\endaligned
\end{equation}
We now choose $t= c\, \big[\eta (\varep)\big]^{2/3} \in (0, 1/4)$, $\alpha=(3/4)\delta$, and
$\beta=1-\alpha$, where $\delta \in (0,1/4)$. This gives
\begin{equation}
\aligned
\| u_\varep -w\|_{L^2(B_1)}
&\le C_{\delta} \, \big[ \omega(\varep) \big]^{\frac23-\delta} \| u_\varep\|_{L^2(B_2)}\\
&\le C_{\delta} \, \big[ \omega(\varep) \big]^{\frac23-\delta}
\inf_{q\in \mathbb{R}^m}
\left\{ \average_{B_2} |u_\varep -q|^2\right\}^{1/2},
\endaligned
\end{equation}
where we have used the fact $\int_{B_2} u_\varep =0$ for the last inequality.
\end{proof}

\begin{lemma}\label{i-L-lemma-0}
Suppose $w\in H^1(B_r; \mathbb{R}^m)$ and $\mathcal{L}_0 (w)=0$ in $B_r$, where $B_r=B(0,r)$.
Then, for any $\theta\in (0,1/2)$,
\begin{equation}\label{g-t-interior-1}
\frac{1}{\theta } \inf_{\substack{ M\in \mathbb{R}^{m\times d} \\ q\in \mathbb{R}^m}}
\left\{ \average_{B_{\theta r}} |w-Mx -q|^2\right\}^{1/2}
\le {C\theta}{ } \inf_{\substack{ M\in \mathbb{R}^{m\times d} \\ q\in \mathbb{R}^m}}
\left\{ \average_{B_{ r}} |w-Mx -q|^2\right\}^{1/2},
\end{equation}
where $C$ depends only on $d$, $m$, and $\mu$.
As a result, by choosing $\theta$ so small that $C\theta<(1/2)$,
solutions of $\mathcal{L}_0 (w)=0$ in $B_r$
satisfy the condition (\ref{g-t-1-2}) 
in Theorem \ref{g-theorem-1}.
\end{lemma}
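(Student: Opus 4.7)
The plan is to exploit the fact that $\mathcal{L}_0 = -\mathrm{div}(\widehat{A}\nabla\cdot)$ has constant coefficients. By classical interior regularity, any weak solution $w$ of $\mathcal{L}_0(w)=0$ is smooth in $B_r$ and satisfies the quantitative estimate
\[
\|\nabla^2 w\|_{L^\infty(B_{r/2})} \le \frac{C}{r^2}\left(\average_{B_r} |w|^2\right)^{1/2},
\]
with $C$ depending only on $d$, $m$, and $\mu$. This is the only analytic input to the proof.

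To replace the right-hand side by the best affine approximation, I would observe that for any $M\in\mathbb{R}^{m\times d}$ and $q\in\mathbb{R}^m$, the affine function $Mx+q$ satisfies $\mathcal{L}_0(Mx+q)=0$, because $\widehat{A}M$ is a constant matrix. Hence $w - Mx - q$ is also a solution of $\mathcal{L}_0 = 0$, and since $\nabla^2(w-Mx-q) = \nabla^2 w$, the same estimate applied to $w-Mx-q$, followed by infimizing over $M,q$, yields
\[
\|\nabla^2 w\|_{L^\infty(B_{r/2})} \le \frac{C}{r^2}\inf_{M,q}\left(\average_{B_r}|w-Mx-q|^2\right)^{1/2}.
\]

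To conclude, I would use Taylor's theorem with the particular choice $M_0 = \nabla w(0)$, $q_0 = w(0)$: for $\theta \in (0,1/2)$ and $x\in B_{\theta r}$,
\[
|w(x)-M_0 x - q_0| \le \tfrac12 |x|^2 \sup_{B_{\theta r}}|\nabla^2 w| \le C(\theta r)^2 \|\nabla^2 w\|_{L^\infty(B_{r/2})}.
\]
Averaging over $B_{\theta r}$, combining with the preceding display, and dividing by $\theta$ gives
\[
\frac{1}{\theta}\inf_{M,q}\left(\average_{B_{\theta r}} |w-Mx-q|^2\right)^{1/2} \le C\theta \inf_{M,q}\left(\average_{B_r} |w-Mx-q|^2\right)^{1/2},
\]
which is precisely (\ref{g-t-interior-1}). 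Condition (\ref{g-t-1-2}) required by Theorem \ref{g-theorem-1} then follows at once by taking $\theta$ small enough that $C\theta \le 1/2$.

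No real obstacle arises; this is a classical $C^{1,1}$-type improvement-of-flatness statement for constant-coefficient systems. The only points worth emphasizing in a careful write-up are the smoothness of weak solutions of $\mathcal{L}_0$ together with the interior $L^\infty$ estimate for $\nabla^2 w$, and the observation that $\mathcal{L}_0$ annihilates affine functions, which licenses replacing $w$ by $w - Mx - q$ throughout and is what converts the $L^2$-on-$w$ estimate into an $L^2$-on-affine-approximation estimate.
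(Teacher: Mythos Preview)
Your proof is correct and follows essentially the same approach as the paper: both use the interior $C^2$ estimate for the constant-coefficient operator $\mathcal{L}_0$, the observation that $\mathcal{L}_0$ annihilates affine functions (so $w-Mx-q$ is again a solution), and the Taylor-type choice $M_0=\nabla w(0)$, $q_0=w(0)$ to control the left-hand side by $C\theta\,\|\nabla^2 w\|_{L^\infty(B_{r/2})}$.
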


\begin{proof}
Estimate (\ref{g-t-interior-1}) follows readily from the interior $C^2$ estimates for $\mathcal{L}_0$.
Indeed, by rescaling, we may assume that $r=1$.
In this case the left hand side of (\ref{g-t-interior-1}) is bounded by $C\, \theta \|\nabla^2 w\|_{L^\infty(B_\theta)}$.
Since $\mathcal{L}_0 (w-Mx -q)=0$ in $B_1$ for any $M\in \mathbb{R}^{m\times d}$ and
$q\in \mathbb{R}^m$, by the $C^2$ estimates for $\mathcal{L}_0$,
\begin{equation}
\theta \| \nabla^2 w\|_{L^\infty(B_\theta)}
\le \theta
\|\nabla^2 w\|_{L^\infty(B_{1/2})}
\le C \theta \inf_{\substack{ M\in \mathbb{R}^{m\times d} \\ q\in \mathbb{R}^m}}
\left\{ \average_{B_1} |w-Mx -q|^2\right\}^{1/2},
\end{equation}
where $C$ depends only on $d$, $m$, and $\mu$.
The proof is complete.
\end{proof}

\begin{lemma}\label{Dini-lemma}
Suppose that there exist $C_0>0$ and $N>(5/2)$ such that
$$
\rho(R)\le C_0 \, \big[ \log R \big]^{-N} \qquad \text{ for any } R\ge 2,
$$
where $\rho(R)$ is defined by (\ref{rho}).
Then there exist $\sigma \in (0,1)$ and $\delta\in (0,1/4)$ such that
$$
\int_0^1 \big[ \omega_\sigma (t) \big]^{\frac{2}{3} -\delta} \, \frac{dt}{t}<\infty,
$$
where $\omega_\sigma (t)$ is defined by (\ref{omega}).
\end{lemma}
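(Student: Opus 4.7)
The plan is to establish a quantitative logarithmic decay rate for $\omega_\sigma(\varep)$ under the hypothesis $\rho(R)\le C_0[\log R]^{-N}$, and then to verify the integrability of $[\omega_\sigma]^{2/3-\delta}(dt/t)$ by a logarithmic change of variables. The two ingredients of $\omega_\sigma(\varep) = [\Theta_1(\varep^{-1})]^\sigma + \sup_{T\ge \varep^{-1}}\langle|\psi-\nabla\chi_T|\rangle$ will each be shown to be of order $[\log(1/\varep)]^{-N\sigma}$.

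First I would bound $\Theta_1(T)$. In the definition $\Theta_1(T)=\inf_{0<R\le T}\{\rho(R)+R/T\}$, I test with $R=T[\log T]^{-N}$. For $T$ large, $\log R = \log T - N\log\log T \ge \tfrac12\log T$, so the hypothesis yields $\rho(R)\le C[\log T]^{-N}$, while $R/T=[\log T]^{-N}$. Hence $\Theta_1(T)\le C[\log T]^{-N}$. Combined with the elementary inequality $\Theta_\sigma(T)\le C_\sigma[\Theta_1(T)]^\sigma$ stated in the remark following Lemma~\ref{lemma-H-2}, this gives $[\Theta_1(\varep^{-1})]^\sigma\le C[\log(1/\varep)]^{-N\sigma}$ for all $\varep$ small. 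For the corrector mean, I would invoke the quantitative estimate obtained in \cite{Shen-2014} for the approximate correctors, which, in the same spirit as the proof of Theorem~\ref{ac-theorem}, yields a bound of the form $\langle |\psi-\nabla\chi_T|\rangle \le C_\sigma\Theta_\sigma(T)$ (or a bound of the same logarithmic order). Taking the supremum over $T\ge \varep^{-1}$, which is harmless since the right-hand side is non-increasing in $T$, delivers $\sup_{T\ge\varep^{-1}}\langle|\psi-\nabla\chi_T|\rangle\le C[\log(1/\varep)]^{-N\sigma}$.

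Combining, $\omega_\sigma(\varep)\le C[\log(1/\varep)]^{-N\sigma}$ for small $\varep$, while $\omega_\sigma$ is merely bounded on the remaining range. Applying the substitution $s=\log(1/t)$, so that $dt/t=-ds$ and $t\in(0,1)$ corresponds to $s\in(0,\infty)$, gives
\begin{equation*}
\int_0^1 [\omega_\sigma(t)]^{2/3-\delta}\,\frac{dt}{t}
= \int_0^\infty [\omega_\sigma(e^{-s})]^{2/3-\delta}\,ds
\le C+ C\int_{s_0}^\infty s^{-N\sigma(2/3-\delta)}\,ds,
\end{equation*}
and this tail is finite precisely when $N\sigma(2/3-\delta)>1$. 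Since $N>5/2$, choosing $\sigma\in(0,1)$ sufficiently close to $1$ and $\delta\in(0,1/4)$ sufficiently close to $0$ makes $N\sigma(2/3-\delta)$ arbitrarily close to $2N/3>5/3>1$, which completes the verification.

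The main obstacle is Step~2: pinning down the precise decay of $\langle|\psi-\nabla\chi_T|\rangle$ as a function of $T$ from the work of the second author in \cite{Shen-2014}. The hypothesis $N>5/2$ is sharp only if this corrector-mean bound is (essentially) linear in $\Theta_\sigma(T)$ rather than of square-root type; any loss of more than a constant exponent in this estimate would force strengthening the hypothesis on $N$. Once this estimate is in hand, the rest of the argument reduces to elementary calculus.
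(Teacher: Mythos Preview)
Your handling of $\Theta_1(T)$ and the final integrability check by logarithmic substitution are fine and match the paper. The gap is exactly where you flagged it, but your guess for the corrector-mean estimate is too optimistic, and this matters for the threshold on $N$.

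The result actually proved in \cite[Theorem~6.6]{Shen-2014} is not $\langle|\psi-\nabla\chi_T|\rangle \le C_\sigma\Theta_\sigma(T)$ but rather
\[
\langle|\psi-\nabla\chi_T|\rangle \;\le\; C_\sigma \int_{T/2}^\infty \frac{\Theta_\sigma(r)}{r}\,dr.
\]
Under the hypothesis $\rho(R)\le C_0[\log R]^{-N}$ one has $\Theta_\sigma(T)\le C_\sigma[\log T]^{-N}$, and integrating against $dr/r$ costs exactly one logarithm: the right-hand side above is of order $[\log T]^{1-N}$, not $[\log T]^{-N}$. Consequently $\omega_\sigma(t)\le C[\log(1/t)]^{1-N}$ (the paper makes this clean by taking $\sigma=1-N^{-1}$ so that both summands in $\omega_\sigma$ have the same order), and the integrability condition becomes $(N-1)\bigl(\tfrac23-\delta\bigr)>1$. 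For $\delta$ near $0$ this forces $N>5/2$, which is precisely the stated hypothesis.

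Your version of the bound would yield convergence already for $N>3/2$; the fact that your computation does not reproduce the threshold $5/2$ in the statement should have signalled that the corrector-mean estimate you invoked is not the one available. Your closing remark also has the logic backwards: a \emph{linear} bound in $\Theta_\sigma(T)$ would make $N>5/2$ unnecessarily strong, not sharp. It is the logarithmic loss coming from the integral that explains the $5/2$.
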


\begin{proof}
It follows from the definition of $\Theta_\sigma (T)$ that
$$
\Theta_\sigma (T) \le \rho(\sqrt{T}) +\left(\frac{1}{\sqrt{T}}\right)^\sigma
\le C_\sigma \, \big[ \log T\big]^{-N}
$$
for $T\ge 2$.
Also, it was proved in \cite[Theorem 6.6]{Shen-2014} that
$$
\langle |\psi-\nabla \chi_T|\rangle \le C_\sigma \int_{T/2}^\infty \frac{\Theta_\sigma (r)}{r}\, dr
$$
for any $\sigma \in (0,1)$.
As a result, if $\sigma=1-N^{-1}$, we obtain
$$
\aligned
\eta_\sigma (t) & =\big[ \Theta_1 (t^{-1}) \big]^{\sigma} +\sup_{T\ge t^{-1}} \langle |\psi -\nabla \chi_T |\rangle\\
& \le \big[ \Theta_1 (t^{-1}) \big]^{\sigma} 
+C_\sigma \int_{(2t)^{-1}}^\infty \frac{\Theta_\sigma (r)}{r}\, dr\\
&\le C_\sigma \big[ \log (1/t) \big]^{1-N}
\endaligned
$$
for $t\in (0, 1/2)$.
Finally, since $N>(5/2)$, we may choose $\delta\in (0,1/4)$ so small that
$((2/3)-\delta)(1-N)<-1$.
This leads to
$$
\int_0^1 \big[ \eta_\sigma (t) \big]^{\frac{2}{3} -\delta} \, \frac{dt}{t}
\le C + C \int_0^{1/2} \big[ \log (1/t) \big]^{(1-N)(\frac23-\delta)} \, \frac{dt}{t}
<\infty,
$$
and completes the proof.
\end{proof}

We are now ready to prove the interior Lipschitz estimates for $\mathcal{L}_\varep$.
We first treat the case $\mathcal{L}_\varep (u_\varep)=0$.

\begin{lemma}\label{i-L-lemma-3}
Suppose that $A(y)$ satisfies the same conditions as in Theorem \ref{main-theorem-Lip}.
Let $u_\varep\in H^1(2B; \mathbb{R}^m)$ be a weak solution of $\mathcal{L}_\varep (u_\varep)=0$ in 
$2B$, where $B=B(x_0, r)$ for some $x_0\in \mathbb{R}^d$ and $r>0$.
Then $|\nabla u_\varep| \in L^\infty(B)$ and
\begin{equation}\label{interior-Lip}
\| \nabla u_\varep\|_{L^\infty(B)}
\le \frac{C}{r} \left\{ \average_{2B} | u_\varep|^2\right\}^{1/2},
\end{equation}
where $C$ depends only on $A$.
\end{lemma}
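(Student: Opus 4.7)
The plan is to combine the large-scale scheme of Theorem~\ref{g-theorem-1} (valid on scales $\ge \varepsilon$) with classical Schauder/Lipschitz regularity for elliptic systems with H\"older continuous coefficients (valid on scales $\le \varepsilon$). By translation invariance it suffices to show $|\nabla u_\varepsilon(x_0)| \le C r^{-1}(\average_{2B}|u_\varepsilon|^2)^{1/2}$. Rescaling $v(x) = u_\varepsilon(x_0 + rx)$ reduces us to the case $x_0 = 0$, $r = 1$, and $v$ solves $\mathcal{L}_{\varepsilon/r}(v) = 0$ on $B_2$; relabeling, we may as well prove the estimate with $x_0 = 0$, $r = 1$, and $\varepsilon$ replaced by $\varepsilon/r$ in the hypotheses. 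In particular we may assume $\varepsilon < 1/4$; the case $\varepsilon \ge 1/4$ is handled directly by the classical interior Lipschitz estimate for operators with H\"older continuous coefficients after rescaling, since the "cell" problem then already sits at the macroscopic scale.

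Assuming $\varepsilon < 1/4$, I would first apply Theorem~\ref{g-theorem-1} to $u = v$ on $B_1$ (with $K = 0$, since $\mathcal{L}_\varepsilon(v) = 0$), using Lemma~\ref{i-L-lemma-1} to verify condition~(\ref{g-t-1-1}) with $\eta(t) = C_\delta [\omega(t)]^{2/3 - \delta}$, using Lemma~\ref{i-L-lemma-0} to verify~(\ref{g-t-1-2}) for some fixed $\theta \in (0, 1/4)$, and using Lemma~\ref{Dini-lemma} under the hypothesis $N > 5/2$ to verify the Dini condition~(\ref{g-t-1-3}). Theorem~\ref{g-theorem-1} then yields, for every $t \in (\varepsilon, 1/4)$,
\begin{equation*}
\frac{1}{t}\inf_{q \in \mathbb{R}^m}\left(\average_{B_t} |v - q|^2\right)^{1/2} \le C \left(\average_{B_1}|v|^2\right)^{1/2}.
\end{equation*}
This is a mean-oscillation (Campanato-type) Lipschitz control down to the microscopic scale, but it is not yet a pointwise bound on $\nabla v$.

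To pass from this large-scale estimate to a pointwise gradient bound, I would rescale to the microscopic level and invoke classical regularity. Set $s = 2\varepsilon$ (or any comparable scale in the admissible range) and define $\tilde v(y) = \varepsilon^{-1} [v(sy) - q_s]$, where $q_s$ is the optimal constant in the inequality above at scale $t = s$. Then $\tilde v$ solves $-\operatorname{div}(A(2y + \text{const})\nabla \tilde v) = 0$ on $B_1$, i.e.\ an equation of the form $\mathcal{L}_1(\tilde v) = 0$ whose coefficients inherit the ellipticity~(\ref{ellipticity}) and H\"older continuity~(\ref{H-continuity}) of $A$. By the classical interior Lipschitz estimate for such operators,
\begin{equation*}
\|\nabla \tilde v\|_{L^\infty(B_{1/2})} \le C \left(\average_{B_1}|\tilde v|^2\right)^{1/2},
\end{equation*}
with $C$ depending only on $A$. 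Unscaling, this reads $|\nabla v(0)| \le C s^{-1} (\average_{B_s}|v - q_s|^2)^{1/2}$, which combined with the Campanato-type bound above gives $|\nabla v(0)| \le C (\average_{B_1}|v|^2)^{1/2}$. Translating this estimate to arbitrary points of $B_1$ (applying the above argument with $0$ replaced by any $y \in B_1$, using balls $B(y, 1) \subset B_2$) and unscaling back to the ball $B$ yields~(\ref{interior-Lip}).

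The main obstacle I anticipate is the careful bookkeeping in the two rescalings: the outer rescaling to normalize the ball to $B_1$, and the inner rescaling by $\varepsilon$ to activate classical Schauder theory. One has to verify that the constants depend only on the intrinsic data of $A$ (ellipticity, H\"older exponent, $\rho$, and the implicit constants in Lemmas~\ref{i-L-lemma-1}--\ref{Dini-lemma}) and not on $\varepsilon$ or $r$. This is exactly where the translation invariance of the hypotheses on $A$ (emphasized in the introduction) is essential, because the rescaled operator $\mathcal{L}_1$ has coefficients $A(\cdot + \text{const})$ whose H\"older norm and almost-periodic modulus coincide with those of $A$.
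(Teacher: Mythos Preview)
Your proposal is correct and follows essentially the same approach as the paper's proof: reduce to the unit ball, dispose of the case $\varepsilon \ge 1/4$ by classical Schauder theory, and for $\varepsilon < 1/4$ combine Theorem~\ref{g-theorem-1} (with hypotheses supplied by Lemmas~\ref{i-L-lemma-1}, \ref{i-L-lemma-0}, \ref{Dini-lemma}) with a blow-up at scale $\varepsilon$ and the classical interior Lipschitz estimate for $\mathcal{L}_1$. The only cosmetic difference is the order of operations: the paper first rescales $v_\varepsilon(x)=\varepsilon^{-1}u_\varepsilon(\varepsilon x)$ and applies classical regularity to $v_\varepsilon$ on $B(0,2)$, then invokes Theorem~\ref{g-theorem-1} to control $\varepsilon^{-1}\inf_q(\average_{B_{2\varepsilon}}|u_\varepsilon-q|^2)^{1/2}$, whereas you apply Theorem~\ref{g-theorem-1} first and rescale afterwards.
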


\begin{proof}
By translation and dilation it suffices to prove that
\begin{equation}\label{i-L-1}
|\nabla u_\varep (0)|\le C \left\{ \average_{B(0,1)} |u_\varep|^2\right\}^{1/2},
\end{equation}
if $\mathcal{L}_\varep (u_\varep)=0$ in $B(0,1)$.
Note that we only need to treat the case $0<\varep<(1/4)$, since
the case $\varep\ge (1/4)$ follows from the standard local regularity theory for second-order elliptic systems 
with H\"older continuous coefficients.
Let $v_\varep (x)=\varep^{-1} u_\varep (\varep x)$.
Then $\mathcal{L}_1 (v_\varep)=0$ in $B(0, 2\varep^{-1})$.
By the standard regularity theory for $\mathcal{L}_1$,
$$
\aligned
|\nabla u_\varep (0)| &=|\nabla v_\varep (0)|
\le C \inf_{q\in \mathbb{R}^m} \left\{ \average_{B(0,2)} |v_\varep -q|^2\right\}^{1/2}\\
&= C\inf_{q\in \mathbb{R}^m}\frac{1}{\varep}
\left\{ \average_{B(0, 2\varep)} |u_\varep -q|^2 \right\}^{1/2}.
\endaligned
$$

To complete the proof
we  use Theorem \ref{g-theorem-1}, with $K=0$,  to obtain
\begin{equation}\label{i-L-3}
\frac{1}{\varep}\inf_{q\in \mathbb{R}^m}
\left\{ \average_{B(0, 2\varep)} |u_\varep -q|^2 \right\}^{1/2}
\le 
C \left\{ \average_{B(0,1)} |u_\varep|^2\right\}^{1/2}.
\end{equation}
Note that the condition (\ref{g-t-1-1})  is given by Lemma \ref{i-L-lemma-1},
while the condition (\ref{g-t-1-2}) is given by Lemma \ref{i-L-lemma-0}.
 Also,  the Dini condition (\ref{g-t-1-3}) is satisfied in view of Lemma \ref{Dini-lemma}.
As a result, the estimate (\ref{i-L-3}) follows from (\ref{g-t-1-4})
with $t=2\varep$.
\end{proof}

\begin{remark}
\label{Liouville}
{\rm 
In the argument for Lemma 4.4, we used only the first conclusion~\eqref{g-t-1-4} of Theorem~\ref{g-theorem-1}. The second conclusion (\ref{g-t-1-5}) is also useful, and yields the following Liouville result: if $u\in H^1(\mathbb{R}^d;\mathbb{R}^m)$ is any solution of $\mathcal L_1(u) = 0$ in $\mathbb{R}^d$ satisfying the linear growth condition
\begin{equation*} \label{}
\limsup_{r\to\infty} \frac 1r \left\{ \average_{B(0,r)} |u|^2 \right\}^{1/2} <\infty, 
\end{equation*}
then there exists $M\in \mathbb{R}^{m\times d}$ such that 
\begin{equation*} \label{}
\limsup_{r\to \infty} \frac 1r \left\{ \average_{B(0,r)} |u(x) - Mx |^2\,dx \right\}^{1/2} = 0.
\end{equation*}
In other words, if an entire solutions grows at most linearly, it is close to an affine function. 
To prove this, we follow the argument of Lemma 4.4 with $\varep>0$ fixed and $u_\varep(x):= \varep u(x/\varep)$. 
We notice that in the  application of Theorem~\ref{g-theorem-1} we invoked to get~\eqref{i-L-3}, 
we also obtain from the second conclusion of the theorem that, for every $\varep < t < 1/4$, 
\begin{equation*} \label{}
\frac{1}{t}\inf_{\substack{ M\in \mathbb{R}^{m\times d}\\ 
q\in \mathbb{R}^m}}
\left\{ \average_{B(0, 2 t)} |u_\varep(x)-Mx -q|^2\,dx \right\}^{1/2}
\le 
C \left\{ t^\alpha + \eta(\varep/t) \right\} \left\{ \average_{B(0,1)} |u_\varep|^2\right\}^{1/2}.
\end{equation*}
By undoing the scaling and writing this in terms of $u$, we obtain, for every $1 < r < 1/4\varep$,
\begin{equation*} \label{}
\frac{1}{r}\inf_{\substack{ M\in \mathbb{R}^{m\times d}\\ 
q\in \mathbb{R}^m}}
\left\{ \average_{B(0, 2 r)} |u(x)-Mx -q|^2\,dx \right\}^{1/2}
\le 
C \left\{ (\varep r)^\alpha + \eta(1/r) \right\} \varep \left\{ \average_{B(0,1/\varep)} |u|^2\right\}^{1/2}.
\end{equation*}
Sending $\varep\to 0$ and using the growth hypothesis, we get
\begin{equation*} \label{}
\frac{1}{r}\inf_{\substack{ M\in \mathbb{R}^{m\times d}\\ 
q\in \mathbb{R}^m}}
\left\{ \average_{B(0, 2 r)} |u(x)-Mx -q|^2\,dx \right\}^{1/2}
\le 
C \eta(1/r).
\end{equation*}
We now obtain the Liouville property by applying the previous inequality  on the dyadic scales $r_k:=2^k$, $k\in\mathbb{N}$, and using the Dini condition~\eqref{g-t-1-3} to verify that the sequence $\{M_k\}_{k\in\mathbb{N}}\subset \mathbb{R}^m$ of corresponding affine approximations is a Cauchy sequence. 
}
\end{remark}

As we were finishing the writing of this paper, we became aware of some very recent results of Gloria, Neukamm, and Otto~\cite{GNO-2014}, who obtain a more general version of the Liouville result presented above in Remark~\ref{Liouville}. Their scheme is similar to the one from~\cite{Armstrong-Smart-2014}, which we use here. 
Both are based on a Campanato iteration to obtain an improvement of flatness for solutions,
 although ``flatness" in~\cite{Armstrong-Smart-2014}, and in this paper,
  is defined with respect to \emph{affine functions}, while~\cite{GNO-2014}, following~\cite{AL-1987, AL-liouv}, 
  defines it with respect to \emph{correctors}. The latter notion allows to formulate some more precise results, although it does not seem to help estimating the gradient of the correctors themselves (which is more or less 
  equivalent to the task of obtaining uniform Lipschitz estimates). 

\smallskip

\begin{theorem}\label{i-L-theorem}
Suppose that $A(y)$ satisfies the same conditions as in Theorem \ref{main-theorem-Lip}.
Let $u_\varep$ be a weak solution of
 $\mathcal{L}_\varep (u_\varep)=F$ in $2B$, where $B=B(x_0, r)$. Then
\begin{equation}\label{estimate-i-L}
\|\nabla u_\varep\|_{L^\infty(B)}
\le \frac{C}{r} \left\{ \average_{2B} |u_\varep|^2 \right\}^{1/2}
+C \, r^\beta\sup_{\substack{y\in 2B\\ 0<t<r}} t^{1-\beta}  \average_{B(y, t)\cap 2B} |F|
\end{equation}
for any $\beta\in (0,1)$, where $C$ depends only on $\beta$ and $A$.
\end{theorem}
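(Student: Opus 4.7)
The plan is to decompose $u_\varep = V_\varep + h_\varep$, where $V_\varep$ is a Newtonian potential for the source $F$ and $h_\varep$ satisfies $\mathcal{L}_\varep h_\varep = 0$; the homogeneous interior Lipschitz estimate of Lemma~\ref{i-L-lemma-3} then handles $h_\varep$, while $V_\varep$ is controlled by pointwise bounds on the fundamental solution of $\mathcal{L}_\varep$. First, by translation and the rescaling $v(x):=u_\varep(rx)/r$ (which maps $\mathcal{L}_\varep$ to $\mathcal{L}_{\varep/r}$, and in which the quantity $\sup_{y\in 2B,\,0<t<r} t^{1-\beta}\average_{B(y,t)\cap 2B}|F|$ is precisely what picks up the factor $r^\beta$ appearing in the conclusion), I reduce to the case $x_0=0$, $r=1$, $\mathcal{L}_\varep u_\varep = F$ in $B_2$, for which it suffices to prove
\[
\|\nabla u_\varep\|_{L^\infty(B_1)} \le C\left\{\left(\average_{B_2}|u_\varep|^2\right)^{1/2} + K\right\},\qquad K := \sup_{y\in B_2,\,0<t<1} t^{1-\beta}\average_{B(y,t)\cap B_2}|F|.
\]

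The key tool is the fundamental solution $\Gamma_\varep(x,y)$ of $\mathcal{L}_\varep$ on $\mathbb{R}^d$ (I take $d\ge 3$; the case $d=2$ is analogous with logarithmic adjustments). Under the ellipticity and H\"older hypotheses~(\ref{ellipticity}) and~(\ref{H-continuity}) on $A$, it is classical that $|\Gamma_\varep(x,y)|\le C|x-y|^{2-d}$ with $C$ depending only on $A$, and in particular uniformly in $\varep$. Since $\Gamma_\varep(\cdot,y)$ solves $\mathcal{L}_\varep \Gamma_\varep(\cdot,y) = 0$ in $B(x,|x-y|/2)$, applying Lemma~\ref{i-L-lemma-3} on that ball and combining with the $L^\infty$ bound yields the matching gradient estimate
\[
|\nabla_x\Gamma_\varep(x,y)| \le \frac{C}{|x-y|}\left(\average_{B(x,|x-y|/2)}|\Gamma_\varep(\cdot,y)|^2\right)^{1/2} \le C|x-y|^{1-d}.
\]

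Next, I would set $V_\varep(x) := \int_{B_{3/2}} \Gamma_\varep(x,y) F(y)\,dy$, so that $\mathcal{L}_\varep V_\varep = F$ in $B_{3/2}$. Decomposing the defining integrals over dyadic annuli centered at $x$ and using the Morrey-type hypothesis $\average_{B(y,t)}|F| \le K t^{\beta-1}$ produces geometric sums yielding
\[
\|V_\varep\|_{L^\infty(B_{3/2})} \le CK,\qquad \|\nabla V_\varep\|_{L^\infty(B_1)} \le C\beta^{-1}K.
\]
The difference $h_\varep := u_\varep - V_\varep$ then solves $\mathcal{L}_\varep h_\varep = 0$ in $B_{3/2}$, and Lemma~\ref{i-L-lemma-3} applied in $B(x,1/4)$ for each $x\in B_1$ gives $\|\nabla h_\varep\|_{L^\infty(B_1)} \le C (\average_{B_{3/2}}|h_\varep|^2)^{1/2} \le C\{(\average_{B_2}|u_\varep|^2)^{1/2}+K\}$, from which the theorem follows through $\nabla u_\varep = \nabla V_\varep + \nabla h_\varep$. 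The most delicate point is verifying the uniform-in-$\varep$ constant in the pointwise fundamental solution bound; once this is in hand, the remainder reduces to a routine Newtonian-potential computation combined with the homogeneous Lipschitz estimate of Lemma~\ref{i-L-lemma-3}.
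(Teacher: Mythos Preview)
Your approach is essentially identical to the paper's: reduce to $r=1$, subtract the Newtonian potential $v_\varep(x)=\int \Gamma_\varep(x,y)F(y)\,dy$ built from the fundamental solution, bound $v_\varep$ and $\nabla v_\varep$ via the pointwise estimates $|\Gamma_\varep|\le C|x-y|^{2-d}$ and $|\nabla_x\Gamma_\varep|\le C|x-y|^{1-d}$ (the latter obtained by applying Lemma~\ref{i-L-lemma-3} to $\Gamma_\varep(\cdot,y)$), and then apply Lemma~\ref{i-L-lemma-3} to the homogeneous remainder. The only differences are cosmetic (the paper integrates over $2B$ rather than $B_{3/2}$, and handles $d=2$ by adding a dummy variable).

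One point to sharpen: you call the uniform size bound $|\Gamma_\varep(x,y)|\le C|x-y|^{2-d}$ ``classical'' under (\ref{ellipticity}) and (\ref{H-continuity}). For systems this is not quite right as stated, since the H\"older seminorm of $x\mapsto A(x/\varep)$ blows up like $\varep^{-\lambda}$ and the classical Schauder constants are not uniform in $\varep$. What is actually needed is the \emph{uniform} interior H\"older estimate (\ref{H-estimate}) established in \cite{Shen-2014} via compactness; the paper invokes precisely this. You correctly flag the fundamental-solution bound as the delicate point, so this is a matter of citing the right input rather than a gap in the argument.
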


\begin{proof}
By translation and dilation we may assume that $x_0=0$ and $r=1$.
We may also assume $d\ge 3$, as the 2-d case may be reduced to the 3-d case by adding a dummy variable.

Consider
$$
v_\varep (x)=\int_{2B} \Gamma_\varep (x,y) F(y)\, dy,
$$
where $\Gamma_\varep (x,y)$ denotes the matrix of fundamental solutions for $\mathcal{L}_\varep$
in $\mathbb{R}^d$, with pole at $y$. Note that $\mathcal{L}_\varep (v_\varep)=F$ in $2B$.
By the interior H\"older estimates in \cite{Shen-2014}, we have
\begin{equation}\label{size-estimate}
|\Gamma_\varep (x,y)|\le C\, |x-y|^{2-d} \qquad \text{ for any } x, y\in \mathbb{R}^d,
\end{equation}
where $C$ depends only on $A$.
Since $\mathcal{L}_\varep \big (\Gamma_\varep (\cdot, y) \big)=0$ in $\mathbb{R}^d\setminus \{ y\}$,
we may use (\ref{size-estimate}) and (\ref{interior-Lip}) to obtain 
\begin{equation}\label{gradient-estimate}
|\nabla_x \Gamma_\varep (x,y)|\le C\, |x-y|^{1-d} \qquad \text{ for  any } x,y\in \mathbb{R}^d.
\end{equation}
It is not hard to see that this gives
\begin{equation}\label{estimate-v}
\|\nabla v_\varep\|_{L^\infty(2B)} +\| v_\varep\|_{L^\infty(2B)}
\le C_\beta\, \sup_{\substack{y\in 2B\\ 0<t<1}} t^{1-\beta} \average_{ B(y,t)\cap 2B} |F|.
\end{equation}
for any $\beta\in (0,1)$.

Finally, since $\mathcal{L}_\varep (u_\varep -v_\varep)=0$ in $2B$,
we may invoke Lemma \ref{i-L-lemma-3} to obtain 
$$
\aligned
\|\nabla (u_\varep -v_\varep)\|_{L^\infty(B)}
&\le C \left\{ \average_{2B} |u_\varep -v_\varep|^2\right\}^{1/2}\\
&\le C \left\{ \average_{2B}|u_\varep|^2\right\}^{1/2}
+C_\beta \sup_{\substack{y\in 2B\\ 0<t<1}} t^{1-\beta} \average_{B(y,t)\cap 2B} |F|,
\endaligned
$$
where we have used (\ref{estimate-v}) for the last inequality.
This, together with (\ref{estimate-v}), yields the estimate (\ref{estimate-i-L}).
\end{proof}

\begin{remark}\label{ac-remark}
{\rm
Fix $1\le j\le d$, $1\le \beta\le m$, and $y\in \mathbb{R}^d$.
Let 
$$
u(x)= \chi_{T, j}^\beta (x) -\chi_{T, j}^\beta (y) + (x_j-y_j) e^\beta,
$$
where $T\ge 1$.
Then $\mathcal{L}_1 (u) = -T^{-2} \chi_{T, j}^\beta$ in $\mathbb{R}^d$.
It follows from Theorem \ref{i-L-theorem} that
$$
|\nabla u (y)|\le \frac{C}{T}  \left(\average_{B(y, T)} |u|^2 \right)^{1/2} + C\, T^{-1} \| \chi_T\|_\infty
\le C.
$$
As a result, if $A$ satisfies the same conditions in Theorem \ref{main-theorem-Lip}, then
\begin{equation}\label{ac-Lip-bound}
\|\nabla \chi_T \|_{L^\infty(\mathbb{R}^d)} \le C,
\end{equation}
where $C$ depends only on $A$.
}
\end{remark}

%%%%%%%%%%%%%%%%%%%%%%%%%%%%%%%%%%%%%%%%%%%%%%%%%%%%%

%%%%%%%%%%%%%%%%%%%%%%%%%%%%%%%%%%%%%%%%%%%%%%%%%%%%%%

\section{Interior $W^{1,p}$ estimates}
\setcounter{equation}{0}

The goal of this section is to prove the following theorem.

\begin{theorem}\label{i-W-1-p-theorem}
Suppose that $A(y)$ is uniformly almost-periodic in $\mathbb{R}^d$ and
satisfies (\ref{ellipticity}).
Also assume  $A(y)$ satisfies the  condition (\ref{decay-condition}) for some $N>5/2$. 
Let $u_\varep \in H^1(2B;\mathbb{R}^m)$ be a weak solution of
$\mathcal{L}_\varep (u_\varep) =\text{\rm div} (f)$ in $2B$ for some ball $B$ in $\mathbb{R}^d$.
 Suppose that $f=(f_i^\alpha)\in L^p (2B; \mathbb{R}^{dm})$ for some $2<p<\infty$.
Then
\begin{equation}\label{i-W-1-p}
\left\{ \average_B |\nabla u_\varep|^p \right\}^{1/p}
\le C_p \left\{ \left(\average_{2B} |\nabla u_\varep|^2 \right)^{1/2}
+\left(\average_{2B} |f|^p \right)^{1/p} \right\},
\end{equation}
where $C_p$ depends only on $p$ and $A$.
\end{theorem}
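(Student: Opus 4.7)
The plan is to reduce Theorem \ref{i-W-1-p-theorem} to the interior Lipschitz estimate of Theorem \ref{i-L-theorem} through a real-variable method of Shen's that promotes an $L^\infty$ bound for the homogeneous equation to an $L^p$ estimate for the inhomogeneous one. The hypothesis $N>5/2$ enters only through Theorem \ref{i-L-theorem}, so no further homogenization machinery (correctors, convergence rates, etc.) is required at this stage.

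First I would extract from Theorem \ref{i-L-theorem} the following Caccioppoli-type consequence: for any weak solution $w$ of $\mathcal{L}_\varep(w)=0$ in $2B'$,
\begin{equation*}
\|\nabla w\|_{L^\infty(B')} \le C\left(\average_{2B'}|\nabla w|^2\right)^{1/2},
\end{equation*}
obtained by applying Theorem \ref{i-L-theorem} (with $F=0$) to $w-(w)_{2B'}$ and invoking Poincar\'e's inequality to absorb the $L^2$ norm of $w-(w)_{2B'}$.

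Next I would perform the standard local decomposition. After rescaling we may assume $B=B(0,1)$. For any ball $B'$ with $4B'\subset 2B$, let $v_\varep \in H^1_0(4B';\mathbb{R}^m)$ be the unique weak solution of $\mathcal{L}_\varep(v_\varep)=\text{\rm div}(f)$ in $4B'$, and set $w_\varep=u_\varep-v_\varep$, so that $\mathcal{L}_\varep(w_\varep)=0$ in $4B'$. The standard energy estimate gives
\begin{equation*}
\left(\average_{4B'}|\nabla v_\varep|^2\right)^{1/2} \le C\left(\average_{4B'}|f|^2\right)^{1/2},
\end{equation*}
while applying the first step to $w_\varep$ yields, for any $q<\infty$,
\begin{equation*}
\left(\average_{2B'}|\nabla w_\varep|^q\right)^{1/q} \le C\,\|\nabla w_\varep\|_{L^\infty(2B')} \le C\left(\average_{4B'}|\nabla u_\varep|^2\right)^{1/2}+C\left(\average_{4B'}|f|^2\right)^{1/2}.
\end{equation*}

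These two inequalities are exactly the hypotheses of Shen's real-variable lemma (used, e.g., in the $W^{1,p}$ arguments of \cite{Shen-2008} and \cite{Geng-Shen-2014}), applied with $F=|\nabla u_\varep|$ and source $|f|$. Choosing the higher integrability exponent $q$ arbitrarily large, the lemma produces the desired estimate
\begin{equation*}
\left(\average_B|\nabla u_\varep|^p\right)^{1/p} \le C_p\left\{\left(\average_{2B}|\nabla u_\varep|^2\right)^{1/2}+\left(\average_{2B}|f|^p\right)^{1/p}\right\}
\end{equation*}
for every $2<p<\infty$. The only delicate point is the bookkeeping in the correct formulation of Shen's lemma (covering arguments and a stopping-time decomposition in the style of Calder\'on--Zygmund), but once the uniform $L^2$-to-$L^\infty$ bound for homogeneous solutions is in hand, the remainder of the argument is by now classical and makes no further use of almost-periodicity.
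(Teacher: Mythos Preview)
Your overall strategy --- decompose $u_\varep = v_\varep + w_\varep$ on each small ball, control $\nabla v_\varep$ in $L^2$ by the energy estimate, get a high reverse H\"older inequality for $\nabla w_\varep$, and feed everything into Shen's real-variable lemma --- is exactly the route the paper takes. The issue is in how you obtain the reverse H\"older inequality for the homogeneous part.

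You invoke Theorem \ref{i-L-theorem} to get $\|\nabla w\|_{L^\infty(B')}\le C(\average_{2B'}|\nabla w|^2)^{1/2}$. But Theorem \ref{i-L-theorem} assumes the H\"older continuity condition (\ref{H-continuity}) on $A$ (it inherits this from Theorem \ref{main-theorem-Lip}), whereas Theorem \ref{i-W-1-p-theorem} does \emph{not} --- the paper stresses immediately after the statement that, in contrast to the Lipschitz result, no H\"older continuity is needed here. So as written your argument proves the theorem only under an extra hypothesis that the statement does not make.

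The paper closes this gap in Lemma \ref{interior-W-1-p-lemma}: the large-scale bound
\[
\frac{1}{\varep}\inf_{q}\Big(\average_{B(0,2\varep)}|u_\varep - q|^2\Big)^{1/2}\le C\Big(\average_{B(0,1)}|u_\varep|^2\Big)^{1/2}
\]
(estimate (\ref{i-L-3})) comes from Theorem \ref{g-theorem-1} and Lemmas \ref{i-L-lemma-1}--\ref{Dini-lemma}, none of which use (\ref{H-continuity}); the passage from scale $\varep$ down to zero then uses only the classical local $W^{1,p}$ theory for elliptic systems with \emph{continuous} coefficients (uniform almost-periodicity gives uniform continuity), which yields $(\average_{B(0,\varep)}|\nabla u_\varep|^p)^{1/p}$ rather than an $L^\infty$ bound. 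A covering argument then gives the reverse H\"older inequality (\ref{interior-W-1-p}) for any finite $p$, which is all Shen's lemma needs. If you swap your appeal to Theorem \ref{i-L-theorem} for this two-scale argument, your proof becomes correct and coincides with the paper's.
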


We remark that in contrast to Theorem \ref{i-L-theorem}, the H\"older continuity 
condition (\ref{H-continuity}) is not required for $W^{1,p}$ estimates.

We first treat the case where $\mathcal{L}_\varep (u_\varep) =0$.

\begin{lemma}\label{interior-W-1-p-lemma}
Assume $A$ satisfies the same assumptions as in Theorem \ref{i-W-1-p-theorem}.
Let $u_\varep\in H^1(2B; \mathbb{R}^m)$ be a weak solution of 
$\mathcal{L}_\varep (u_\varep)=0$ in $2B$, where $B=B(x_0, r)$
for some $x_0\in \mathbb{R}^d$ and $r>0$.
Then $|\nabla u_\varep |\in L^p(B)$ for any $2<p<\infty$, and 
\begin{equation}\label{interior-W-1-p}
\left\{\average_{B} |\nabla u_\varep|^p \right\}^{1/p}
\le C_p
\left\{ \average_{2B} |\nabla u_\varep|^2\right\}^{1/2},
\end{equation}
where $C_p$ depends only on $p$ and $A$.
\end{lemma}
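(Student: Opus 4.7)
The plan is to apply the real-variable method of Shen (cf.~\cite{Shen-2008}), which reduces a local $L^p$ estimate to the construction, at every scale, of a suitable decomposition $\nabla u_\varep = G_B + R_B$ on each ball $B \subset 2B_0$ (with $B_0$ the ball in the statement). The ``good'' part $G_B$ should satisfy an $L^q$ bound for some $q$ as large as we please, while the ``remainder'' $R_B$ needs only an $L^2$ bound, each controlled by the $L^2$-average of $\nabla u_\varep$ on a slightly larger ball. Crucially, this version of the real-variable method does not require smallness of the remainder; the extra integrability is extracted entirely from the $L^q$ strength of $G_B$, which is what allows us to conclude $L^p$ for every finite $p$.

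To build the decomposition on a ball $B=B(y,r)$ with $8B \subset 2B_0$, invoke Lemma~\ref{i-L-lemma-1} to produce an $\mathcal{L}_0$-solution $v$ on $2B$ satisfying
$$\left(\average_{2B}|u_\varep-v|^2\right)^{1/2} \le C\, \omega(\varep/r)^{2/3-\delta}\, \inf_{c\in \mathbb{R}^m}\left(\average_{4B}|u_\varep-c|^2\right)^{1/2}.$$
Set $G_B := \nabla v$ and $R_B := \nabla(u_\varep - v)$, so that $|\nabla u_\varep| \le |G_B| + |R_B|$ pointwise on $B$.

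For $G_B$: since $\mathcal{L}_0$ has constant coefficients, standard interior regularity yields $\|\nabla v\|_{L^\infty(B)} \le C (\average_{2B}|\nabla v|^2)^{1/2}$. Inspection of the proof of Lemma~\ref{i-L-lemma-1} shows that $v$ is constructed as the $\mathcal{L}_0$-Dirichlet extension on $2B$ of a smooth mollification $u_\varep * \varphi_t$; combined with the energy estimate for $\mathcal{L}_0$ and the convolution bound $\|\nabla(u_\varep*\varphi_t)\|_{L^2(2B)} \le \|\nabla u_\varep\|_{L^2(4B)}$, this gives $\|\nabla v\|_{L^2(2B)} \le C\|\nabla u_\varep\|_{L^2(4B)}$, hence $\|G_B\|_{L^\infty(B)} \le C(\average_{4B}|\nabla u_\varep|^2)^{1/2}$. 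For $R_B$: using $\mathcal{L}_\varep u_\varep = 0$ and $\mathcal{L}_0 v = 0$, the difference satisfies $\mathcal{L}_\varep(u_\varep - v) = \mathrm{div}\bigl((\widehat{A} - A(x/\varep))\nabla v\bigr)$ in $2B$, so Caccioppoli's inequality yields
$$\|\nabla(u_\varep - v)\|_{L^2(B)} \le C r^{-1}\|u_\varep - v\|_{L^2(2B)} + C\|\nabla v\|_{L^2(2B)},$$
and both terms are bounded by $C(\average_{4B}|\nabla u_\varep|^2)^{1/2}$, the first using Lemma~\ref{i-L-lemma-1} together with Poincar\'e's inequality, the second as above.

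Feeding this decomposition (with $q$ chosen larger than the target $p$) into Shen's real-variable theorem produces~(\ref{interior-W-1-p}). The main technical obstacle will be the energy bound $\|\nabla v\|_{L^2(2B)} \le C\|\nabla u_\varep\|_{L^2(4B)}$, which is not explicit in the statement of Lemma~\ref{i-L-lemma-1} and requires carefully tracking the ball sizes in its proof (governed by the mollification parameter $t$ and the nested Dirichlet problems) so that the energy estimates close up correctly; once this is in hand, the rest of the argument is a routine application of the real-variable framework.
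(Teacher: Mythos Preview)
Your approach has a genuine gap. In Shen's real-variable theorem (Theorem~\ref{Shen-theorem} in this paper), the low-integrability piece of the decomposition---your ``remainder'' $R_B=\nabla(u_\varep-v)$---plays the role of $F_B$, and the hypothesis on $F_B$ is that it be controlled by an \emph{external} datum $f$, not by $F=|\nabla u_\varep|$ itself. Here there is no inhomogeneity, so to apply the theorem you would need $F_B\equiv 0$; equivalently, you would need the reverse H\"older inequality $(\average_{2B}|\nabla u_\varep|^q)^{1/q}\le C(\average_{c_2B}|\nabla u_\varep|^2)^{1/2}$ outright, which is exactly the lemma you are trying to prove. Your Caccioppoli bound for $\nabla(u_\varep - v)$ is not small: the contribution from the right-hand side $\text{\rm div}\big((\widehat A - A(\cdot/\varep))\nabla v\big)$ is $C\|\nabla v\|_{L^2(2B)}\approx C\|\nabla u_\varep\|_{L^2(4B)}$ with a constant of order one, and this cannot be absorbed. (Heuristically, $\nabla u_\varep-\nabla v$ carries the full corrector oscillation $\nabla\chi(\cdot/\varep)$ and is genuinely of unit size in $L^2$.) Your assertion that ``this version of the real-variable method does not require smallness of the remainder'' is not correct: without smallness or control by external data, the trivial choice $G_B=0$, $R_B=|\nabla u_\varep|$ already satisfies your hypotheses and yields nothing.

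The paper's argument runs in the opposite logical order. Lemma~\ref{interior-W-1-p-lemma} is proved \emph{without} Theorem~\ref{Shen-theorem}: one combines the large-scale bound~(\ref{i-L-3}) from the Campanato scheme (valid with no H\"older continuity assumption on $A$) with the classical small-scale $W^{1,p}$ estimate on balls of radius~$\varep$ (using only that $A$ is uniformly continuous), and then covers $B$ by such balls. The resulting reverse H\"older inequality is then the \emph{input} to Theorem~\ref{Shen-theorem} in the proof of Theorem~\ref{i-W-1-p-theorem}. You have the roles of these two results inverted, and the ``technical obstacle'' you flag (tracking ball radii for the energy bound on $v$) is not the real difficulty.
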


\begin{proof} Fix $2<p<\infty$.
By translation and dilation we may assume $x_0=0$ and $r=1$.
By subtracting a constant we may also assume $\int_{2B} u_\varep =0$.
We may further assume that $0<\varep<(1/4)$, as the case $\varep\ge (1/4)$
follows from the standard local $W^{1,p}$ estimates
for second-order elliptic systems with continuous coefficients.
By rescaling the same theory  also gives
\begin{equation}\label{i-W-1-p-1}
\left\{ \average_{B(0, \varep)} |\nabla u_\varep|^p\right\}^{1/p}
\le \frac{C_p}{\varep} 
\inf_{q\in \mathbb{R}^m}
\left\{ \average_{B(0, 2\varep)} |u_\varep -q|^2 \right\}^{1/2}.
\end{equation}
An inspection of the proof for Lemma  \ref{i-L-lemma-3} shows that
estimate (\ref{i-L-3}) continues to hold under the assumption in Theorem \ref{i-W-1-p-theorem}
(the H\"older continuity of $A$ is not required).
Thus, 
$$
\left\{ \average_{B(0, \varep)} |\nabla u_\varep|^p\right\}^{1/p}
\le 
C_p \, \| u_\varep\|_{L^2(B(0,1))}.
$$
By translation this implies that for any $z\in B(0,1)$,
$$
\int_{B(z,\varep)} |\nabla u_\varep|^p\, dx
\le C_p \, \varep^d\, \| u_\varep\|^p_{L^2(B(0,2))}.
$$
It follows  by a simple covering argument that
$$
\int_{B(0,1)} |\nabla u_\varep|^p\le C_p\,  \| u_\varep\|^p_{L^2(B(0,2))}
\le C_p\,  \|\nabla u_\varep\|_{L^2(B(0,2))}^p,
$$
where we have used Poincar\'e inequality for the last step.
\end{proof}

The reduction of Theorem \ref{i-W-1-p-theorem} to Lemma \ref{interior-W-1-p-lemma}
is done through a refined version of Calder\'on-Zygmund argument due to Caffarelli and Peral in \cite{CP-1998}.
Motivated by \cite{CP-1998}, the following theorem
 was formulated and proved in \cite{Shen-2007} (also see \cite{Shen-2005}).
 
 \begin{theorem}\label{Shen-theorem}
 Let $F\in L^2(4B_0)$ and $f\in L^p(4B_0)$ for some $2<p<q<\infty$, where $B_0$ is a ball in $\mathbb{R}^d$.
 Suppose that for each ball $B\subset 2B_0$ with $|B|\le c_1 |B_0|$, there exist two measurable functions
 $F_B$ and $R_B$ on $2B$, such that 
 $|F|\le |F_B| +|R_B|$ on $2B$, and
 \begin{equation}\label{Shen-1}
 \aligned
 \left\{ \average_{2B} |R_B|^q \right\}^{1/q}
 &\le C_1 \left\{ \left(\average_{c_2 B} |F|^2 \right)^{1/2}
 +\sup_{4B_0\supset B^\prime\supset B} \left(\average_{B^\prime} |f|^2\right)^{1/2} \right\},\\
 \left\{ \average_{2B} |F_B|^2 \right\}^{1/2}
 & \le C_2 \sup_{4B_0\supset B^\prime\supset B} \left\{ \average_{B^\prime} |f|^2 \right\}^{1/2},
 \endaligned
 \end{equation}
 where $C_1, C_2>0$, $0<c_1<1$, and $c_2> 2$. Then $F\in L^p (B_0)$ and
 \begin{equation}
 \left\{\average_{B_0} |F|^p\right\}^{1/p}
 \le C \left\{ \left(\average_{4B_0} |F|^2 \right)^{1/2} 
 +\left(\average_{4B_0} |f|^p \right)^{1/p} \right\},
 \end{equation}
 where $C$ depends only on $d$,  $C_1$, $C_2$, $c_1$, $c_2$, $p$, and $q$.
 \end{theorem}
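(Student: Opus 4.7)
The plan is to prove Theorem \ref{Shen-theorem} via a Calder\'on--Zygmund-type good-$\lambda$ inequality obtained from a Vitali stopping-time argument applied to a localized Hardy--Littlewood maximal function. Let $\mathcal{M}$ denote the maximal function acting on functions supported in $4B_0$, i.e.\ $\mathcal{M}(g)(x) = \sup_{B\ni x} \average_{B \cap 4B_0} |g|$. The key observation is that the hypothesis (\ref{Shen-1}) has precisely the structure needed to execute a level-set comparison argument: on any ball $B$, the ``bad part'' $F_B$ is controlled in $L^2$ by $f$, while the ``good part'' $R_B$ enjoys higher integrability ($L^q$), which provides the decay factor needed to iterate.

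First, I would set a large threshold $\lambda_0$ by taking
\[
\lambda_0^2 := \kappa \left\{ \average_{4B_0} |F|^2 + \left(\average_{4B_0} |f|^p\right)^{2/p}\right\},
\]
where $\kappa$ is chosen so large that, by the weak-$(1,1)$ bound for $\mathcal{M}$, the set $E(\lambda_0) := \{x\in B_0 : \mathcal{M}(|F|^2)(x) > \lambda_0^2\}$ has measure strictly less than $c_1|B_0|$. Then, for every $\lambda>\lambda_0$, a standard Vitali/Whitney stopping-time argument produces a countable family of balls $\{B_j\}$, each satisfying $|B_j|\le c_1|B_0|$, the maximality property $\average_{c_2 B_j} |F|^2 \le \lambda^2$, and $E(\lambda)\subset \bigcup_j 2B_j$ up to a negligible set. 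This is where the constants $c_1,c_2$ in the hypothesis enter naturally.

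Next, on each such $B_j$ invoke the decomposition $|F|\le |F_{B_j}|+|R_{B_j}|$. By Chebyshev, the $L^q$-bound on $R_{B_j}$ yields, for any $A>1$,
\[
\bigl|\{x\in B_j : |R_{B_j}(x)|^2 > A^2\lambda^2/4\}\bigr| \lesssim A^{-q}|B_j|\left\{1 + \lambda^{-q}\sup_{B_j\subset B'\subset 4B_0}\left(\average_{B'}|f|^2\right)^{q/2}\right\},
\]
while the $L^2$-bound on $F_{B_j}$ together with Chebyshev gives a bound whose smallness can be tuned by the level at which we measure $\mathcal{M}(|f|^2)$. Summing over $j$ and using that $\sup_{B'\supset B_j}(\average_{B'}|f|^2)^{1/2}\le [\mathcal{M}(|f|^2)(x)]^{1/2}$ for $x\in B_j$, I obtain the good-$\lambda$ inequality
\[
|E(A\lambda)| \le C A^{-q}|E(\lambda)| + C\bigl|\{x\in B_0 : \mathcal{M}(|f|^2)(x) > c(A)\lambda^2\}\bigr|, \qquad \lambda > \lambda_0.
\]

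Finally, multiply by $\lambda^{p-1}$, integrate from $\lambda_0$ to $\infty$, and use the distribution-function identity $\|\mathcal{M}(|F|^2)\|_{L^{p/2}}^{p/2} \asymp \int_0^\infty \lambda^{p-1} |\{\mathcal{M}(|F|^2)>\lambda^2\}|\,d\lambda$. Because $p<q$, choosing $A$ large enough that $CA^{-q}$ falls below any prescribed fraction allows the first term on the right to be absorbed into the left. The second term is controlled by $\|\mathcal{M}(|f|^2)\|_{L^{p/2}}^{p/2}\le C\|f\|_{L^p(4B_0)}^p$ via the maximal theorem ($p/2>1$), and the contribution from $0<\lambda\le\lambda_0$ contributes the $L^2$-average of $F$ on $4B_0$. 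The main obstacle is the careful construction of the Vitali cover so that both $|B_j|\le c_1|B_0|$ and the maximality property on $c_2 B_j$ hold simultaneously; this is what forces the preliminary choice of $\lambda_0$, and the requirement $p<q$ is precisely what makes the iteration/absorption step close.
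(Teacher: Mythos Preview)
The paper does not actually prove this theorem; it is quoted as a tool from \cite{Shen-2007} (motivated by \cite{CP-1998}) and no argument is given. Your good-$\lambda$/Calder\'on--Zygmund approach via the localized maximal function is exactly the method used in those references, so your proposal is correct and aligned with the intended proof.

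One small point worth tightening: in the good-$\lambda$ step you should restrict to the set where $\mathcal{M}(|f|^2)$ is \emph{simultaneously} small, i.e.\ estimate
\[
\bigl|\{x\in B_0:\ \mathcal{M}(|F|^2)(x)>(A\lambda)^2,\ \mathcal{M}(|f|^2)(x)\le (\gamma\lambda)^2\}\bigr|
\le CA^{-q}\,|E(\lambda)|,
\]
with $\gamma=\gamma(A)$ chosen small. This is implicit in your argument (you use $\sup_{B'\supset B_j}(\average_{B'}|f|^2)^{1/2}\le [\mathcal{M}(|f|^2)(x)]^{1/2}$), but stating the two-parameter level-set inequality explicitly makes the Chebyshev bound on $|F_{B_j}|$ cleanly produce the required smallness rather than merely a bounded contribution, and it is what allows the absorption after multiplying by $\lambda^{p-1}$ and integrating.
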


\begin{proof}[\bf Proof of Theorem \ref{i-W-1-p-theorem}]
Suppose that $\mathcal{L}_\varep (u_\varep)=\text{\rm div} (f)$ in $2B_0$
and $f\in L^p(2B_0; \mathbb{R}^{dm})$ for some $2<p<\infty$.
Let $q=p+1$.
We will apply Theorem \ref{Shen-theorem} to $F=|\nabla u_\varep|$.
For each ball $B$ such that $4B\subset 2B_0$, we write $u_\varep=v_\varep +w_\varep$ on $2B$, where
$v_\varep \in H^1_0(4B; \mathbb{R}^{dm})$ is the solution to
$\mathcal{L}_\varep (v_\varep)=\text{\rm div} (f)$ in $4B$.
Let
$$
F_B =|\nabla v_\varep| \qquad \text{ and } \qquad R_B =|\nabla w_\varep|.
$$
Then $|F|\le F_B + R_B$ on $2B$.
It is easy to see that the first inequality in (\ref{Shen-1}) follows from the energy estimate.
Since $\mathcal{L}_\varep (w_\varep)=0$ in $4B$,
it follows from Lemma \ref{interior-W-1-p-lemma} that
$$
\aligned
\left\{\average_{2B} |R_B|^q\right\}^{1/q}
&\le C \left\{ \average_{4B} |R_B|^2\right\}^{1/2}\\
&\le C \left\{ \average_{4B} |\nabla u_\varep|^2\right\}^{1/2}
+\left\{ \average_{4B} |\nabla v_\varep|^2\right\}^{1/2}\\
&\le C\left\{ \average_{4B} |F|^2\right\}^{1/2}
+C \left\{ \average_{4B} |f|^2\right\}^{1/2},
\endaligned
$$
where we have used the energy estimate for the last inequality.
This give the second inequality in (\ref{Shen-1}).
It then follows by Theorem \ref{Shen-theorem} that
$$
\left\{ \average_{B} |\nabla u_\varep|^p\right\}^{1/p}
\le C \left\{ \left(\average_{4B} |\nabla u_\varep|^2\right)^{1/2}
+\left(\average_{4B} |f|^p\right)^{1/p} \right\}
$$
for any ball $B$ such that $4B\subset 2B_0$.
By a simple covering argument this gives (\ref{i-W-1-p}) for $B=B_0$.
\end{proof}

%%%%%%%%%%%%%%%%%%%%%%%%%%%%%%%%%%%%%%%%%%%%%%%%%%%%%%

%%%%%%%%%%%%%%%%%%%%%%%%%%%%%%%%%%%%%%%

\section{Boundary $W^{1, p}$ estimates
and proof of Theorems \ref{main-theorem-W-1-p} and \ref{main-theorem-W-1-p-N}}
\setcounter{equation}{0}

In this section we establish uniform boundary $W^{1,p}$ estimate  for 
$\mathcal{L}_\varep$ with Dirichlet or Neumann condition.
As we shall see, boundary $W^{1,p}$ estimates follow from the interior $W^{1,p}$ estimates
and boundary H\"older estimates.

For $r>0$, let
\begin{equation}\label{Delta}
\aligned
D_r  & =\big\{ (x^\prime, x_d)\in \mathbb{R}^d: \,
 |x^\prime|<r \text{ and } \phi(x^\prime)<x_d<\phi(x^\prime) + 10(K_0+1)r \big\},\\
\Delta_r &= \big\{ (x^\prime, \phi(x^\prime))\in \mathbb{R}^d:\, |x^\prime|<r \big\},
\endaligned
\end{equation}
where $\phi : \mathbb{R}^{d-1} \to \mathbb{R}$ is a $C^{1,\alpha}$ function such that 
supp$(\phi)\subset \{ x^\prime\in \mathbb{R}^{d-1}: \ |x^\prime|\le 10 \}$,
\begin{equation}\label{phi}
\phi (0)=0, \quad \nabla \phi (0)=0, \quad \text{ and } \quad \|\nabla \phi\|_{C^{\alpha} (\mathbb{R}^{d-1})}\le K_0+1.
\end{equation}
The constant $K_0>0$ in (\ref{phi}) is fixed.
The bounding constants $C$ in the next two lemmas will depend on $(\alpha, K_0)$,
 but otherwise not directly on $\phi$.

\begin{lemma}\label{b-H-lemma}
Suppose that $A$ is uniformly almost-periodic in $\mathbb{R}^d$ and satisfies the ellipticity
condition (\ref{ellipticity}).
Let $u_\varep\in H^1(D_{2r}; \mathbb{R}^m)$
be a weak solution of $\mathcal{L}_\varep (u_\varep)=0$ in $D_{2r}$,
with either $u_\varep=0$ or $\frac{\partial u_\varep}{\partial \nu_\varep}=0$ on $\Delta_{2r}$,
for some $0<r\le 1$.
Then, for any $0<\beta<1$,
\begin{equation}\label{b-H-00}
|u_\varep (x) - u_\varep (y)|
\le C_\beta\, r\, \left(\frac{|x-y|}{r}\right)^\beta \left(\average_{D_{2r}} |\nabla u_\varep|^2\right)^{1/2},
\end{equation}
where $C_\beta$ depends only on $\beta$, $A$, and $(\alpha, K_0)$ in (\ref{phi}).
\end{lemma}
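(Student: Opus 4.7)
The proof will follow the same philosophy as Theorem~\ref{g-theorem-1} and Lemma~\ref{i-L-lemma-3}, but now at the boundary and aiming only for an arbitrary H\"older exponent $\beta<1$, which is considerably weaker than Lipschitz and correspondingly requires less structure. By the rescaling $\tilde u(x):=u_\varep(rx)$ we reduce to $r=1$ without changing the $C^{1,\alpha}$ constants of $\phi$. The quantity to control is the boundary Campanato excess
\begin{equation*}
I(s):=\inf_{c\in \mathbb{R}^m}\left(\average_{D_s} |u_\varep-c|^2\right)^{1/2},\qquad s\in (0,1],
\end{equation*}
since, combined with the interior H\"older estimates for $\mathcal{L}_\varep$, Campanato's characterization of $C^\beta$ reduces \eqref{b-H-00} (after translating boundary coordinates so that any base point plays the role of $0$) to showing that $I(s)\le C\, s^\beta\, (\average_{D_2}|\nabla u_\varep|^2)^{1/2}$ for all $s\in(0,1]$.

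The one-step flatness improvement proceeds as follows. Fix $\beta\in(0,1)$. Standard $C^{1,\alpha}$ boundary regularity for the constant-coefficient operator $\mathcal{L}_0$ on the $C^{1,\alpha}$ domain $D_s$ produces $\theta=\theta(\beta,\alpha,K_0)\in(0,1/4)$ such that every $w$ solving $\mathcal{L}_0 w=0$ in $D_s$ with the homogeneous boundary condition (zero trace in the Dirichlet case, zero conormal in the Neumann case) on $\Delta_s$ satisfies
\begin{equation*}
\inf_c\Bigl(\average_{D_{\theta s}}|w-c|^2\Bigr)^{1/2}\le \tfrac12\,\theta^\beta\inf_c\Bigl(\average_{D_s}|w-c|^2\Bigr)^{1/2}.
\end{equation*}
The corresponding inequality for $u_\varep$ then follows by $L^2$ approximation: take $w$ solving $\mathcal{L}_0 w=0$ in $D_s$ with the \emph{mixed} boundary data of $u_\varep$ (the homogeneous condition on $\Delta_s$ and the trace of $u_\varep$ on $\partial D_s\setminus \Delta_s$). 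The bound $\|u_\varep-w\|_{L^2(D_s)}\le \omega(\varep/s)\,\|u_\varep\|_{H^1(D_s)}$, for some modulus $\omega$ with $\omega(0+)=0$, will follow from the Dirichlet rate (Theorem~\ref{rate-theorem-1}) or the Neumann rate (Theorem~\ref{NP-rate-theorem-2}), suitably localized to $D_s$. Iterating through the dyadic scales $\theta^k\ge \varep$ gives $I(\theta^k)\le C\theta^{k\beta}I(1)$; at subscale $\varep$ the rescaling $y=x/\varep$ reduces the estimate to $\mathcal{L}_1$ on a $C^{1,\alpha}$ boundary, where the classical De Giorgi--Nash--Moser boundary H\"older estimate for operators with bounded measurable coefficients applies.

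\textbf{Main obstacle.} The delicate step is the localized convergence rate, since Theorems~\ref{rate-theorem-1} and~\ref{NP-rate-theorem-2} are stated for global boundary data on smooth domains. For the Dirichlet case, the workaround is straightforward: multiply $u_\varep$ by a cutoff $\zeta\in C_c^\infty(D_{3s/2})$ equal to $1$ on $D_s$, extend $\zeta u_\varep$ by zero across $\partial D_{2s}\setminus\Delta_{2s}$, and apply the global estimate on a fabricated $C^{1,\alpha}$ domain whose boundary meets $\partial D_{2s}$ only along $\Delta_{3s/2}$. The Neumann case is the main hurdle: cutting off spoils the conormal condition, so one must instead adapt the proof of Lemma~\ref{NP-rate-lemma-1} using test functions supported away from the top face of $D_s$, and absorb the inhomogeneous trace on $\partial D_s\setminus \Delta_s$ via the non-tangential maximal function of $\nabla v_0$ there, which is itself controlled by interior estimates from within $D_{2s}$.
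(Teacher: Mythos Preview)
Your approach is genuinely different from the paper's and has a serious gap in the Neumann case. The paper does \emph{not} use any quantitative convergence rate here: it invokes the three-step compactness argument of Avellaneda--Lin, which needs only the qualitative homogenization statement (Theorem~\ref{homo-theorem}). The one-step improvement of oscillation is proved by contradiction---if it failed along a sequence $\varep_k\to 0$, the weak $H^1$ limit would be a solution of $\mathcal{L}_0 w=0$ violating the corresponding estimate for the constant-coefficient operator. This is why Lemma~\ref{b-H-lemma} can be stated under the minimal hypotheses (uniform almost-periodicity and ellipticity only).

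Your quantitative route breaks down for the Neumann condition: the Neumann rate you cite (Lemma~\ref{NP-rate-lemma-1} / Theorem~\ref{NP-rate-theorem-2}) explicitly requires the bound~(\ref{ac-Lip}), i.e.\ $\sup_T\|\nabla\chi_T\|_\infty<\infty$, and in the paper this is obtained only \emph{after} the interior Lipschitz estimate (Remark~\ref{ac-remark}), which in turn needs both the H\"older continuity~(\ref{H-continuity}) and the decay condition~(\ref{decay-condition}). Neither is assumed in Lemma~\ref{b-H-lemma}, so you cannot invoke those rate results here. Two smaller issues: (i) the ``De Giorgi--Nash--Moser'' step at scale $\varep$ is false for systems ($m>1$) with merely bounded measurable coefficients---you would need to use instead that uniformly almost-periodic $A$ is uniformly continuous and appeal to VMO/continuous-coefficient regularity; (ii) even in the Dirichlet case, applying Theorem~\ref{rate-theorem-1} locally requires $C^{1,\delta}$ control of the lateral boundary data, which is not available a priori and would require the mollification trick of Lemma~\ref{i-L-lemma-1} that you do not mention.
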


\begin{proof}
In the case of Dirichlet condition $u_\varep =0$ on $\Delta_{2r}$,
the estimate (\ref{b-H-00}) was proved in \cite{Shen-2014}
by using  a three-step compactness argument introduced in \cite{AL-1987}.
The compactness argument in \cite{AL-1987} for H\"older
estimates does not involve correctors and extends readily to the almost-periodic setting.
This is also true in the case of Neumann boundary conditions.
We omit the details and refer the reader to \cite{KLS1}, where  uniform 
boundary H\"older estimates with Neumann conditions were established in the periodic setting.
\end{proof}

\begin{lemma}\label{b-W-lemma-1}
Suppose that $A$ is uniformly almost-periodic in $\mathbb{R}^d$ and satisfies (\ref{ellipticity}).
Also assume that the decay condition (\ref{decay-condition}) holds for some $C_0>0$ and $N>(3/2)$.
Let $u_\varep\in H^1(D_{2r}; \mathbb{R}^m)$
be a weak solution of $\mathcal{L}_\varep (u_\varep)=0$ in $D_{2r}$,
with either $u_\varep =0$  or $\frac{\partial u_\varep}{\partial \nu_\varep}
=0$ on $\Delta_{2r}$, for some $0<r\le1$.
Then, for any $2<p<\infty$,
\begin{equation}\label{b-W-1-00}
\left\{ \average_{D_r} |\nabla u_\varep|^p\right\}^{1/p}
\le C_p 
\left\{ \average_{D_{2r}} |\nabla u_\varep|^2\right\}^{1/2},
\end{equation}
where $C_p$ depends only on $p$, $A$, and $(\alpha, K_0)$ in (\ref{phi}).
\end{lemma}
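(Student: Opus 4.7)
The plan is to mirror the strategy of the interior $W^{1,p}$ estimate (Theorem~\ref{i-W-1-p-theorem}), with the boundary H\"older estimate (Lemma~\ref{b-H-lemma}) playing the role that the interior Lipschitz-type input (\ref{i-L-3}) played there. First I would rescale to $r=1$: the rescaling preserves almost-periodicity of $A$ (after a translation of its argument) and only improves the $C^{1,\alpha}$ norm of $\phi$ by a factor $r^\alpha$. The case $\varep \ge c_0$ for a fixed small $c_0>0$ then follows from classical boundary $W^{1,p}$ theory for elliptic systems with uniformly continuous coefficients on $C^{1,\alpha}$ domains --- applicable because almost-periodicity of $A$ yields uniform continuity with a fixed modulus --- so we may assume $\varep \in (0,c_0)$.

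The key technical step is a reverse H\"older inequality at the microscopic scale $\varep$ near the boundary. Given $z \in \Delta_1$, I would rescale $v(y) := \varep^{-1}(u_\varep(z+\varep y) - u_\varep(z))$, which solves $\mathcal{L}_1^{A_z}v = 0$ with translated coefficients $A_z(y) := A(z/\varep + y)$ that remain elliptic and uniformly continuous with a modulus independent of $z$ and $\varep$, together with the corresponding homogeneous boundary condition on the rescaled boundary, a $C^{1,\alpha}$ graph of seminorm $O(\varep^\alpha)$. Classical boundary $W^{1,p}$ for $\mathcal{L}_1^{A_z}$ then applies uniformly in $z$ and $\varep$ and, after pulling back, yields
\[
\left(\average_{D(z,\varep)}|\nabla u_\varep|^p\right)^{1/p}
\le C_p \left(\average_{D(z,2\varep)}|\nabla u_\varep|^2\right)^{1/2}.
\]
For points at distance $\ge 2\varep$ from $\Delta_2$ the analogous scale-$\varep$ estimate is furnished by Theorem~\ref{i-W-1-p-theorem}.

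To upgrade this microscopic reverse H\"older to the claimed unit-scale estimate, I would apply Shen's real-variable argument (Theorem~\ref{Shen-theorem}, in a version adapted to half-balls) to $F = |\nabla u_\varep|$ on $D_1$. For each ball $B$ with $4B \cap \Delta_2 \neq \emptyset$ of sufficiently small size, decompose $u_\varep = v_B + w_B$ on $4B \cap D_2$, with $v_B \in H^1_0(4B \cap D_2;\mathbb{R}^m)$ solving the local problem with zero data (modified appropriately in the Neumann case so that $w_B$ inherits the homogeneous conormal condition on $\Delta_2 \cap 4B$). The energy estimate controls $|\nabla v_B|$ in $L^2$; the reverse H\"older on $|\nabla w_B|$ at exponent $p+1$ at the scale of $B$ is obtained by combining the microscopic estimate above with Lemma~\ref{b-H-lemma}: the homogeneous boundary condition on $w_B$ allows the H\"older estimate to bound $\|w_B\|_{L^\infty(B)}$ by $(\mathrm{diam}\,B)^\beta (\average_{cB\cap D_2}|\nabla w_B|^2)^{1/2}$, and a Caccioppoli-type inequality together with a covering of $B\cap D_2$ by $\varep$-scale subballs bridges the gap between microscopic and macroscopic scales.

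The principal obstacle is precisely this last bridging step: the rescaling argument yields reverse H\"older only at the microscopic scale $\varep$, while the real-variable scheme requires the hypothesis at every scale up to a fixed fraction of $r=1$. Lemma~\ref{b-H-lemma} is the crucial new ingredient with no interior analog, since in the interior setting Theorem~\ref{i-W-1-p-theorem} already supplies the reverse H\"older at all scales; at the boundary, the H\"older estimate plus the homogeneous boundary condition on $w_B$ is what makes scale-interpolation possible.
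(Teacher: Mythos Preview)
Your ingredients are right---interior $W^{1,p}$ estimates, the boundary H\"older estimate of Lemma~\ref{b-H-lemma}, and a covering---but the paper's route is more direct and sidesteps the ``principal obstacle'' you identify. The real-variable machinery of Theorem~\ref{Shen-theorem} is not invoked here at all: that theorem is designed to absorb a nonzero right-hand side, and since $\mathcal{L}_\varep u_\varep=0$ your decomposition collapses to $v_B\equiv 0$, $w_B=u_\varep$, making the framework vacuous. Instead, after rescaling to $r=1$ and normalizing $\|\nabla u_\varep\|_{L^2(D_2)}=1$, the paper applies the already-established interior $W^{1,p}$ estimate at the \emph{Whitney scale} $\delta(y)=\mathrm{dist}(y,\partial D_2)$: Theorem~\ref{i-W-1-p-theorem} on $B(y,\delta(y)/4)$, followed by Caccioppoli and then Lemma~\ref{b-H-lemma}, gives
\[
\left(\average_{B(y,\delta(y)/8)}|\nabla u_\varep|^p\right)^{1/p}\le C_\beta\,[\delta(y)]^{\beta-1}.
\]
Choosing $\beta\in(1-\tfrac1p,1)$ makes $[\delta(y)]^{p(\beta-1)}$ integrable over $D_1$; integrating in $y$ and applying Fubini finishes the proof. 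The point your scheme does not exploit is that Theorem~\ref{i-W-1-p-theorem} is scale-invariant and holds at \emph{every} interior scale, not only at scale~$\varep$; applying it at the Whitney scale eliminates the microscopic-to-macroscopic bridging entirely, and the boundary H\"older estimate then controls the approach to $\Delta_{2r}$ through the integrability of the Whitney weight rather than through a reverse H\"older hierarchy.
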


\begin{proof}
By rescaling we may assume $r=1$. Also assume that $\| \nabla u_\varep\|_{L^2(D_2)} =1$.
Let $\delta (x)=\text{\rm dist} (x, \partial D_2)$.
It follows from the interior $W^{1,p}$ estimates in Theorem \ref{i-W-1-p-theorem}
that 
\begin{equation}\label{b-W-001}
\aligned
\left(\average_{B(y, \delta(y)/8)} |\nabla u_\varep|^p \right)^{1/p}
 & \le C \left(\average_{B(y, \delta(y)/4)} |\nabla u_\varep|^2 \right)^{1/2}\\
&\le C \left(\average_{B(y, \delta(y)/2)} |u_\varep (x)-u_\varep (y)|^2\, dx\right)^{1/2}\\
&\le C_\beta  \big[ \delta (y) \big]^{\beta -1}
\endaligned
\end{equation}
for any $\beta \in (0,1)$, where we have used Lemma \ref{b-H-lemma}
for the last inequality. By choosing $\beta \in (1-\frac{1}{p}, 1)$, this implies that
\begin{equation}\label{b-W-002}
\int_{D_1} \left(\average_{B(y, \delta(y)/8)} |\nabla u_\varep (x)|^p \, dx \right)\, dy \le C.
\end{equation}
By Fubini's Theorem we then obtain 
\begin{equation}\label{b-W-003}
\int_{D_1} |\nabla u(x)|^p \left\{ \int_{\{ y\in D_1: \, |y-x|<\frac{\delta(y)}{8} \} }
\frac{dy}{\big[\delta (y) \big]^d} \right\}dx \le C.
\end{equation}

Finally, we note that if $|y-x|<\frac{\delta(y)}{8}$, then $\delta (x)\approx \delta (y)$.
Also, it is not hard to verify that for $x\in D_1$,
$$
D_1 \cap B(x, \delta(x)/16)\subset \{ y\in D_1: \, |y-x|<\delta(y)/8 \}.
$$
It follows that
$$
\int_{\{ y\in D_1: \, |y-x|<\frac{\delta(y)}{8} \} }
\frac{dy}{\big[\delta (y) \big]^d} \ge c>0.
$$
This, together with (\ref{b-W-003}), gives
$$
\int_{D_1} |\nabla u_\varep (x)|^p\, dx\le C,
$$
and completes the proof.
\end{proof}

\begin{theorem}\label{b-W-1-p-theorem}
Suppose that $A$ is uniformly almost-periodic in $\mathbb{R}^d$ and satisfies (\ref{ellipticity}).
Also assume that the decay condition (\ref{decay-condition}) holds for some $C_0>0$ and $N>(3/2)$.
Let $\Omega$ be a bounded $C^{1, \alpha}$ domain in $\mathbb{R}^d$ for some $\alpha>0$.

i) Let $u_\varep\in W_0^{1, p}(\Omega; \mathbb{R}^m)$ be a weak solution of
$\mathcal{L}_\varep (u_\varep) =\text{\rm div} (h)$ in $\Omega$,
where $1<p<\infty$ and $h\in L^p(\Omega; \mathbb{R}^{m\times d})$.
Then
\begin{equation}\label{b-W-007}
\|\nabla u_\varep \|_{L^p(\Omega)}
\le C_p \, \| h\|_{L^p(\Omega)},
\end{equation}
where $C_p$ depends only on $p$, $\Omega$, and $A$.

ii) Let $u_\varep\in W^{1, p}(\Omega; \mathbb{R}^m)$ be a weak solution to
$\mathcal{L}_\varep (u_\varep) =\text{\rm div} (h)$ in $\Omega$ and $\frac{\partial u_\varep}{\partial \nu_\varep} 
=-n\cdot h$ on $\partial\Omega$,
where $1<p<\infty$ and $h\in L^p(\Omega; \mathbb{R}^{m\times d})$.
Then estimate (\ref{b-W-007}) holds with $C_p$ depending only on $p$, $\Omega$, and $A$.
\end{theorem}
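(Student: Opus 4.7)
The plan is to derive part (i) for $p > 2$ from the real-variable method of Theorem~\ref{Shen-theorem}, following the template of Theorem~\ref{i-W-1-p-theorem} but substituting Lemma~\ref{b-W-lemma-1} for Lemma~\ref{interior-W-1-p-lemma} at balls touching $\partial\Omega$. By a standard partition of unity together with the $C^{1,\alpha}$ flattening used to define the reference domains $D_r$, the estimate reduces to the following localized statement: if $u_\varep$ solves $\mathcal{L}_\varep(u_\varep) = \text{\rm div}(h)$ in $D_{2r}$ with $u_\varep = 0$ on $\Delta_{2r}$ (Dirichlet) or $\partial u_\varep / \partial \nu_\varep = -n \cdot h$ on $\Delta_{2r}$ (Neumann), then
\[
\left(\average_{D_r}|\nabla u_\varep|^p\right)^{1/p} \le C_p\left\{\left(\average_{D_{2r}}|\nabla u_\varep|^2\right)^{1/2} + \left(\average_{D_{2r}}|h|^p\right)^{1/p}\right\}.
\]
I would apply Theorem~\ref{Shen-theorem} in $D_{2r}$ with $F = |\nabla u_\varep|$, $f = |h|$, and $q = p+1$.

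For each ball $B$ with $|B| \le c_1 |D_{2r}|$, split $u_\varep = v_\varep + w_\varep$ on $4B \cap D_{2r}$. If $4B \cap \Delta_{2r} = \emptyset$ the interior decomposition from the proof of Theorem~\ref{i-W-1-p-theorem} applies verbatim, with Lemma~\ref{interior-W-1-p-lemma} providing the reverse H\"older estimate. If $4B$ meets $\Delta_{2r}$, I take $v_\varep$ to be the weak solution in $4B \cap D_{2r}$ of $\mathcal{L}_\varep(v_\varep) = \text{\rm div}(h)$ with $v_\varep = 0$ on $\partial(4B \cap D_{2r})$ in case (i), or with $\partial v_\varep / \partial \nu_\varep = -n \cdot h$ on $\Delta_{2r} \cap 4B$ and zero conormal data on the rest of $\partial(4B \cap D_{2r})$ in case (ii) (adjusting by a constant to enforce the compatibility condition). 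The energy estimate yields $\|\nabla v_\varep\|_{L^2(4B \cap D_{2r})} \le C\,\|h\|_{L^2(4B \cap D_{2r})}$, and $w_\varep = u_\varep - v_\varep$ then solves $\mathcal{L}_\varep(w_\varep) = 0$ in $4B \cap D_{2r}$ with the corresponding \emph{homogeneous} boundary condition on $\Delta_{2r} \cap 4B$. Lemma~\ref{b-W-lemma-1}, applied after straightening $\partial(4B\cap D_{2r})$ and a covering by balls centered on $\Delta_{2r} \cap 4B$, then gives
\[
\left(\average_{2B \cap D_{2r}} |\nabla w_\varep|^q\right)^{1/q} \le C \left(\average_{4B \cap D_{2r}} |\nabla w_\varep|^2\right)^{1/2},
\]
verifying both hypotheses of Theorem~\ref{Shen-theorem} and proving the estimate for $p > 2$.

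For $1 < p < 2$ I would argue by duality. Given $\phi \in L^{p'}(\Omega;\mathbb{R}^{m\times d})$ with $p' > 2$, let $\psi$ solve the adjoint problem $\mathcal{L}_\varep^\ast(\psi) = \text{\rm div}(\phi)$ in $\Omega$ with the matching homogeneous boundary condition (Dirichlet paired with Dirichlet, Neumann with Neumann). Since $A^\ast$ satisfies the same ellipticity, almost-periodicity, and decay hypotheses as $A$, the $p' > 2$ estimate already established gives $\|\nabla \psi\|_{L^{p'}(\Omega)} \le C_{p'}\,\|\phi\|_{L^{p'}(\Omega)}$. A direct integration by parts, invoking the boundary conditions on both $u_\varep$ and $\psi$, yields the identity $\int_\Omega \nabla u_\varep \cdot \phi = \int_\Omega h \cdot \nabla \psi$, from which (\ref{b-W-007}) follows at once by $L^{p'}$--$L^p$ duality. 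The main obstacle is the clean implementation of the real-variable scheme near $\partial\Omega$: one must guarantee that the localized subdomains $4B \cap D_{2r}$ are uniformly Lipschitz (so the energy estimate for $v_\varep$ has a constant independent of $B$), and, in case (ii), that the local Neumann data satisfy the compatibility condition, which is handled by modifying the natural conormal data on the piece of $\partial(4B \cap D_{2r})$ disjoint from $\Delta_{2r}$ by an additive constant not affecting the bounds.
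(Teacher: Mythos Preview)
Your proposal is correct and follows essentially the same route as the paper: reduce to $p>2$ by duality (using that $A^*$ satisfies the same hypotheses), then deduce the $L^p$ estimate from the interior and boundary weak reverse H\"older inequalities of Lemma~\ref{interior-W-1-p-lemma} and Lemma~\ref{b-W-lemma-1} via a real-variable argument. The only cosmetic difference is that the paper invokes a version of the real-variable machinery already formulated for bounded domains (citing \cite{Shen-2005, Geng-2012}) so that the two reverse H\"older estimates suffice directly, whereas you localize to $D_{2r}$ and re-implement the splitting $u_\varep=v_\varep+w_\varep$ by hand in order to feed Theorem~\ref{Shen-theorem}; the content is the same.
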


\begin{proof}
Since the adjoint operator $\mathcal{L}_\varep^*$ satisfies the same conditions as
$\mathcal{L}_\varep$,
by a duality argument, we may assume that $p>2$.
By a real-variable argument (see \cite{Shen-2005,Geng-2012}), to prove (\ref{b-W-007}) for a fixed 
$p>2$, it suffices to establish two
weak reverse H\"older estimates for some $q>p$:

(i)\ \  if  $\mathcal{L}_\varep (u_\varep)=0$ in $2B$ and $2B\subset \Omega$, then 
\begin{equation}\label{reverse-H-1}
\left\{\average_{B} |\nabla u_\varep|^q \right\}^{1/q}
\le C \left\{\average_{2B} |\nabla u_\varep|^2 \right\}^{1/2},
\end{equation}
 
 (ii)\ \  if $\mathcal{L}_\varep (u_\varep)=0$ on $2B\cap \Omega$ with either 
 $u_\varep =0$ or $\frac{\partial u_\varep}{\partial \nu_\varep}=0$ on $2B\cap \partial\Omega$,
  where $B=B(x_0, r)$, $x_0\in \partial\Omega$ and
 $0<r<r_0=c_0\, \text{\rm diam}(\Omega)$, 
 then
\begin{equation}\label{reverse-H-2}
\left\{\average_{B\cap \Omega } |\nabla u_\varep|^q \right\}^{1/q}
\le C \left\{\average_{2B\cap\Omega} |\nabla u_\varep|^2 \right\}^{1/2}.
\end{equation}
Note that estimate (\ref{reverse-H-1}) is the interior $W^{1,p}$ estimate given by
 Lemma \ref{interior-W-1-p-lemma},
while (\ref{reverse-H-2}) follows from  the boundary $W^{1,p}$ estimates proved in Lemma \ref{b-W-lemma-1}.
\end{proof}

We are now in a position to give the proof of Theorems \ref{main-theorem-W-1-p} and \ref{main-theorem-W-1-p-N}.

\begin{proof}[\bf Proof of Theorems \ref{main-theorem-W-1-p} and \ref{main-theorem-W-1-p-N}]
For the Neumann condition the reduction of Theorem \ref{main-theorem-W-1-p-N} 
to Theorem \ref{b-W-1-p-theorem} may be found in \cite{Geng-2012, KLS1}.

For Dirichlet condition the reduction of Theorem \ref{main-theorem-W-1-p}
to Theorem \ref{b-W-1-p-theorem} is also more or less well known.
By considering $u_\varep -w$, where $w\in W^{1, p}(\Omega; \mathbb{R}^m)$
is the solution of $-\Delta w=0$ in $\Omega$ and $w=f$ on $\partial\Omega$,
it suffices to prove the theorem for the case $f=0$.
Here we have used the fact that the theorem holds for $\mathcal{L}_\varep =-\Delta$.
Next, in view of Theorem \ref{b-W-1-p-theorem}, we may further assume that $h=0$.
Finally, the case that $\mathcal{L}_\varep (u_\varep)=F$ in $\Omega$
and $u_\varep =0$ on $\partial\Omega$ may be handled by a duality argument.
Indeed, let $v_\varep$ be a solution of $\mathcal{L}_\varep^* (v_\varep)
=\text{\rm div} (h)$ in $\Omega$ and $v_\varep=0$ on $\partial\Omega$,
where $h=(h_i^\alpha) \in C_0^\infty (\Omega; \mathbb{R}^{m\times d})$.
Then
$$
\aligned
\left|\int_\Omega \frac{\partial u_\varep^\alpha}{\partial x_i} \cdot h_i^\alpha \right|
 & =\left| \int_\Omega F^\alpha \cdot v^\alpha\right|
\le \| F\|_{L^p(\Omega)} \|  v_\varep\|_{L^{p^\prime} (\Omega)}\\
&\le C\, \| F\|_{L^p(\Omega)} \|  \nabla v_\varep\|_{L^{p^\prime} (\Omega)}
\le 
C\, \| F\|_{L^p(\Omega)} \| h\|_{L^{p^\prime}(\Omega)},
\endaligned
$$
where we have used Poincar\'e inequality and $W^{1, p}$ estimates for $\mathcal{L}_\varep^*$.
By duality this gives $\|\nabla u_\varep\|_{L^p(\Omega)}
\le C\, \| F\|_{L^p(\Omega)}$.
\end{proof}

%%%%%%%%%%%%%%%%%%%%%%%%%%%%%%%%%%%%%%%%%%%%%%%%%%

%%%%%%%%%%%%%%%%%%%%%%%%%%%%%%%%%%%%%%%%%%

\section{Boundary Lipschitz estimates with Dirichlet condition
and Proof of Theorems \ref{main-theorem-Lip} and \ref{main-theorem-max}}
\setcounter{equation}{0}

In this section we establish the uniform boundary Lipschitz estimates for 
$\mathcal{L}_\varep$ in  bounded $C^{1, \alpha}$ domains
and give the proof of Theorem \ref{main-theorem-Lip}.
As in the case of interior Lipschitz estimates, our approach is based
on the general scheme outlined in Section 3.
However, modifications are needed to take into account the boundary contribution.
\begin{lemma}\label{b-L-lemma-1}
Suppose that $\mathcal{L}_0 (w)=0$ in $D_r$ and $w=f$ on $\Delta_r$ for some $0<r\le 1$.
Let 
$$
\aligned
G(t)= & \frac{1}{t} \inf_{\substack{M\in \mathbb{R}^{m\times d}\\ q\in \mathbb{R}^d}}
\bigg\{ \left(\average_{D_t} |w-Mx-q|^2\right)^{1/2}
+\| f-Mx-q\|_{L^\infty(\Delta_t)}\\
&\qquad  + t \|\nabla_{tan} \big (f-Mx-q\big)\|_{L^\infty(\Delta_t)}
+t^{1+\beta} \|\nabla_{tan} \big (f-Mx-q\big)\|_{C^{0, \beta}(\Delta_t)}
\bigg\}
\endaligned
$$
for $0<t\le r$,
where $\beta =\alpha/2$.
Then, there exists $\theta\in (0,1/4)$, depending only on $\mu$ and $(\alpha, K_0)$ in (\ref{phi}),
such that
\begin{equation}
 G(\theta r) \le (1/2)  G(r).
\end{equation}
\end{lemma}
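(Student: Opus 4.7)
\smallskip

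\noindent\textbf{Proof proposal.} By rescaling I would reduce to the case $r=1$. Let $(M_0, q_0) \in \mathbb{R}^{m\times d} \times \mathbb{R}^m$ be chosen so that the four quantities comprising $G(1)$ are nearly attained (attainment is not required; using any admissible $(M_0, q_0)$ yields an upper bound on the infimum defining $G(\theta)$). Set $v := w - M_0 x - q_0$. Since $\mathcal{L}_0$ has constant coefficients, $v$ satisfies $\mathcal{L}_0(v) = 0$ in $D_1$ with Dirichlet data $v = f - M_0 x - q_0$ on $\Delta_1$.

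The key step is to apply the classical boundary $C^{1,\beta}$ estimate for constant-coefficient elliptic systems in $C^{1,\alpha}$ domains, valid because $\beta = \alpha/2 < \alpha$. This yields
\begin{equation*}
\|v\|_{C^{1,\beta}(\overline{D_{1/2}})} \le C \left( \|v\|_{L^2(D_1)} + \|f - M_0 x - q_0\|_{C^{1,\beta}(\Delta_1)} \right) \le C\, G(1),
\end{equation*}
since the $C^{1,\beta}$ norm on $\Delta_1$ (computed via the graph parametrization by $\phi$) is controlled by the three boundary quantities appearing in $G(1)$.

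I would then set $M_1 := \nabla v(0)$, $q_1 := v(0)$, and let $M' := M_0 + M_1$, $q' := q_0 + q_1$. Define $\tilde v(x) := v(x) - M_1 x - q_1 = w(x) - M' x - q'$. By the $C^{1,\beta}$ regularity of $v$ at the origin,
\begin{equation*}
|\tilde v(x)| \le C |x|^{1+\beta}\, G(1), \qquad |\nabla \tilde v(x)| \le C |x|^{\beta}\, G(1), \qquad [\nabla \tilde v]_{C^{0,\beta}(\overline{D_{1/2}})} \le C\, G(1).
\end{equation*}
Testing the infimum in $G(\theta)$ with this specific pair $(M', q')$, each of the four contributions is bounded as follows: the $L^2$ average on $D_\theta$ and the $L^\infty$ norm on $\Delta_\theta$ are both $\le C\theta^{1+\beta} G(1)$; the term $\theta\|\nabla_{\tan}(\cdot)\|_{L^\infty(\Delta_\theta)}$ contributes $\theta \cdot C\theta^\beta G(1)$; and $\theta^{1+\beta}[\nabla_{\tan}(\cdot)]_{C^{0,\beta}(\Delta_\theta)} \le \theta^{1+\beta} \cdot C\, G(1)$. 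Here I would use that on $\Delta_\theta$ the tangential gradient of $f - M' x - q'$ coincides with $\nabla_{\tan} \tilde v$, and that the full gradient controls the tangential one.

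Summing the four bounds and dividing by $\theta$ yields $G(\theta) \le C \theta^{\beta}\, G(1)$. Choosing $\theta \in (0, 1/4)$ so small that $C\theta^\beta \le 1/2$ finishes the proof, with $\theta$ depending only on $\mu$ and $(\alpha, K_0)$ through the constants in the boundary $C^{1,\beta}$ estimate. The main obstacle is the correct invocation of the boundary $C^{1,\beta}$ estimate with the right scaling and the right interpretation of the tangential $C^{1,\beta}$ data on the graph $\Delta_1$; everything else is bookkeeping.
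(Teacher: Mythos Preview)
Your proposal is correct and follows essentially the same approach as the paper: reduce to $r=1$, use that $\mathcal{L}_0$ annihilates affine functions to subtract off a near-optimal $(M_0,q_0)$, apply the boundary $C^{1,\beta}$ estimate for the constant-coefficient system, and then test $G(\theta)$ with the affine function centered at $(\nabla v(0),v(0))$ to obtain $G(\theta)\le C\theta^\beta G(1)$. The paper's write-up is slightly more compressed (it applies the $C^{1,\beta}$ estimate to $w$ directly and then invokes the invariance under subtraction of $Mx+q$ at the end), but the argument is the same.
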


\begin{proof} 
The lemma follows from  boundary $C^{1, \alpha}$ estimates for second-order elliptic systems
with constant coefficients. By rescaling we may assume $r=1$.
By choosing $q=w(0)$ and $M=\nabla w(0)$,
it is easy to see that for any $\theta\in (0,1/4)$,
$$
G(\theta) \le C\, \theta^{\beta } \| w\|_{C^{1,\beta}(D_\theta)}.
$$
By  boundary $C^{1, \alpha}$ estimates for $\mathcal{L}_0$, we obtain
$$
\| w\|_{C^{1,\beta}(D_\theta)}
\le C \left\{ \left(\average_{D_1} |w|^2\right)^{1/2}
+\| g\|_{L^\infty(\Delta_1)} +\|\nabla_{tan} g \|_{L^\infty(\Delta_1)}
+\| g\|_{C^{0, \beta} (\Delta_1)}\right\},
$$
where $C$ depends only on $\mu$ and $(\alpha, K_0)$.
It follows that
$$
G(\theta)
\le C\, \theta^{\beta}
\left\{ \left(\average_{D_1} |w|^2\right)^{1/2}
+\| g\|_{L^\infty(\Delta_1)} +\|\nabla_{tan} g \|_{L^\infty(\Delta_1)}
+\| g\|_{C^{0, \beta} (\Delta_1)}\right\}.
$$
Finally, since $\mathcal{L}_0 (Mx +q)=0$ for any $M\in \mathbb{R}^{m\times d}$ and
$q\in \mathbb{R}^m$, the estimate above implies that
$$
 G(\theta)\le C\, \theta^\beta G(1).
$$
The desired estimate follows by choosing $\theta\in (0,1/4)$ so small that $C\theta^\beta\le (1/2)$.
\end{proof}

\begin{lemma}\label{b-L-lemma-2}
Let $\mathcal{L}_\varep (u_\varep)=0$ in $D_{2r}$ and $u_\varep=f$ on $\Delta_{2r}$, where $0<\varep<r\le 1$.
Then there exists $w$ such that $\mathcal{L}_0 (w)=0$ in $D_{r}$,
$w=f$ on $\Delta_r$, and
\begin{equation}\label{b-L-2-0}
\aligned
\left\{ \average_{D_r} |u_\varep -w|^2 \right\}^{1/2}
 \le C \big[  \omega (\varep/r)\big]^{\frac23-\delta}
 &\bigg\{   \inf_{q\in \mathbb{R}^m}\bigg[
\bigg( \average_{D_{2r}}  |u_\varep-q |^2 \bigg)^{1/2} +\| f-q\|_{L^\infty(\Delta_{2r})} \bigg]\\
&  +r\, \| \nabla_{tan} f\|_{L^\infty(\Delta_{2r})}
+r^{1+\beta } \|\nabla_{tan} f\|_{C^{0, \beta }
(\Delta_{2r})} \bigg\},
\endaligned
\end{equation}
where $\delta\in (0,1/4)$, $\beta=\alpha/2$, and $\omega (t)=\omega_\sigma (t)$ is defined by (\ref{omega}).
The constant $C$ depends only on $\delta$, $\sigma$, $(\alpha, K_0)$ in (\ref{phi}), and $A$.
\end{lemma}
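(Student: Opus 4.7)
This is the boundary analog of Lemma~\ref{i-L-lemma-1}. By rescaling, reduce to $r=1$. Fix the pair $(M,q)$ appearing in the infimum on the right-hand side of~(\ref{b-L-2-0}); write $\ell(x) = Mx + q$ and let $\mathcal N$ denote the right-hand side factor at this fixed $(M,q)$. Since $\mathcal{L}_0\ell = 0$, replacing $w$ by $w - \ell$ and $f$ by $f - \ell$ preserves the constraints $\mathcal{L}_0 w = 0$ in $D_1$ and $w = f$ on $\Delta_1$, and transfers all relevant norms onto the quantities $u_\varep - \ell$ and $f - \ell$ that appear in $\mathcal N$. The caveat is that $\ell$ is not $\mathcal{L}_\varep$-harmonic, so $u_\varep - \ell$ satisfies $\mathcal{L}_\varep(u_\varep - \ell) = \text{\rm div}(-A(x/\varep)M)$; this oscillating source is absorbed by adding the approximate-corrector correction $\varep\chi_T(x/\varep)M$ (with $T = \varep^{-1}$) to the ansatz, using (\ref{ac-bound}) to bound its size by $C\,\Theta_\sigma(T)$.

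\medskip

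Pick $t \in (0,1/4)$ to be optimized. Construct a boundary datum $g_t$ on $\partial D_{5/4}$ as follows: set $g_t = f$ on $\Delta_{5/4}$, and on the curved part $\Sigma_{5/4} := \partial D_{5/4}\setminus\Delta_{5/4}$ take $g_t$ to be a mollification of $u_\varep$ at scale $t$ performed in the ambient space (valid since $\Sigma_{5/4}$ lies at positive distance from $\partial D_2$), glued to a $C^{1,\beta}$-extension of $f$ across $\partial\Delta_{5/4}$ via a partition of unity. Using the interior H\"older estimate (Theorem~3.4 of~\cite{Shen-2014}) applied to $u_\varep - \ell$ on a neighborhood of $\Sigma_{5/4}$ in $D_2$, together with standard mollification bounds, one obtains, for every $0<\alpha_0<\beta'<1$,
\begin{equation*}
\|u_\varep - g_t\|_{C^{\alpha_0}(\partial D_{5/4})}\le C\,t^{\beta'-\alpha_0}\,\mathcal N,
\qquad
\|g_t\|_{C^{1,\alpha_0}(\partial D_{5/4})}\le C\,t^{\beta'-\alpha_0-1}\,\mathcal N,
\end{equation*}
where $\beta'$ may be chosen arbitrarily close to $1$. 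Let $v_\varep$ and $w$ be the $\mathcal{L}_\varep$- and $\mathcal{L}_0$-harmonic functions in $D_{5/4}$ with common Dirichlet data $g_t$. By construction, $w = f$ on $\Delta_1$ and $\mathcal{L}_0 w = 0$ in $D_1 \subset D_{5/4}$, as required.

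\medskip

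Splitting $\|u_\varep - w\|_{L^2(D_1)} \le \|u_\varep - v_\varep\|_{L^2(D_1)} + \|v_\varep - w\|_{L^2(D_{5/4})}$, the first term is controlled by the boundary H\"older estimate~(\ref{H-estimate}) applied to the $\mathcal{L}_\varep$-harmonic function $u_\varep - v_\varep$ in $D_{5/4}$, whose boundary trace vanishes on $\Delta_{5/4}$ and equals $u_\varep - g_t$ on $\Sigma_{5/4}$, giving $\le C\,t^{\beta'-\alpha_0}\mathcal N$. The second term is controlled by Theorem~\ref{rate-theorem-1} on the $C^{1,\alpha}$ domain $D_{5/4}$, giving $\le C\,[\omega(\varep)]^{2/3}\,t^{\beta'-\alpha_0-1}\mathcal N$. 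Balancing these by taking $t \simeq [\omega(\varep)]^{2/3}$, then letting $\beta'$ approach $1$ and $\alpha_0$ proportional to the desired $\delta$, yields the exponent $2/3-\delta$ in~(\ref{b-L-2-0}).

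\medskip

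The main obstacle is the simultaneous construction of $g_t$ on the mixed boundary $\partial D_{5/4}$ meeting all three competing constraints (exact equality with $f$ on $\Delta_{5/4}$; global $C^{1,\alpha_0}$-regularity with the stated norm; and closeness to $u_\varep$ on $\Sigma_{5/4}$), together with the careful tracking of the approximate-corrector correction needed to accommodate the affine subtraction $u_\varep \mapsto u_\varep - \ell$, which breaks the homogeneous $\mathcal{L}_\varep$ equation.
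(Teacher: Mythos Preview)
Your strategy is in the right spirit---mirror the interior Lemma~\ref{i-L-lemma-1} by regularizing the data and splitting $u_\varep-w$ through an intermediate $v_\varep$---but it differs from the paper's route and contains two slips.

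First, a misreading: the infimum on the right-hand side of~(\ref{b-L-2-0}) runs only over constants $q\in\mathbb{R}^m$, not over affine functions $Mx+q$. With $M=0$ you have $\mathcal{L}_\varep(u_\varep-q)=0$ exactly, so the entire detour through $\varep\chi_T(x/\varep)M$ is unnecessary. The paper simply subtracts a constant at the end.

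Second, $D_{5/4}$ is \emph{not} a $C^{1,\alpha}$ domain: by~(\ref{Delta}) it is a graph cylinder with genuine corners where the lateral wall $\{|x'|=5/4\}$ meets the top and bottom faces. You therefore cannot apply Theorem~\ref{rate-theorem-1} or~(\ref{H-estimate}) on $D_{5/4}$ as written; you must first pass to a smoothed $C^{1,\alpha}$ domain $\widetilde\Omega$ with $D_1\subset\widetilde\Omega\subset D_{3/2}$ whose boundary still contains a portion of the graph.

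The paper avoids both your mollification and the corner-gluing you flag as ``the main obstacle'' by a different regularization mechanism. It constructs a nested family of $C^{1,\alpha}$ domains $\Omega_{1+t}$ with $D_1\subset\Omega_1\subset\Omega_{1+t}\subset D_{3/2}$, defines $w$ as the $\mathcal{L}_0$-harmonic extension of $u_\varep$ in the \emph{larger} domain $\Omega_{1+t}$, and then takes $v_\varep$ to be $\mathcal{L}_\varep$-harmonic in $\Omega_1$ with $v_\varep=w$ on $\partial\Omega_1$. The $C^{1,\kappa}$ regularity of $w$ on $\partial\Omega_1$ needed to invoke Theorem~\ref{rate-theorem-1} comes for free from \emph{interior} $C^{1,\kappa}$ estimates for $\mathcal{L}_0$ (since $\partial\Omega_1$ lies at distance $\sim t$ from $\partial\Omega_{1+t}$) together with the boundary $C^{1,\kappa}$ estimate along the graph portion where $w=u_\varep=f$. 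The smallness of $u_\varep-w$ on $\partial\Omega_1$ follows from the H\"older continuity of $u_\varep$ across a gap of width $t$. In short, the paper lets the PDE in a slightly enlarged domain do the smoothing that you accomplish by mollification-plus-gluing; the resulting balance $t\sim[\omega(\varep)]^{2/3}$ and final exponent are the same.

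Your route can be completed after the two fixes above, and has the merit of tracking the interior proof closely; the paper's route is cleaner precisely because it sidesteps the delicate boundary construction.
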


\begin{proof}
By rescaling we may assume  $r=1$.
%By subtracting a constant we may also assume $\int_{D_2} u_\varep=0$.
For each $t\in [0,1/4)$, we construct a bounded $C^{1,\alpha}$ domain $\Omega_{1+t}$ in $\mathbb{R}^d$
such that (1) $D_1\subset\Omega_1 \subset \Omega_{1+t}\subset D_{3/2}$,
(2) there exists a $C^{1, \alpha}$ diffeomorphism $\Lambda_t: \partial\Omega_1 \to \partial\Omega_{1+t}$
with uniform bounds and
the property that $|\Lambda_t (x)-x|\le C\, t$ for any $x\in \partial\Omega_1$,
and (3) for each $x\in \partial\Omega_1$, $B(x, ct)\cap D_2 \subset \Omega_{1+t}$.

Let $w=w_t$ be the solution of Dirichlet problem:
$\mathcal{L}_0 (w)=0$ in $\Omega_{1+t}$ and $w=u_\varep$ on $\partial\Omega_{1+t}$.
Note that $\mathcal{L}_0 (w) =0$ in $D_1$ and $w=f$ on $\Delta_1$.
We will show that $w$ satisfies the estimate (\ref{b-L-2-0}) for some suitable choice of $t$.

Let $v_\varep$ be the solution of $\mathcal{L}_\varep (v_\varep)=0$ in $\Omega_1$
and $v_\varep=w$ on $\partial \Omega_1$.
Since $\mathcal{L}_\varep (u_\varep -v_\varep)=0$ in $\Omega_1$,
by the H\"older estimate (\ref{H-estimate}) for $\mathcal{L}_\varep$,
\begin{equation}\label{b-L-2-1}
\aligned
\| u_\varep -v_\varep\|_{L^2(D_1)}
& \le C\, \| u_\varep -w\|_{C^\kappa (\partial \Omega_1)}
\le C\, t^{\gamma-\kappa} \, \| u_\varep\|_{C^\gamma (D_{3/2})}\\
&\le C\, t^{\gamma-\kappa}
\left\{ \left(\average_{D_{2}} |u_\varep|^2 \right)^{1/2}
+\| f\|_{L^\infty(\Delta_2)}
+\| \nabla_{tan} f\|_{L^\infty(\Delta_{2})} \right\},
\endaligned
\end{equation}
where $0<\kappa<\gamma<1$.
The fact that $\Lambda_t (x)-x|\le C\, t$ for $t\in \partial\Omega_1$ is used
for the second inequality in (\ref{b-L-2-1}).
Next, by Theorem \ref{rate-theorem-1}, we see that
\begin{equation}\label{b-L-2-2}
\aligned
\| v_\varep -w\|_{L^2(D_1)}
& \le C\,  \big[\omega(\varep)\big]^{2/3} \, \| w\|_{C^{1, \kappa}({\partial\Omega_1})}\\
& \le C\, \big[\omega(\varep)\big]^{2/3}
t^{\gamma-1-\kappa} \left\{ \| w\|_{C^{\gamma} (\Omega_{1+t})} 
+\|  g\|_{C^{ 1,\kappa}(\Delta_2)} \right\}\\
&\le C\, \big[\omega(\varep)\big]^{2/3}
t^{\gamma-1-\kappa} \left\{ \| u_\varep \|_{C^{\gamma} (\Omega_{1+t})} +\|  g\|_{C^{1,\kappa}(\Delta_2)} \right\}\\
&\le C\, \big[\omega(\varep)\big]^{2/3}
t^{\gamma-1-\kappa} \left\{ \left(\average_{D_2} |u_\varep|^2\right)^{1/2}
+ \| g\|_{C^{1, \kappa}(\Delta_2)} \right\},
\endaligned
\end{equation}
where we have used  the boundary $C^{1,\kappa}$ estimates for $\mathcal{L}_0$ 
for the second inequality and H\"older estimates for the third.
We point out that for the second inequality in (\ref{b-L-2-2}) we also have used the fact
$B(x, ct)\cap D_2\subset \Omega_{1+t}$ for any $x\in \partial\Omega_1$.
It follows from (\ref{b-L-2-1}) and (\ref{b-L-2-2}) that
$$
\aligned
\| u_\varep -w\|_{L^2(D_1)}
& \le \| u_\varep -v_\varep \|_{L^2(D_1)} +\| v_\varep -w\|_{L^2(D_1)}\\
&\le C t^{\gamma-\kappa}
\left\{ 1+ t^{-1} \big[ \omega (\varep) \big]^{2/3}\right\}
\left\{ \left(\average_{D_2} |u_\varep|^2\right)^{1/2}
+\| g\|_{C^{1, \kappa}(\Delta_2)} \right\},\\
&\le C \, \big[\omega (\varep)\big]^{\frac{2}{3}-\delta} 
\left\{ \left(\average_{D_2} |u_\varep|^2\right)^{1/2}
+\| g\|_{C^{1, \kappa}(\Delta_2)} \right\},
\endaligned
$$
where we have chosen $t=c\big[\omega (\varep)\big]^{2/3}\in (0,1/4)$, $\kappa=(3/4)\delta$ and
$\gamma=1-(3/4)\delta$.
This yields the estimate (\ref{b-L-2-0}), as $\mathcal{L}_\varep (u_\varep -q)=0$ in $D_2$ for any
$q\in \mathbb{R}^m$.
\end{proof}

We are now ready to prove the boundary Lipschitz estimates for $\mathcal{L}_\varep$.

\begin{theorem}\label{b-L-theoem-3}
Suppose that $A(y)$ satisfies the same conditions as in Theorem \ref{main-theorem-Lip}.
Let $u_\varep$ be a weak solution of
$\mathcal{L}_\varep (u_\varep)=0$ in $D_{2r}$ and $u_\varep =f$ on $\Delta_{2r}$
for some $0<r\le 1$.
Then
\begin{equation}
\aligned
\|\nabla u_\varep\|_{L^\infty(D_r)}
&\le C\bigg\{ \frac{1}{r} \left(\average_{D_{2r}} |u_\varep|^2 \right)^{1/2}
+r \| f\|_{L^\infty(\Delta_{2r})}\\
& \qquad \qquad \qquad+\|\nabla_{\tan} f\|_{L^\infty(\Delta_{2r})}
+r^\beta \|\nabla_{tan} f\|_{C^{0,\beta} (\Delta_{2r})} \bigg\},
\endaligned
\end{equation}
where $\beta=\alpha/2$ and $C$ depends only on $(\alpha,K_0)$ and $A$.
\end{theorem}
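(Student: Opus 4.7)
The plan is to adapt the Campanato-type iteration of Theorem~\ref{g-theorem-1} to the boundary setting, using Lemma~\ref{b-L-lemma-1} (improvement of flatness for the homogenized solution including boundary data) in place of Lemma~\ref{i-L-lemma-0}, and Lemma~\ref{b-L-lemma-2} (boundary approximation by a solution of $\mathcal L_0$) in place of Lemma~\ref{i-L-lemma-1}. After rescaling I reduce to $r=1$, and by translation it suffices to bound $|\nabla u_\varep(0)|$ when $0\in \Delta_1$; points of $D_r$ bounded away from $\Delta_r$ are handled by translating this estimate and combining with the interior bound from Theorem~\ref{i-L-theorem}.

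For $t\in (0,1/2]$ I introduce the boundary-adapted flatness
\begin{equation*}
H(t):=\frac{1}{t}\inf_{M,q}\Big\{\Big(\average_{D_t}|u_\varep-Mx-q|^2\Big)^{1/2}+\|f-Mx-q\|_{L^\infty(\Delta_t)}+t\|\nabla_{tan}(f-Mx-q)\|_{L^\infty(\Delta_t)}+t^{1+\beta}[\nabla_{tan} f]_{C^{0,\beta}(\Delta_t)}\Big\},
\end{equation*}
which is the analogue of $G(t)$ from Lemma~\ref{b-L-lemma-1} applied to $u_\varep$. At each dyadic scale $r_j=\theta^j$ with $\varep<r_j\le 1/2$, I first subtract the near-minimizing affine function $M_{j-1}x+q_{j-1}$ from the previous scale, apply Lemma~\ref{b-L-lemma-2} to $u_\varep-M_{j-1}x-q_{j-1}$ on $D_{2r_{j-1}}$, and obtain a function $w_j$ with $\mathcal L_0(w_j)=0$ in $D_{r_{j-1}}$ and matching boundary data on $\Delta_{r_{j-1}}$, satisfying
\begin{equation*}
\Big(\average_{D_{r_{j-1}}}|u_\varep-M_{j-1}x-q_{j-1}-w_j|^2\Big)^{1/2}\le C\,\omega(\varep/r_{j-1})^{\frac23-\delta}\, r_{j-1}\bigl(H(2r_{j-1})+|M_{j-1}|\cdot 0+\text{data}\bigr).
\end{equation*}
Lemma~\ref{b-L-lemma-1} applied to $w_j$ then provides an affine $M_j x+q_j$ contracting its $G$-quantity by $1/2$ at scale $\theta r_{j-1}=r_j$. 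Combining these two estimates, and using that subtracting affine functions leaves $[\nabla_{tan}f]_{C^{0,\beta}}$ invariant, I obtain an iteration
\begin{equation*}
H(r_{j+1})\le \tfrac12 H(r_j)+C\,\omega(\varep/r_j)^{\frac23-\delta}\bigl(H(r_{j-1})+|M_{j-1}|+K\bigr),\qquad |M_{j+1}-M_j|\le C\bigl(H(r_j)+H(r_{j+1})\bigr),
\end{equation*}
where $K$ collects the $f$-dependent boundary data terms from the right-hand side of the theorem.

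These inequalities fit the hypotheses of Lemma~\ref{main-lemma-1} with $\eta_j:=C\,\omega(\varep\theta^{-j})^{\frac23-\delta}$; Lemma~\ref{Dini-lemma} combined with the hypothesis $N>5/2$ on $\rho$ ensures $\sum_j \eta_j<\infty$. Lemma~\ref{main-lemma-1} then yields the uniform bound $H(r_j)+|M_j|\le C(H(1)+K)$ for all $j\le \ell$, where $\theta^{\ell+1}<\varep\le\theta^\ell$. At the smallest scale I set $v(y):=\varep^{-1}\{u_\varep(\varep y)-(M_\ell\varep y+q_\ell)\}$, so $\mathcal L_1(v)=0$ in the rescaled domain and $v$ has boundary data of bounded $C^{1,\beta}$ norm. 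Since $A$ is H\"older continuous, classical boundary $C^{1,\beta}$ estimates for $\mathcal L_1$ give $|\nabla v(0)|\le C(\|v\|_{L^2}+\text{data})$, and undoing the rescaling produces $|\nabla u_\varep(0)|\le |M_\ell|+C(H(r_\ell)+K)$, which is dominated by the right-hand side of the theorem.

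The main obstacle will be the bookkeeping across scales of the four separate terms defining $H(t)$: the bulk $L^2$ deviation, the $L^\infty$ boundary defect, and the $L^\infty$ and $C^{0,\beta}$ norms of $\nabla_{tan}(f-Mx-q)$. Their precise $t$-weights are exactly what makes the contraction in Lemma~\ref{b-L-lemma-1} possible, and I must verify that Lemma~\ref{b-L-lemma-2}, which only controls the bulk $L^2$ error, transfers cleanly after the subtraction of $M_{j-1}x+q_{j-1}$ without polluting the boundary norms. Once the iteration is set up with the correct affine renormalizations, the quantitative content is supplied by Lemmas~\ref{b-L-lemma-1}, \ref{b-L-lemma-2}, \ref{main-lemma-1} and \ref{Dini-lemma}, and the argument closely parallels the interior proof of Lemma~\ref{i-L-lemma-3}.
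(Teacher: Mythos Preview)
Your outline is essentially the paper's own proof: define the boundary flatness $H(t)$, combine Lemma~\ref{b-L-lemma-1} with Lemma~\ref{b-L-lemma-2} to obtain the iteration $H(\theta t)\le \tfrac12 H(t)+C[\omega(\varep/t)]^{\frac23-\delta}\{\cdots\}$, feed the resulting sequences $F_j=H(r_j)$, $p_j=|M_j|$ into Lemma~\ref{main-lemma-1} with the Dini summability from Lemma~\ref{Dini-lemma}, and close at scale $\varep$ by a blow-up and classical boundary $C^{1,\beta}$ estimates for $\mathcal L_1$, together with the interior Lipschitz bound.

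One correction: you cannot apply Lemma~\ref{b-L-lemma-2} to $u_\varep-M_{j-1}x-q_{j-1}$, because $\mathcal L_\varep$ does \emph{not} annihilate affine functions (only constants). The paper applies Lemma~\ref{b-L-lemma-2} to $u_\varep$ itself; the right-hand side then contains $\inf_{q}\big(\average_{D_{2t}}|u_\varep-q|^2\big)^{1/2}$, which you bound by $r_{j-1}\big(F_{j-1}+p_{j-1}\big)$ via the triangle inequality after inserting $M_{j-1}x$. This produces exactly the recursion you wrote, so the slip is harmless once corrected.
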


\begin{proof}
By rescaling we may assume that $r=1$.
Let
$$
\aligned
H(t)= & t^{-1} \inf_{\substack{M\in \mathbb{R}^{m\times d}\\ q\in \mathbb{R}^d}}
\bigg\{ \left(\average_{D_t} |u_\varep-Mx-q|^2\right)^{1/2}
+\| f-Mx-q\|_{L^\infty(\Delta_t)}\\
&\qquad  + t \|\nabla_{tan} \big (f-Mx-q\big)\|_{L^\infty(\Delta_t)}
+t^{1+\beta} \|\nabla_{tan} \big (f-Mx-q\big)\|_{C^{0, \beta}(\Delta_t)}
\bigg\},
\endaligned
$$
where $0<t\le 1$.
For each $\varep<t<1$, let $w=w_t$ be a solution of $\mathcal{L}_0 (w)=0$
in $D_t$ with $w=f$ on $\Delta_t$, given by Lemma \ref{b-L-lemma-2}.
For $0<s\le t$, let $G(s)$ be defined as $H(t)$, but with $u_\varep$ replaced by $w$ and $t$ replaced by $s$.
Observe that
$$
\aligned
H(\theta t) 
&\le G(\theta t) + \frac{1}{\theta t} \left\{ \average_{D_{\theta t}} |u_\varep -w|^2 \right\}^{1/2}\\
& \le \frac12 G(t) +\frac{1}{\theta t} \left\{ \average_{D_{\theta t}} |u_\varep -w|^2 \right\}^{1/2}\\
&\le \frac12 H(t)
+ \frac{C}{t} \left\{ \average_{D_{t}} |u_\varep -w|^2 \right\}^{1/2},
\endaligned
$$
where we have used Lemma \ref{b-L-lemma-1} for the second inequality.
This, together with Lemma \ref{b-L-lemma-2}, gives
\begin{equation}\label{b-L-3-10}
\aligned
H(\theta t) \le \frac12 H(t)
+ C \big[  \omega (\varep/t)\big]^{\frac23-\delta}
 &\bigg\{   t^{-1} \inf_{q\in \mathbb{R}^m}\bigg[
\bigg( \average_{D_{2t}}  |u_\varep-q |^2 \bigg)^{1/2} +\| f-q\|_{L^\infty(\Delta_{2t})} \bigg]\\
&  + \| \nabla_{tan} f\|_{L^\infty(\Delta_{2t})}
+t^{\beta } \|\nabla_{tan} f\|_{C^{0, \beta }
(\Delta_{2t})} \bigg\}
\endaligned
\end{equation}
for any $\varep<t\le 1$.
Let  $r_j=\theta^{j+1}$ for $0\le j\le \ell$, where $\ell$ is chosen so that
$\theta^{\ell+2}<\varep\le \theta^{\ell +1}$.
Let
$$
F_j =H(r_j) \quad \text{ and } \quad p_j=|M_j|,
$$
 where $M_j \in \mathbb{R}^{m\times d}$ is a matrix such that
$$
\aligned
H(r_j)
= & r_j^{-1} \inf_{ q\in \mathbb{R}^d}
\bigg\{ \left(\average_{D_{r_j}} |u_\varep-M_jx-q|^2\right)^{1/2}
+\| f-M_j x-q\|_{L^\infty(\Delta_{r_j})}\\
&\qquad  + r_j \|\nabla_{tan} \big (f-M_jx-q\big)\|_{L^\infty(\Delta_{r_j})}
+r_j^{1+\beta} \|\nabla_{tan} \big (f-M_jx-q\big)\|_{C^{0, \beta}(\Delta_{r_j})}
\bigg\}.
\endaligned
$$
In view of (\ref{b-L-3-10}) we obtain 
\begin{equation}\label{b-L-3-20}
F_{j+1} \le \frac12 F_j + C \big[ \omega (\varep \theta^{-j-1})\big]^{\frac23-\delta} \big\{ F_{j-1} +p_{j-1} \big\}.
\end{equation}
Also observe that since $D_{r} $ satisfies the interior cone condition,
$$
\aligned
& |M_{j+1} -M_j|
\le \frac{C}{r_{j+1}} \inf_{q\in \mathbb{R}^m}
\left\{ \average_{D_{r_{j+1}}} |(M_{j+1} -M_j) x-q |^2 \right\}^{1/2}\\
&\le \frac{C}{r_{j+1}}
\inf_{q\in \mathbb{R}^m}
\left\{ \average_{D_{r_{j+1}}} |u_\varep - M_{j+1}x-q|^2 \right\}^{1/2}
+
\frac{C}{r_{j+1}}\inf_{q\in \mathbb{R}^m}
\left\{ \average_{D_{r_{j+1}}} |u_\varep - M_{j}x-q|^2 \right\}^{1/2}\\
& \le  C \, ( F_j +F_{j+1} ).
\endaligned
$$
It follows that
\begin{equation}
p_{j+1} =|M_{j+1}|\le |M_j| + C\, (F_j + F_{j+1})
 = p_j +C\, (F_j + F_{j+1}).
\end{equation}
Recall that  the condition (\ref{decay-condition}) implies that
$$
\sum_{j=0}^\ell \big[ \omega (\varep \theta^{-j-1})\big]^{\frac23 -\delta} \le C
\int_0^1 \big[ \omega (t) \big]^{\frac23 -\delta} \, \frac{dt}{t} <\infty,
$$
for some $\sigma, \delta \in (0,1)$.
This allows us to apply Lemma \ref{main-lemma-1}  to obtain
$$
\aligned
F_j  +p_j & \le C\, \big\{ p_0 +F_0 +F_1 \big\}\\
&\le C \left\{ \left(\average_{D_1} |u_\varep|^2 \right)^{1/2}
+\| f\|_{C^{1, \beta} (\Delta_1)} \right\}
\endaligned
$$
for any $0\le j\le \ell$.
As a result, we see that for any $\varep< t<1/4$,
\begin{equation}\label{b-L-3-30}
\aligned
\left(\average_{D_t} |\nabla u_\varep|^2\right)^{1/2}
&\le 
\frac{C}{t} \inf_{q\in \mathbb{R}^m}
\left\{ \left(\average_{D_{2t}} |u_\varep -q|^2\right)^{1/2}
+\| f-q \|_{L^\infty(\Delta_{2t})} \right\} + C \| \nabla_{tan} f \|_{L^\infty (\Delta_{2t})}\\
&\le C \left\{ \left(\average_{D_1} |u_\varep|^2\right)^{1/2} 
+\| f\|_{C^{1,\beta}(\Delta_1)} \right\},
\endaligned
\end{equation}
where we have used  Caccipoli's inequality for the first inequality.

Finally, since $A(y)$ is H\"older continuous, we may use apply the classical
boundary Lipschitz estimates for $\mathcal{L}_1$ and a blow-up argument to obtain 
$$
\aligned
\|\nabla u_\varep\|_{L^\infty(D_\varep)}
& \le \frac{C}{\varep} \inf_{q\in \mathbb{R}^m}
\left\{ \left(\average_{D_{2\varep}} |u_\varep -q|^2\right)^{1/2} +\| f-q \|_{L^\infty(\Delta_{2\varep})} \right\}
+C\ \| \nabla_{tan} f \|_{C^\sigma (\Delta_{2\varep})}\\
&\le C \left\{ \left(\average_{D_1} |u_\varep|^2\right)^{1/2} 
+\| f\|_{C^{1,\beta}(\Delta_1)} \right\},
\endaligned
$$
where we have used (\ref{b-L-3-30}) with $t=2\varep$ for the second inequality.
Consequently, we see that 
\begin{equation}\label{b-L-3-40}
\left(\average_{D_t} |\nabla u_\varep|^2\right)^{1/2}
\le C \left\{ \left(\average_{D_1} |u_\varep|^2\right)^{1/2} 
+\| f\|_{C^{1,\beta}(\Delta_1)} \right\}
\end{equation}
holds for any $0<t<1/4$.
This, together with the interior Lipschitz estimates proved in Section 4, yields
\begin{equation}\label{b-L-3-50}
\| \nabla u_\varep \|_{L^\infty(D_1)}
\le C \left\{ \left(\average_{D_2} |u_\varep|^2\right)^{1/2} 
+\| f\|_{C^{1,\beta}(\Delta_2)} \right\}.
\end{equation}
The proof is complete.
\end{proof}

We now give the proof of Theorem \ref{main-theorem-Lip}.

\begin{proof}[\bf Proof of Theorem \ref{main-theorem-Lip}]
It suffices to show that if $\mathcal{L}_\varep (u_\varep)=F$ in $D_{2r}$
and $u_\varep=f $ on $\Delta_{2r}$ for some $0<r<1$, then
\begin{equation}\label{b-L-local}
\aligned
\| \nabla u_\varep\|_{L^\infty(D_r)}
\le C r^{-1} \| u_\varep\|_{L^\infty(D_{2r})}
& +C \|\nabla_{tan} f\|_{L^\infty(\Delta_{2r})}  +C r^\beta \|\nabla_{\tan} f\|_{C^{0, \beta}(\Delta_{2r})}\\
&+ Cr^\beta \sup_{\substack{x\in D_{2r}\\ 0<t<r}}
t^{1-\beta} \average_{B(x,t)\cap D_{2r}}
|F|.
\endaligned
\end{equation}
Estimate (\ref{Lip-estimate-0}) follows from (\ref{b-L-local}) and the interior Lipschitz estimate 
 by a simple covering argument.

To prove (\ref{b-L-local}), we may assume that $r=1$ and $d\ge 3$.
The case $F=0$ is already proved in the last lemma.
The general case may be handled by the use of Green functions.
Indeed, let $\Omega$ be a bounded $C^{1, \alpha}$ domain in $\mathbb{R}^d$
such that $D_{3/2}\subset \Omega\subset D_2$.
Let $G_\varep (x,y)$ denote the matrix of Green functions for $\mathcal{L}_\varep $ in 
$\Omega$, with pole at $y$.
By the boundary H\"older estimates in \cite{Shen-2014},
we know $|G_\varep (x,y)|\le C\, |x-y|^{2-d}$ for any $x,y\in \Omega$.
Since $\mathcal{L}_\varep \big( G_\varep (\cdot, y)\big)=0$ 
in $\Omega \setminus \{ y\}$ and $G(x, y)=0$ for $x\in \partial\Omega$,
we may use the boundary Lipschitz estimate in the last lemma to show that
$|\nabla_x G_\varep (x,y)|\le C\, |x-y|^{1-d}$ for any $x,y\in \Omega$.
One then considers $u_\varep -v_\varep$ in $D_2$, where
$v_\varep (x)=\int_\Omega G_\varep (x,y) F(y)\, dy$.
The rest of the argument is similar to that in the proof of Theorem \ref{i-L-theorem}.
We omit the details.
\end{proof}

\begin{proof}[\bf Proof of Theorem \ref{main-theorem-max}]

The non-tangential maximal function of $u_\varep$ is defined by
\begin{equation}\label{non-tan}
(u_\varep)^* (Q)=\sup \big\{ |u_\varep (x)|: \
x\in \Omega \text{ and } |x-Q|<C_0\,  \text{\rm dist} (x, \partial\Omega) \big\}
\end{equation}
for $Q\in \partial\Omega$,
where $C_0=C_0(\Omega)$ is sufficiently large.
Suppose that $\mathcal{L}_\varep (u_\varep) =0$ in $\Omega$ and $u_\varep =f$ on $\partial\Omega$.
It is well known that the estimate (\ref{P-estimate}) implies that
$\| u_\varep\|_{L^\infty (\Omega)} \le C\, \| f\|_{L^\infty(\partial\Omega)}$, and
$$
(u_\varep)^* (Q) \le C\, \mathcal{M}_{\partial\Omega} (f) (Q) \qquad \text{ for any } Q\in \partial\Omega,
$$
where $\mathcal{M}_{\partial\Omega} (f)$ denotes the Hardy-Littlewood maximal 
function of $f$ on $\partial\Omega$.
It follows that
$$
\| (u_\varep)^*\|_{L^p(\partial\Omega)} \le C_p \, \| f\|_{L^p(\partial\Omega)}
$$
for any $1<p<\infty$.
\end{proof}

%%%%%%%%%%%%%%%%%%%%%%%%%%%%%%%%%%%%%%%%%%%%%%%%

%%%%%%%%%%%%%%%%%%%%%%%%%%%%%
%%%%%%%%%%%%%%%%%%

\section{Boundary Lipschitz estimates with Neumann conditions and proof
of Theorem \ref{main-theorem-Lip-N}}
\setcounter{equation}{0}

In this section we establish the uniform Lipschitz estimates with Neumann boundary conditions
and give the proof of Theorem \ref{main-theorem-Lip-N}.
Throughout the section we will assume that $A$ satisfies the same conditions as 
in Theorem \ref{main-theorem-Lip-N}.

Let $D_r$ and $\Delta_r$ be defined as in  (\ref{Delta}) and $(\alpha, K_0)$ given in (\ref{phi}).

\begin{lemma}\label{Lip-N-lemma-1}
Suppose that $\mathcal{L}_0 (w)=0$ in $D_{2r}$ and $\frac{\partial w}{\partial \nu_0}=g$
on $\Delta_{2r}$. 
Let
$$
\aligned
\Psi(t)
= &\frac{1}{t}\inf_{\substack{M\in \mathbb{R}^{m\times d}\\ q\in \mathbb{R}^d}}
\bigg\{ \left(\average_{D_t} |w-Mx -q|^2\right)^{1/2}
+t \left\| \frac{\partial}{\partial\nu_0} \big( w-Mx\big) \right\|_{L^\infty(\Delta_t)}\\
&\qquad\qquad\qquad\qquad \qquad\qquad
+t^{1+\beta} \left\| \frac{\partial}{\partial \nu_0} \big (w-Mx\big) \right\|_{C^{0,\beta} (\Delta_t)}
\bigg\},
\endaligned
$$
for $0<t\le r$, where $\beta=\alpha/2$.
Then there exists $\theta\in (0,1/4)$, depending only on $\mu$, $\alpha$ and
$K_0$, such that
\begin{equation}\label{NP-Lip-8-1}
 \Psi(\theta r) \le (1/2) \Psi (r).
\end{equation}
\end{lemma}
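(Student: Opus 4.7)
The approach will mirror the proof of Lemma \ref{b-L-lemma-1}, using boundary $C^{1,\beta}$ Schauder estimates for the constant-coefficient operator $\mathcal{L}_0$ with Neumann boundary data on a $C^{1,\alpha}$ domain, together with the invariance $\mathcal{L}_0(Mx+q)=0$ for any $M\in\mathbb{R}^{m\times d}$ and $q\in\mathbb{R}^m$. The main difference with the Dirichlet case is that the conormal derivative of $Mx$ is $n(x)\widehat{A}M$, which must be subtracted from $g$ when we pass to a ``corrected'' solution.

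By a standard rescaling $y\mapsto w(ry)$, we may assume $r=1$; the rescaled domain has boundary function with the same $C^{1,\alpha}$ bounds, and the Neumann data acquires an innocuous factor of $r$. Let $(M_0,q_0)$ be an (almost) infimizing pair for $\Psi(1)$ and set $\widetilde{w}(x)=w(x)-M_0 x-q_0$. Then
\begin{equation*}
\mathcal{L}_0\widetilde{w}=0 \text{ in }D_2,\qquad \frac{\partial\widetilde{w}}{\partial\nu_0}=g-n\widehat{A}M_0 \text{ on }\Delta_2.
\end{equation*}
By the classical boundary $C^{1,\beta}$ Schauder estimate for the Neumann problem for $\mathcal{L}_0$ in $C^{1,\alpha}$ domains (with $\beta=\alpha/2<\alpha$),
\begin{equation*}
\|\widetilde{w}\|_{C^{1,\beta}(D_{1/2})}\le C\left\{\|\widetilde{w}\|_{L^2(D_1)}+\left\|\tfrac{\partial\widetilde{w}}{\partial\nu_0}\right\|_{C^{0,\beta}(\Delta_1)}\right\}\le C_0\,\Psi(1),
\end{equation*}
where the second inequality uses the defining expression for $\Psi(1)$ evaluated at $(M_0,q_0)$.

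I would then test the infimum in $\Psi(\theta)$ against the affine approximation $M_\ast=\nabla w(0)=M_0+\nabla\widetilde{w}(0)$ and $q_\ast=w(0)$. The combination $w(x)-M_\ast x-q_\ast=\widetilde{w}(x)-\widetilde{w}(0)-\nabla\widetilde{w}(0)\cdot x$ is exactly the first-order Taylor remainder of $\widetilde{w}$ at $0$, so it is bounded pointwise on $D_\theta$ by $[\nabla\widetilde{w}]_{C^{0,\beta}(D_\theta)}|x|^{1+\beta}$. The corresponding Neumann piece is
\begin{equation*}
\frac{\partial(w-M_\ast x)}{\partial\nu_0}(x)=n(x)\widehat{A}\bigl[\nabla w(x)-\nabla w(0)\bigr],
\end{equation*}
which vanishes at $0$; its $L^\infty(\Delta_\theta)$-norm is therefore at most $C\theta^{\beta}[\nabla w]_{C^{0,\beta}(D_\theta)}$, and its $C^{0,\beta}(\Delta_\theta)$-norm is at most $C[\nabla w]_{C^{0,\beta}(D_\theta)}$ (using that $n$ itself is $C^{0,\beta}$ on $\Delta_1$, with bound depending only on $\alpha$ and $K_0$). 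After dividing by $\theta$ as in the definition of $\Psi$, each of the three pieces is dominated by $C\theta^{\beta}\|\widetilde{w}\|_{C^{1,\beta}(D_{1/2})}$, so
\begin{equation*}
\Psi(\theta)\le C_1\,\theta^{\beta}\,\Psi(1).
\end{equation*}
Choosing $\theta\in(0,1/4)$ small enough that $C_1\theta^{\beta}\le 1/2$ yields (\ref{NP-Lip-8-1}).

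The only genuinely non-trivial ingredient is the boundary $C^{1,\beta}$ Schauder estimate for the Neumann problem on a $C^{1,\alpha}$ domain; this is classical for constant-coefficient elliptic systems, but one should be careful that the constant depends on the boundary only through $(\alpha,K_0)$, as required by the statement. Once this is in hand the rest is a direct analogue of the Dirichlet argument in Lemma \ref{b-L-lemma-1}, with the single bookkeeping change that affine perturbations modify the Neumann data by $n\widehat{A}M$ rather than leaving boundary values unchanged.
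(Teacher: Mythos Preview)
Your proof is correct and follows exactly the approach the paper indicates: the paper's proof simply states that the lemma follows from boundary $C^{1,\alpha}$ Schauder estimates for the Neumann problem for $\mathcal{L}_0$, with an argument parallel to the Dirichlet case in Lemma~\ref{b-L-lemma-1}, and leaves the details to the reader. You have supplied precisely those details, including the correct bookkeeping that subtracting $Mx$ shifts the conormal data by $n\widehat{A}M$ and the observation that $n\in C^{0,\alpha}$ with bounds depending only on $(\alpha,K_0)$.
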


\begin{proof}
The lemma follows from boundary $C^{1,\alpha}$ estimates with Neumann conditions 
for second-order elliptic systems with constant coefficients.
The argument is similar to that in the case of Dirichlet condition.
We leave the details to the reader.
\end{proof}

\begin{lemma}\label{decay-rate-lemma-8}
Let $\sigma\in (0,1)$ and
\begin{equation}\label{NP-eta}
\eta (t) =\eta_\sigma (t)= \left\{ \Theta_\sigma (t^{-1}) + \sup_{T\ge t^{-1} }\langle |\psi-\nabla \chi_T|\rangle \right\}^{1/2}.
\end{equation}
Suppose that there exist $C_0>0$ and $N>3$ such that $\rho (R)\le C_0 \big[\log R\big]^{-N}$ for all $R\ge 2$.
Then $\int_0^1 \eta (t)\, \frac{dt}{t} <\infty$.
\end{lemma}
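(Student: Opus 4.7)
The plan is to combine the quantitative bounds on $\Theta_\sigma$ and $\langle|\psi-\nabla\chi_T|\rangle$ already recorded earlier in the paper (and used in Lemma~\ref{Dini-lemma}) to get a pointwise bound on $\eta(t)$ of the form $C[\log(1/t)]^{(1-N)/2}$, and then to reduce the integral $\int_0^1 \eta(t)\,dt/t$ to an integral of a power of $\log(1/t)$ against $dt/t$, which is handled by the substitution $u=\log(1/t)$.

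First, from the very definition of $\Theta_\sigma$, choosing $R = \sqrt{T}$ gives
$$
\Theta_\sigma(T) \le \rho(\sqrt{T}) + T^{-\sigma/2}
 \le C_\sigma \bigl[\log T\bigr]^{-N}, \qquad T\ge 2,
$$
where the second inequality uses the hypothesis $\rho(R) \le C_0[\log R]^{-N}$ and absorbs the algebraic term $T^{-\sigma/2}$ into a larger multiple of $[\log T]^{-N}$.

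Second, by \cite[Theorem 6.6]{Shen-2014}, we have
$$
\langle|\psi - \nabla \chi_T|\rangle \le C_\sigma \int_{T/2}^\infty \frac{\Theta_\sigma(r)}{r}\,dr
 \le C_\sigma \int_{T/2}^\infty \frac{[\log r]^{-N}}{r}\,dr
 = C_\sigma' \bigl[\log(T/2)\bigr]^{1-N},
$$
where the last equality is the substitution $u=\log r$. Taking the supremum over $T\ge t^{-1}$, and combining with the bound on $\Theta_\sigma(t^{-1})$ from the first step (which is $O([\log(1/t)]^{-N})$ and hence absorbed into $[\log(1/t)]^{1-N}$ for small $t$), we conclude
$$
\eta(t) = \eta_\sigma(t) \le C_\sigma \bigl[\log(1/t)\bigr]^{(1-N)/2}, \qquad t\in(0,1/2).
$$

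Third, split $\int_0^1 \eta(t)\,dt/t = \int_0^{1/2} + \int_{1/2}^1$; the tail is trivially bounded since $\eta$ is bounded on compact subsets of $(0,1]$, and for the singular part we substitute $u=\log(1/t)$ to get
$$
\int_0^{1/2} \bigl[\log(1/t)\bigr]^{(1-N)/2}\,\frac{dt}{t} = \int_{\log 2}^\infty u^{(1-N)/2}\,du,
$$
which is finite precisely when $(1-N)/2 < -1$, i.e., $N>3$ — exactly the standing hypothesis. This completes the proof. There is no real obstacle here; the only thing to be a bit careful about is that we need the exponent $(1-N)/2$ (coming from the square root in the definition of $\eta$) rather than the exponent $(2/3-\delta)(1-N)$ that appeared in Lemma~\ref{Dini-lemma}, which is why the threshold is $N>3$ instead of $N>5/2$.
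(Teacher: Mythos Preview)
Your proof is correct and follows essentially the same approach as the paper: bound $\Theta_\sigma(T)$ and $\langle|\psi-\nabla\chi_T|\rangle$ exactly as in the proof of Lemma~\ref{Dini-lemma}, combine to get $\eta(t)\le C[\log(1/t)]^{(1-N)/2}$, and observe that this is integrable against $dt/t$ precisely when $N>3$. The paper's version is terser (it simply says the lemma ``follows readily'' from the pointwise bound), but the argument is the same.
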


\begin{proof}
Recall that if there exist $C_0>0$ and $N>1$ such that
$\rho(R)\le C_0 \big[ \log R \big]^{-N}$ for all $R\ge 2$, then 
$$
\Theta_\sigma (T)\le C_\sigma \big[\log T\big]^{-N}\quad
\text{ and } \quad
\langle |\psi-\nabla \chi_T| \rangle
\le C\, \big[\log T\big]^{-N+1}
$$
 for all $T\ge 2$ (see the proof of Lemma \ref{Dini-lemma}). This gives
 $$
 \eta (t) \le C \, \big[\log (1/t) \big]^{(1-N)/2} \qquad \text{ for } t\in (0,1/2),
 $$
 from which the lemma follows readily.
 \end{proof}
 
\begin{lemma}\label{Lip-N-lemma-2}
Suppose that $A$ satisfies the same conditions as in Theorem \ref{main-theorem-Lip}.
Let $u_\varep$ be a weak solution of $\mathcal{L}_\varep (u_\varep)=0$ in $D_{2r}$
with $\frac{\partial u_\varep}{\partial\nu_\varep} =g$ on $\Delta_{2r}$,
where $0<\varep<r\le 1$ and $g\in C^{1,\beta} (\Delta_{2r})$.
Then there exists $w\in H^1(D_r;\mathbb{R}^m)$ such that
$\mathcal{L}_0 (w)=0$ in $D_r$, $\frac{\partial w}{\partial \nu_0} =g$
on $\Delta_{2r}$, and
\begin{equation}\label{NP-Lip-8-2}
\left\{ \average_{D_r} |u_\varep -w|^2 \right\}^{1/2}
\le C \, \eta \left( \frac{\varep}{r}\right)\bigg\{
\inf_{q\in \mathbb{R}^m}
 \left(\average_{D_{2r}} |u_\varep -q|^2\right)^{1/2}
+r\, \| g\|_{L^\infty(\Delta_{2r})}\bigg\}
\end{equation}
where $\beta=\alpha/2$ and $\eta(t)$ is given by (\ref{NP-eta}).
The constant $C$ depends only on $\alpha$, $K_0$, and $A$.
\end{lemma}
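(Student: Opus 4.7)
The plan is to adapt the argument of Lemma~\ref{b-L-lemma-2} (the Dirichlet analog) to the Neumann setting, using the Neumann convergence-rate estimate Theorem~\ref{NP-rate-theorem-2} in place of the Dirichlet estimate Theorem~\ref{rate-theorem-1}. After dilation we may assume $r=1$.

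Fix once and for all a bounded $C^{1,\alpha}$ domain $\Omega$ with connected boundary such that $D_1 \subset \Omega \subset D_{3/2}$ and $\Delta_{5/4} \subset \partial\Omega$; concretely, $\Omega$ can be taken to be $D_{5/4}$ with its top and side corners rounded off, so that the remaining ``cap'' $\Sigma := \partial\Omega \setminus \Delta_{5/4}$ is a smooth surface lying strictly inside $D_{3/2}$ at distance $\ge c_0 > 0$ from $\partial D_1$. Define the $\mathcal{L}_\varep$-conormal trace of $u_\varep$,
\[
G_\varep(x) := n(x)\cdot A(x/\varep)\nabla u_\varep(x), \qquad x\in \partial\Omega.
\]
Since $\Delta_{5/4} \subset \Delta_{2}$ and the Neumann hypothesis gives $\partial u_\varep/\partial\nu_\varep = g$ on $\Delta_{2}$, we have $G_\varep = g$ on $\Delta_{5/4}$. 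Compatibility $\int_{\partial\Omega} G_\varep = 0$ follows from the divergence theorem applied to $\mathcal{L}_\varep u_\varep = 0$ in $\Omega \subset D_{2}$. Let $w \in H^1(\Omega;\mathbb{R}^m)$ be the unique weak solution of
\[
\mathcal{L}_0 w = 0 \text{ in } \Omega, \qquad \frac{\partial w}{\partial\nu_0} = G_\varep \text{ on } \partial\Omega, \qquad \average_\Omega w = \average_\Omega u_\varep.
\]
Then $\mathcal{L}_0 w = 0$ in $D_1 \subset \Omega$, and on $\Delta_1 \subset \Delta_{5/4} \subset \Delta_{2}$, $\partial w/\partial\nu_0 = G_\varep = g$, as required.

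Now $u_\varep$ and $w$ are Neumann solutions on $\Omega$ for $\mathcal{L}_\varep$ and $\mathcal{L}_0$ respectively, with a common $L^2$ boundary datum $G_\varep$ and equal means, so Theorem~\ref{NP-rate-theorem-2} yields
\[
\|u_\varep - w\|_{L^2(\Omega)} \le C\,\eta(\varep)\,\|G_\varep\|_{L^2(\partial\Omega)}.
\]
On $\Delta_{5/4}$, $\|G_\varep\|_{L^2(\Delta_{5/4})} \le C\|g\|_{L^\infty(\Delta_{2})}$. On the cap $\Sigma$, every $y\in \Sigma$ has $B(y,c_0/2) \subset D_{3/2}$ with $\mathcal{L}_\varep u_\varep = 0$ in $B(y,c_0/2)$, so the interior Lipschitz estimate Theorem~\ref{i-L-theorem} (available here because the hypotheses of Theorem~\ref{main-theorem-Lip-N} imply those of Theorem~\ref{main-theorem-Lip}), applied after replacing $u_\varep$ by $u_\varep-q$ with $q\in\mathbb{R}^m$ arbitrary, gives
\[
\|\nabla u_\varep\|_{L^\infty(\Sigma)} \le C \inf_{q\in\mathbb{R}^m}\left(\average_{D_{2}} |u_\varep-q|^2 \right)^{1/2}.
\]
Since $|\Sigma|$ is bounded, $\|G_\varep\|_{L^2(\Sigma)} \le C\|\nabla u_\varep\|_{L^\infty(\Sigma)}$, and combining the bounds gives the desired inequality on $D_1 \subset \Omega$.

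The key structural point, and the main obstacle I would anticipate getting right, is that in contrast to the Dirichlet case --- where the ``free'' part of the boundary data is replaced by a mollified proxy of $u_\varep$ controlled purely by a Hölder norm --- the Neumann setting forces us to use the raw $\mathcal{L}_\varep$-conormal trace $G_\varep$ as the data for \emph{both} $u_\varep$ and $w$ in order to apply Theorem~\ref{NP-rate-theorem-2} without a boundary-data discrepancy. Consequently, one has to estimate $\nabla u_\varep$ in $L^\infty$ on the interior cap $\Sigma$, which is only possible because the uniform interior Lipschitz estimate (Theorem~\ref{i-L-theorem}) is already in hand from Section~4. Verifying carefully that all hypotheses of Theorem~\ref{NP-rate-theorem-2} apply to the domain $\Omega$ so-constructed (connectedness of $\partial\Omega$, the $C^{1,\alpha}$ regularity of $\Omega$, and the uniform bound $\sup_T\|\nabla\chi_T\|_\infty \le C$ from Remark~\ref{ac-remark}) is the only remaining routine but essential check.
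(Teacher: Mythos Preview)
Your overall strategy---construct an intermediate $C^{1,\alpha}$ domain $\Omega$ with $D_1\subset\Omega\subset D_{3/2}$, let $w$ be the $\mathcal{L}_0$-Neumann solution in $\Omega$ with the \emph{same} conormal data $G_\varep=n\cdot A(\cdot/\varep)\nabla u_\varep$ as $u_\varep$ and equal mean, and invoke Theorem~\ref{NP-rate-theorem-2}---is exactly the paper's strategy. The divergence is in how you bound $\|G_\varep\|_{L^2(\partial\Omega)}$, and there your argument has a gap.

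You fix $\Omega$ in advance (essentially $D_{5/4}$ with rounded top and side corners) and assert that every $y\in\Sigma:=\partial\Omega\setminus\Delta_{5/4}$ satisfies $B(y,c_0/2)\subset D_{3/2}$, so that the interior Lipschitz estimate controls $\nabla u_\varep$ pointwise on $\Sigma$. But $\Sigma$ necessarily meets the graph $\{x_d=\phi(x')\}$ along the edge $\{|x'|=5/4,\ x_d=\phi(x')\}$: the bottoms of $D_{5/4}$ and $D_{3/2}$ lie on the \emph{same} surface, so points $y\in\Sigma$ near that edge are arbitrarily close to $\partial D_{3/2}$ (and to $\partial D_2$), and $B(y,c_0/2)\subset D_{3/2}$ fails there. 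Hence Theorem~\ref{i-L-theorem} does not give a uniform bound on $|\nabla u_\varep|$ over all of $\Sigma$, and your estimate of $\|G_\varep\|_{L^2(\Sigma)}$ is not justified as written. One could attempt a repair by combining interior Lipschitz away from the edge with the boundary H\"older estimate (Lemma~\ref{b-H-lemma}) near the edge and integrating the resulting blow-up $\mathrm{dist}(\cdot,\Delta_2)^{\beta-1}$ with $\beta>1/2$, but this is additional work you have not carried out.

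The paper avoids this difficulty by not fixing $\Omega$ in advance. From the Neumann Caccioppoli inequality $\int_{D_{3/2}}|\nabla u_\varep|^2\le C\{\int_{D_2}|u_\varep|^2+\int_{\Delta_2}|g|^2\}$ and the co-area formula, it \emph{selects} a $C^{1,\alpha}$ domain $\Omega$ with $D_1\subset\Omega\subset D_{3/2}$ for which $\int_{\partial\Omega}|\nabla u_\varep|^2$ is controlled by the same right-hand side. This uses only energy estimates---no pointwise gradient control at all---and feeds directly into $\|u_\varep-w\|_{L^2(\Omega)}\le C\,\eta(\varep)\,\|\nabla u_\varep\|_{L^2(\partial\Omega)}$ from Theorem~\ref{NP-rate-theorem-2}.
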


\begin{proof}
By rescaling we may assume $r=1$.
By subtracting a constant we may assume that $\int_{D_2} u_\varep =0$.
Using Cacciopoli's inequality
\begin{equation}\label{NP-C}
\int_{D_{3/2}} |\nabla u_\varep|^2
\le C \left\{ \int_{D_2} |u_\varep|^2 +\int_{\Delta_2} |g|^2 \right\}
\end{equation}
and the co-area formula,
it is not hard to see that there exists a $C^{1, \alpha}$ domain $\Omega$
such that $D_1\subset \Omega\subset D_{3/2}$ and
\begin{equation}\label{NP-Lip-8-4}
\int_{\partial\Omega} |\nabla u_\varep|^2
\le  C \left\{ \int_{D_2} |u_\varep|^2 +\int_{\Delta_2} |g|^2 \right\}.
\end{equation}
Now let $w$ be the weak solution of $\mathcal{L}_\varep (w)=0$ in $\Omega$
with $\frac{\partial w}{\partial \nu_0} =\frac{\partial u_\varep}{\partial\nu_\varep}$
on $\partial\Omega$ and $\int_\Omega w=\int_\Omega u_\varep$.
It follows from Theorem \ref{NP-rate-theorem-2}
\begin{equation}\label{NP-Lip-8-5}
\| u_\varep -w\|_{L^2(\Omega)}
\le C\, \eta (\varep) \| \nabla u_\varep\|_{L^2(\partial\Omega)}
\le C\, \eta (\varep)  \left\{ \| u_\varep\|_{L^2(D_2)} +\| g\|_{L^2(\Delta_2)} \right\},
\end{equation}
where we have used (\ref{NP-Lip-8-4}) for the last inequality.
Since $\int_{D_2} u_\varep =0$, this yields (\ref{NP-Lip-8-2}).
\end{proof}

\begin{theorem}\label{NP-Lip-theorem-8}
Suppose that $A$ satisfies the same conditions as in Theorem \ref{main-theorem-Lip-N}.
Let $u_\varep\in H^1(D_{2r}; \mathbb{R}^m)$ be a weak solution
of $\mathcal{L}_\varep (u_\varep)=0$ in $D_{2r}$ with $\frac{\partial u_\varep}{\partial\nu_\varep}
=g$ on $\Delta_{2r}$.
Then
\begin{equation}\label{NP-Lip-8-10}
\|\nabla u_\varep\|_{L^\infty(D_r)}
\le C\left\{ \left(\average_{D_{2r}} |\nabla u_\varep|^2 \right)^{1/2}
+\| g\|_{L^\infty(\Delta_{2r})}
+r^\beta \| g\|_{C^{0, \beta} (\Delta_{2r})} \right\},
\end{equation}
where $\beta=\alpha/2$ and $C$ depends only on $(\alpha, K_0)$ and $A$.
\end{theorem}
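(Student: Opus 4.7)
The plan is to mimic the argument used in Section~7 for the Dirichlet boundary Lipschitz estimate (Theorem~\ref{b-L-theoem-3}), replacing the Dirichlet ingredients by their Neumann counterparts: Lemma~\ref{Lip-N-lemma-1} in place of Lemma~\ref{b-L-lemma-1}, and Lemma~\ref{Lip-N-lemma-2} in place of Lemma~\ref{b-L-lemma-2}. By rescaling we may reduce to $r=1$. Introduce the Neumann analogue of the quantity $\Psi(t)$ attached to $u_\varep$ itself, namely
\begin{equation*}
H(t) = \frac{1}{t}\inf_{\substack{M\in \mathbb{R}^{m\times d}\\ q\in \mathbb{R}^m}}
\left\{ \left(\average_{D_t}|u_\varep - Mx - q|^2\right)^{1/2}
+ t\Bigl\|\tfrac{\partial}{\partial \nu_0}(u_\varep - Mx)\Bigr\|_{L^\infty(\Delta_t)}
+ t^{1+\beta}\Bigl\|\tfrac{\partial}{\partial \nu_0}(u_\varep - Mx)\Bigr\|_{C^{0,\beta}(\Delta_t)}\right\},
\end{equation*}
where $\beta = \alpha/2$. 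The first step is to show that $H$ decays geometrically down to the scale~$\varep$, up to controllable errors.

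For each $t \in (\varep, 1)$, invoke Lemma~\ref{Lip-N-lemma-2} on $D_{t}$ to obtain $w = w_t$ with $\mathcal{L}_0(w) = 0$ in $D_{t/2}$, $\partial w/\partial \nu_0 = \partial u_\varep/\partial \nu_\varep$ on $\Delta_{t/2}$, and the $L^2$ approximation
\begin{equation*}
\left(\average_{D_{t/2}} |u_\varep - w|^2\right)^{1/2}
\le C\,\eta(\varep/t)\left\{ \inf_{q\in \mathbb{R}^m}\left(\average_{D_t} |u_\varep - q|^2\right)^{1/2} + t\,\|g\|_{L^\infty(\Delta_t)}\right\}.
\end{equation*}
Writing $\Psi(s)$ for the quantity in Lemma~\ref{Lip-N-lemma-1} attached to $w$, the triangle inequality and Caccioppoli give
\begin{equation*}
H(\theta t) \le \Psi(\theta t) + \frac{C}{\theta t}\left(\average_{D_{\theta t}} |u_\varep - w|^2\right)^{1/2}
\le \tfrac12 \Psi(t) + \frac{C}{\theta t}\left(\average_{D_{\theta t}}|u_\varep - w|^2\right)^{1/2},
\end{equation*}
using (\ref{NP-Lip-8-1}). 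Since $w$ has the same Neumann data as $u_\varep$ on $\Delta_{t/2}$, $\Psi(t) \le H(t)$ plus the contribution of $\|u_\varep - w\|_{L^2(D_t)}/t$. Combining with Lemma~\ref{Lip-N-lemma-2} yields the core recursion
\begin{equation*}
H(\theta t) \le \tfrac12 H(t) + C\,\eta(\varep/t)\bigl\{ H(t/\theta) + |M_{t/\theta}| + \|g\|_{L^\infty(\Delta_1)} + \|g\|_{C^{0,\beta}(\Delta_1)}\bigr\},
\end{equation*}
for any $t \in (\varep, \theta)$, where $M_s$ denotes a matrix achieving the infimum in $H(s)$.

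To iterate, set $r_j = \theta^{j+1}$ up to $j = \ell$ with $\theta^{\ell+2} < \varep \le \theta^{\ell+1}$, and put $F_j = H(r_j)$, $p_j = |M_j|$, $K = \|g\|_{L^\infty(\Delta_1)} + \|g\|_{C^{0,\beta}(\Delta_1)}$. The bound $|M_{j+1} - M_j| \le C(F_j + F_{j+1})$ follows from comparing the two choices of affine function on $D_{r_{j+1}}$, using the interior cone property of $D_{r_{j+1}}$. This places $\{F_j\}$ and $\{p_j\}$ in the framework of Lemma~\ref{main-lemma-1} with $\eta_j := C\eta(\varep \theta^{-j-1})$, and Lemma~\ref{decay-rate-lemma-8} (which requires precisely $N>3$) ensures $\sum_j \eta_j < \infty$. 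Lemma~\ref{main-lemma-1} then produces
\begin{equation*}
F_j + p_j \le C\bigl\{ K + F_0 + F_1 + p_0\bigr\} \le C\left\{ \left(\average_{D_2}|\nabla u_\varep|^2\right)^{1/2} + \|g\|_{L^\infty(\Delta_1)} + \|g\|_{C^{0,\beta}(\Delta_1)}\right\}
\end{equation*}
for all $0 \le j \le \ell$. Unpacking the definition of $H$ and using Caccioppoli gives, for every $t \in (\varep, 1/4)$,
\begin{equation*}
\left(\average_{D_t}|\nabla u_\varep|^2\right)^{1/2} \le C\left\{ \left(\average_{D_2}|\nabla u_\varep|^2\right)^{1/2} + \|g\|_{L^\infty(\Delta_1)} + \|g\|_{C^{0,\beta}(\Delta_1)}\right\}.
\end{equation*}
Finally, for scales $t \le \varep$, the rescaled solution $v(x) = \varep^{-1} u_\varep(\varep x)$ solves $\mathcal{L}_1(v) = 0$ in $D_{2}$ with Neumann data $\varep g(\varep\, \cdot\,)$ on $\Delta_{2}$ in a straightened chart whose $C^{1,\alpha}$ character is uniform in $\varep$; by the classical boundary Lipschitz estimate for elliptic systems with Hölder coefficients we bound $\|\nabla v\|_{L^\infty(D_1)}$ by $L^2$ and $C^{0,\beta}$ data, and unscaling yields (\ref{NP-Lip-8-10}) after combining with the interior Lipschitz estimate from Section~4.

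The main obstacle I anticipate is verifying the one-step estimate $\Psi(t) \le H(t) + (\text{error from }\|u_\varep - w\|_{L^2(D_t)}/t)$ cleanly, because the Neumann data of $w - Mx$ coincides with that of $u_\varep - Mx$ on $\Delta_{t/2}$ only; one must carry out the comparison on a slightly smaller trace and absorb the mismatch into the same approximation error furnished by Lemma~\ref{Lip-N-lemma-2}. Once the recursion is written correctly the summability is automatic from $N > 3$, and the passage to microscopic scales is standard.
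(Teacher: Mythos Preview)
Your proposal follows essentially the same approach as the paper's proof: set up a flatness functional at each scale, use Lemma~\ref{Lip-N-lemma-1} for the homogenized comparison $w$, Lemma~\ref{Lip-N-lemma-2} for the approximation error, feed the resulting recursion into Lemma~\ref{main-lemma-1} with the Dini condition from Lemma~\ref{decay-rate-lemma-8}, and finish by blow-up at scales $\le \varep$ together with the interior Lipschitz estimate.

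There is one slip in your definition of $H(t)$ that is worth fixing, because it is the source of the ``main obstacle'' you flag at the end. The boundary terms in $H(t)$ should be
\[
t\Bigl\| g - \tfrac{\partial}{\partial\nu_0}(Mx)\Bigr\|_{L^\infty(\Delta_t)}
+ t^{1+\beta}\Bigl\| g - \tfrac{\partial}{\partial\nu_0}(Mx)\Bigr\|_{C^{0,\beta}(\Delta_t)},
\]
i.e.\ the \emph{given} Neumann data $g=\partial u_\varep/\partial\nu_\varep$ minus the $\nu_0$-conormal of the affine function, exactly as in the paper's $\Phi(t)$. Your version, $\partial_{\nu_0}(u_\varep - Mx)$, involves $\widehat{A}\nabla u_\varep$ pointwise on $\Delta_t$, which is not a priori in $L^\infty$ or $C^{0,\beta}$ (that is what the theorem is proving). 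Once you use $g$ instead, the boundary terms in $H$ and in $\Psi$ coincide exactly on $\Delta_t$ because Lemma~\ref{Lip-N-lemma-2} gives $\partial w/\partial\nu_0 = g$ there; hence $\Psi(t)\le H(t)+\tfrac{C}{t}\bigl(\average_{D_t}|u_\varep-w|^2\bigr)^{1/2}$ with no residual mismatch, and your anticipated obstacle disappears. The rest of your iteration is then identical to the paper's.
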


\begin{proof}
With Lemmas \ref{Lip-N-lemma-1},  \ref{decay-rate-lemma-8} and \ref{Lip-N-lemma-2} at our disposal,
the theorem follows by the same line of argument as in the case of Dirichlet condition.
By rescaling we may assume $r=1$.
Let
$$
\aligned
\Phi (t)= &
t^{-1}\inf_{\substack{ M\in \mathbb{R}^{m\times d}\\ q\in \mathbb{R}^m}}
\bigg\{ \left(\average_{D_t} |u_\varep -Mx -q|^2 \right)^{1/2}
+t \left\| g -\frac{\partial}{\partial \nu_0} \big( Mx \big)\right\|_{L^\infty(\Delta_t)} \\
&\qquad\qquad\qquad\qquad\qquad
+t^{1+\beta} \left\| g -\frac{\partial}{\partial\nu_0} \big( Mx \big) \right\|_{C^{0,\beta}
(\Delta_t)} \bigg\}
\endaligned
$$
for $0<t\le 1$.
For each $\varep<t\le 1$, let
$w=w_t$ be the solution of $\mathcal{L}_0(w)=0$ in $D_t$ with $\frac{\partial w}{\partial \nu_0}
=g$ on $\Delta_t$, given by Lemma \ref{Lip-N-lemma-2}.
As in the case of Dirichlet condition, it follows from Lemma \ref{Lip-N-lemma-1} that
$$
\Phi (\theta t) \le \frac12 \Phi (t) +\frac{C}{t} \left\{ \average_{D_t} |u_\varep -w|^2\right\}^{1/2},
$$
where $\theta \in (0,1/4)$ is given by Lemma \ref{Lip-N-lemma-1}.
In view of Lemma \ref{Lip-N-lemma-2}, this leads to
\begin{equation}\label{NP-Lip-8-20}
\Phi (\theta t) \le \frac12 \Phi (t)
+C \, \eta (\varep/t) \left\{ \frac{1}{t} \inf_{q\in \mathbb{R}^m}
\left(\average_{D_{2t}} |u_\varep -q|^2 \right)^{1/2} 
+\| g\|_{L^\infty(\Delta_{2t}) }\right\}.
\end{equation}

Now, let $r_j=\theta^{j+1}$ for $0\le j\le \ell$, where
$\ell$ is chosen so that $\theta^{\ell+1}<\varep\le \theta^{\ell +1}$.
Let
$$
F_j =\Phi (r_j) \quad \text{ and } \quad p_j =|M_j|,
$$
where $M_j\in \mathbb{R}^{m\times d}$ is a matrix such that
$$
\aligned
\Phi (r_j)= &
r^{-1}_j 
\bigg\{ \inf_{q\in \mathbb{R}^m}
\left(\average_{D_{r_j}} |u_\varep -M_j x -q|^2 \right)^{1/2}
+r_j \left\| g -\frac{\partial}{\partial\nu_0} \big( M_j x \big)\right\|_{L^\infty(\Delta_{r_j})} \\
&\qquad\qquad\qquad\qquad\qquad
+r_j^{1+\beta} \left\| g-\frac{\partial}{\partial \nu_0} \big( M_j x \big) \right\|_{C^{0,\beta}
(\Delta_{r_j})} \bigg\}.
\endaligned
$$
It follows from the estimate (\ref{NP-Lip-8-20}) that
\begin{equation}\label{NP-Lip-8-30}
F_{j+1} \le \frac12 F_j + C\, \eta (\varep 2^{-j-1}) \big\{ F_{j-1} + p_{j-1} \big\}.
\end{equation}
As in the proof of Theorem \ref{b-L-theoem-3}, we also have
\begin{equation}
p_{j+1} \le p_j +C \big\{ F_j +F_{j+1} \big\}.
\end{equation}
Furthermore, by Lemma \ref{decay-rate-lemma-8},
$$
\sum_{j=1}^\ell \eta (\varep \theta^{-j-1}) \le C \int_0^1 \eta (t)\, \frac{dt}{t} <\infty.
$$
Consequently, we may apply Lemma \ref{main-lemma-1} to obtain 
$$
\aligned
F_j + p_j  &\le C \big\{ p_0 +F_0 +F_1\big\}\\
&\le C \left\{ \left(\average_{D_1} |u_\varep|^2\right)^{1/2} + \| g\|_{C^\beta (\Delta_1)} \right\}.
\endaligned
$$
This, together with the Cacciopoli's inequality, yields that for any $\varep<t<(1/4)$,
\begin{equation}\label{NP-Lip-8-40}
\left\{ \average_{D_t} |\nabla u_\varep|^2 \right\}^{1/2}
\le C \left\{ \left(\average_{D_1} |u_\varep|^2\right)^{1/2} + \| g\|_{C^\beta (\Delta_1)} \right\}.
\end{equation}
As in the case of Dirichlet condition,
we may use a blow-up argument and (\ref{NP-Lip-8-40}) to show that the estimate above in fact
holds for any $0<t<(1/4)$.
Finally, we observe that the estimate (\ref{NP-Lip-8-10}) follows from (\ref{NP-Lip-8-40})
and the interior Lipschitz estimates.
\end{proof}

\begin{remark}\label{remark-NP}
{\rm 
Let $\Omega$ be a bounded $C^{1,\alpha}$ domain in $\mathbb{R}^d$.
Let $N_\varep(x,y)$ denote the matrix of Neumann functions for $\mathcal{L}_\varep$
in $\Omega$, with pole at $y$; i.e., 
$$
\left\{
\aligned
\mathcal{L}_\varep \big\{ N_\varep (\cdot, y) \big\} & = I_{m\times m} \delta_ y(x) &\quad &\text{ in } \Omega,\\
\frac{\partial}{\partial \nu_\varep} \big\{ N_\varep (\cdot, y) \big\} 
& =-|\partial\Omega|^{-1} I_{m\times m} &\quad & \text{ on } \partial\Omega,
\endaligned
\right.
$$
where $I_{m\times m}$ denotes the $m\times m$ identity matrix.
Suppose that $A$ satisfies the conditions in Theorem \ref{main-theorem-Lip-N}.
Since $A^*$ also satisfies the same conditions, it follows from Theorem \ref{NP-Lip-theorem-8}
that if $d\ge 3$,
\begin{equation}\label{N-F-estimate}
\left\{
\aligned
|N_\varep (x,y)| & \le C\, |x-y|^{2-d},\\
|\nabla_x N_\varep (x,y)| +|\nabla_y N_\varep (x,y)| & \le C\, |x-y|^{1-d},\\
|\nabla_x\nabla_y N_\varep (x,y)| & \le C\, |x-y|^{-d}
\endaligned
\right.
\end{equation}
for any $x,y\in \Omega$, $x\neq y$, where $C$ depends only on $A$ and $\Omega$.
We refer the reader to \cite{KLS1} for the proof in the periodic setting.
}
\end{remark}
We now give the proof of Theorem \ref{main-theorem-Lip-N}

\begin{proof}[\bf Proof of Theorem \ref{main-theorem-Lip-N}]
It suffices to show that if $\mathcal{L}(u_\varep) =F$ in $D_{2r}$ and
$\frac{\partial u_\varep}{\partial \nu_\varep} =g$ on $\partial\Omega$ for some
$0<r<1$, then
\begin{equation}\label{NP-Lip-8-100}
\aligned
\|\nabla u_\varep\|_{L^\infty(D_r)}
 &\le C\left(\average_{D_{2r}} |\nabla u_\varep|^2\right)^{1/2}
 + C \, \| g\|_{L^\infty(\Delta_{2r})} + C\, r^\beta \|g\|_{C^{0, \beta}(\Delta_{2r})}\\
 &\qquad\qquad\qquad\qquad
 +C r^\beta \sup_{\substack{x\in D_{2r}\\ 0<t<r }} t^{1-\beta}
 \average_{B(x,t)\cap D_{2r}} |F|.
 \endaligned
 \end{equation}
By rescaling we may assume $r=1$.
The case $F=0$ is given by Theorem \ref{NP-Lip-theorem-8}.
To deal with the general case, we assume $d\ge 3$ (the case $d=2$ is reduced to the case $d=3$ by
adding a dummy variable).
Let $\Omega$ be a bounded $C^{1,\alpha}$ domain such that $D_{3/2}\subset \Omega\subset D_2$.
Let $N_\varep (x,y)$ denote the matrix of Neumann functions for $\mathcal{L}$ in 
$\Omega$, with pole at $y$.
Let $v_\varep (x)=\int_\Omega N_\varep (x,y) F(y)\, dy$.
Note that by (\ref{N-F-estimate}),
$$
|\nabla v_\varep (x)|\le C\int_\Omega \frac{|F(y)|}{|x-y|^{d-1}}\, dy
\le C  \sup_{\substack{x\in D_{2}\\ 0<t<1 }} t^{1-\sigma}
 \average_{B(x,t)\cap D_{2}} |F|.
 $$
By considering $u_\varep -v_\varep$, 
we may reduce the general case to the case $F=0$.
\end{proof}

%%%%%%%%%%%%%%%%%%%%%%%%%%%%%%%%%%%%%%%%%%%%%%%%%

%%%%%%%%%%%%%%%%%%%%%%%%%%%%%%%%%%%%%%%%%%%%%%%%%%%

\bibliography{as.bib}

\medskip

\begin{flushleft}

Scott N. Armstrong

Ceremade (UMR CNRS 7534)

Universit\'e Paris-Dauphine

Paris, France

\end{flushleft}

\begin{flushleft}
Zhongwei Shen

 Department of Mathematics
 
University of Kentucky

Lexington, Kentucky 40506,
USA. 

% Fax: 1-859-257-4078.

E-mail: zshen2@uky.edu
\end{flushleft}

\medskip

\noindent \today

\end{document}